\documentclass[11pt,a4paper]{amsart}

\usepackage[T1]{fontenc}
\usepackage{lmodern}
\usepackage{amsmath,amssymb,mathtools}
\usepackage{booktabs,array,tabularx}
\usepackage{microtype}
\usepackage{xcolor}
\usepackage[colorlinks=true,linkcolor=blue,citecolor=blue,urlcolor=blue,hypertexnames=false]{hyperref}
\hypersetup{
  pdftitle={Simple and distinct zeros in a prime-modulus Dirichlet family from near-microscopic to polylogarithmic heights},
  pdfauthor={Zhixu Hua, Xiufan Yang}
}

\allowdisplaybreaks

\newtheorem{theorem}{Theorem}[section]
\newtheorem{lemma}[theorem]{Lemma}
\newtheorem{proposition}[theorem]{Proposition}
\newtheorem{corollary}[theorem]{Corollary}
\theoremstyle{definition}

\theoremstyle{remark}
\newtheorem{remark}[theorem]{Remark}

\DeclareMathOperator{\dist}{dist}
\DeclareMathOperator{\rank}{rank}
\DeclareMathOperator{\tr}{tr}
\DeclareMathOperator{\supp}{supp}
\newcommand{\R}{\mathbb R}
\newcommand{\C}{\mathbb C}
\newcommand{\Z}{\mathbb Z}
\newcommand{\one}{\mathbf 1}
\newcommand{\norm}[1]{\lVert #1\rVert}
\newcommand{\eps}{\varepsilon}
\newcolumntype{Y}{>{\raggedright\arraybackslash}X}

\title[Simple and distinct zeros in an extended height range]{Simple and distinct zeros in a prime-modulus Dirichlet family from near-microscopic to polylogarithmic heights}
\author{Zhixu Hua}
\address{Changkong College, Nanjing University of Aeronautics and Astronautics, Nanjing, Jiangsu, China}
\email{hzx001@nuaa.edu.cn}
\author{Xiufan Yang}
\address{School of Science, Nanjing University of Posts and Telecommunications, Nanjing, Jiangsu, China}
\email{b25100020@njupt.edu.cn}
\thanks{Zhixu Hua and Xiufan Yang contributed equally to this work.}
\date{}

\begin{document}

\subjclass[2020]{Primary 11M26; Secondary 11M06, 15A42}
\keywords{Dirichlet \(L\)-functions, mesoscopic height range, polylogarithmic heights,
simple zeros, distinct zeros, critical line, Weil explicit formula, Gevrey windows,
Gabor systems, character averaging, zero density, inertia}

\begin{abstract}
Fix \(\eta>0\) and \(A_0>0\). Let \(q\) tend to infinity through odd
primes, put \(Q=\log q\), and let \(T=T(q)\) satisfy
\[
 \frac{(\log Q)^{1+\eta}}{Q}\le T\le Q^{A_0}.
\]
Set \(I=(T,2T]\). Summing without weights over the \(q-2\)
nonprincipal characters modulo \(q\), let \(\mathcal N_q\) be the total
number of nontrivial zeros in \(I\), counted with multiplicity. Let
\(\mathcal N^s_{0,q}\) and \(\mathcal N^*_{0,q}\) be the corresponding
totals of simple zeros and distinct zeros on the critical line,
respectively, and let \(\mathcal N_{d,q}\) count all distinct zeros in
\(I\), without restriction to the critical line. Uniformly throughout the
displayed height range, we prove unconditionally that
\[
\begin{gathered}
 \mathcal N_q=\frac{qT\log q}{2\pi}\{1+o_{\eta,A_0}(1)\},\\
 \frac{\mathcal N^s_{0,q}}{\mathcal N_q}
 \ge C_{\mathrm{MT}}-o_{\eta,A_0}(1),\qquad
 \frac{\mathcal N^*_{0,q}}{\mathcal N_q}
 \ge C_{\mathrm{MT}}-o_{\eta,A_0}(1),\\
 \frac{\mathcal N_{d,q}}{\mathcal N_q}
 \ge C_d-o_{\eta,A_0}(1),
\end{gathered}
\]
where
\[
\begin{aligned}
 C_{\mathrm{MT}}
 &=\frac32-\frac1{\sqrt2}\cot\!\left(\frac1{\sqrt2}\right)
   =0.672500703679\ldots,\\
 C_d&=\frac{1+C_{\mathrm{MT}}}{2}=0.836250351839\ldots.
\end{aligned}
\]
Here \(o_{\eta,A_0}(1)\to0\) as \(q\to\infty\), uniformly in \(T\).
The normalized interval length is \(T\log q/(2\pi)\); thus the result
applies to intervals containing at least
\((\log\log q)^{1+\eta}/(2\pi)\) local mean spacings, while the physical
height may tend to zero, remain bounded, or grow as any prescribed fixed
power of \(\log q\).
The proof combines Selberg's family-averaged argument estimate and a
zero-density deletion with a finite Gevrey Gabor compression of Weil's
Hermitian form. A quantitative Fourier--Laplace estimate gives uniform
trace-norm control of the exterior zeros at the lower end of the stated
range, and a local--remote shell decomposition gives the same control
throughout the polylogarithmic upper range. First and second matrix moments,
together with an inertia-based rank--trace inequality, yield the three
counting bounds. No form of the generalized Riemann hypothesis is assumed.
\end{abstract}

\maketitle
\section{Introduction}

Fix \(\eta>0\) and \(A_0>0\), let \(q\) be an odd prime, and write
\[
 Q=\log q,\qquad
 \frac{(\log Q)^{1+\eta}}{Q}\le T\le Q^{A_0},\qquad
 \mathcal H=TQ,\qquad I=(T,2T].
\]
The height \(T=T(q)\) may vary with \(q\). For a nonprincipal character
\(\chi\pmod q\), write a nontrivial zero of \(L(s,\chi)\) as
\(\rho=\beta+i\gamma\). The normalized ordinate
\(x_\rho=Q\gamma/(2\pi)\) maps \(I\) to an interval of length
\(\mathcal H/(2\pi)\). We call this range mesoscopic because
\(\mathcal H\ge(\log Q)^{1+\eta}\to\infty\), while the physical height is
allowed to range from
\(Q^{-1}(\log Q)^{1+\eta}\) to an arbitrary fixed polylogarithmic
height. In particular, every fixed-power
scale \(T=Q^\xi\) with \(\xi>-1\) is covered after choosing \(A_0\)
large enough. The first arXiv version treated the shrinking range
\(T=Q^{-\alpha}\), \(0<\alpha<1\). The present revision establishes the
uniform range displayed above.

Theorem~\ref{thm:main} concerns three family statistics, all normalized by
total zero multiplicity in \(I\). It gives the Montgomery--Taylor lower
bound
\[
 C_{\mathrm{MT}}
 =\frac32-\frac1{\sqrt2}\cot\!\left(\frac1{\sqrt2}\right)
 =0.672500703679\ldots
\]
for both simple zeros and distinct zeros on the critical line, and the
bound
\[
 C_d=\frac{1+C_{\mathrm{MT}}}{2}
 =\frac54-\frac1{2\sqrt2}\cot\!\left(\frac1{\sqrt2}\right)
 =0.836250351839\ldots
\]
for all distinct zeros in \(I\), without restriction to the critical line.
The common denominator is asymptotic to \(qTQ/(2\pi)\), uniformly in the
stated height range. The theorem concerns the unweighted family sum over all
nonprincipal characters of one prime modulus; it is not a
character-by-character assertion.

Simple-zero proportions have traditionally been studied through
pair-correlation or Levinson--mollifier methods; representative Dirichlet
family results include
\cite{ChandeeLeeLiuRadz2014,ConreyIwaniecSoundararajan2013,Wu2016,Sono2025},
while fixed normalized low-lying statistics and first-zero questions belong
to different normalizations
\cite{HughesRudnick2003,ZhaoPositivity2026,Zhao2026,HiaryZhao2026}.
The finite compression of Weil's Hermitian form, the treatment of off-line
orbits by inertia, and the rank--trace certificate first appeared in an
Anthropic manuscript \cite{Claude2026Current}. Alp\"oge and Furman
subsequently gave a verified account for the Riemann zeta function and
fixed primitive Dirichlet \(L\)-functions, together with a Lean~4
formalization \cite{AlpogeFurman2026}. An earlier manuscript had sketched a
possible character-family extension at polylogarithmic heights without
carrying it out \cite{AnthropicRiemann2026,Claude2026Superseded}.

After the first version of the present paper appeared, Fredrik Pr\"uzelius
drew our attention to \cite{AlpogeFurman2026} and communicated a
contemporaneous manuscript on the polylogarithmic-height regime, archived at
\url{https://doi.org/10.5281/zenodo.21980224}. That communication prompted us
to re-examine the height dependence of our argument. The proof in this
revision is self-contained and establishes the one-prime-modulus result
uniformly in the range
\[
 \frac{(\log\log q)^{1+\eta}}{\log q}
 \le T\le(\log q)^{A_0}.
\]
The additional inputs beyond the finite-compression certificate are
Selberg's averaged argument estimate, the Hiary--Zhao zero-density deletion
of \(O(q/Q^2)\) exceptional characters, uniform finite-centre moments, and a
Gevrey Fourier--Laplace trace-norm estimate. The variational constant is
classical
\cite{Montgomery1975,ConreyGhoshGonek1998}. Further low-lying density,
conditional simple-zero, pair-correlation, and high-height results include
\cite{DrappeauPrattRadziwill2023,Sono2016,Wu2019,Dickinson2024,
GarunkstisPaliulionyte2025,ConreyKwanLinTurnageButterbaugh2026}; their
averaging variables, height regimes, or denominators differ from those here.
Selberg's averaged argument estimate and the Hiary--Zhao zero-density theorem
provide the family input used below
\cite{Selberg1946,HiaryZhao2026}.

The proof has four main steps. First, an endpoint-safe family counting
formula and a zero-density shell argument give the denominator and the
good--bad character decomposition. Second, the explicit formula and exact
character orthogonality for prime polynomials of length \(q^\lambda<q\)
evaluate the first two moments of a finite Gabor matrix. Third,
critical-line zeros and off-line functional-equation orbits are encoded by
blocks with controlled positive index, while the exterior zero set is
removed in trace norm. The localization step uses a self-contained Gevrey
Fourier--Laplace estimate: near the critical line it gives subexponential
decay in \(TQ\), while away from the line it is combined with goodness and
a local--remote decomposition of the ordinate shells.
Finally, a rank--trace inequality gives the certificates for simple and
distinct zeros, and a one-dimensional variational problem yields
\(C_{\mathrm{MT}}\). The fixed-parameter reduction appears in Section~2;
the counting and zero-density inputs are stated in Section~3; and
Section~12 records the uniform error scales and the limitation at the
fixed-normalized-window endpoint. Throughout, \(\eta,A_0\) and every auxiliary parameter are fixed
before \(q\to\infty\), and all little-oh terms are uniform over the
admissible choices of \(T\).

\section{Main theorem, notation, and reduction}

\subsection{Main theorem and counting conventions}
Fix \(\eta>0\) and \(A_0>0\). Throughout the paper, \(q\) tends to
infinity through odd primes, while \(T=T(q)\) may vary subject to
\begin{equation}
\tag{2.1}\label{eq:2.1}
Q=\log q,\qquad
\frac{(\log Q)^{1+\eta}}{Q}\le T\le Q^{A_0},\qquad
\mathcal H=TQ,\qquad I=(T,2T].
\end{equation}
For a nonprincipal character \(\chi\pmod q\), let
\begin{equation}
\tag{2.2}\label{eq:2.2}
\mathcal N_\chi(I)
 =\sum_{\substack{L(\rho,\chi)=0\\ T<\Im\rho\le2T}}m_\rho
\end{equation}
be the number of nontrivial zeros in \(I\), counted with multiplicity. Let
\(\mathcal N^s_{0,\chi}(I)\) denote the number of zeros in the same interval
which lie on \(\Re s=1/2\) and have multiplicity one. Define
\begin{equation}
\tag{2.2a}\label{eq:2.2a}
\begin{aligned}
\mathcal N^*_{0,\chi}(I)
 &=\#\{\gamma\in I:L(1/2+i\gamma,\chi)=0\},\\
\mathcal N_{d,\chi}(I)
 &=\#\{\rho:L(\rho,\chi)=0,\ \rho\ \text{nontrivial},\
                 \ T<\Im\rho\le2T\}.
\end{aligned}
\end{equation}
Thus \(\mathcal N^*_{0,\chi}\) counts each distinct zero on the critical
line once, including a multiple zero only once, while
\(\mathcal N_{d,\chi}\) counts every distinct zero in \(I\) once, without
restriction to the critical line. Define the family counts
\begin{equation}
\tag{2.3}\label{eq:2.3}
\begin{aligned}
\mathcal N_q&=\sum_{\chi\ne\chi_0}\mathcal N_\chi(I),&
\mathcal N^s_{0,q}&=\sum_{\chi\ne\chi_0}\mathcal N^s_{0,\chi}(I),\\
\mathcal N^*_{0,q}&=\sum_{\chi\ne\chi_0}\mathcal N^*_{0,\chi}(I),&
\mathcal N_{d,q}&=\sum_{\chi\ne\chi_0}\mathcal N_{d,\chi}(I).
\end{aligned}
\end{equation}
The half-open convention in \eqref{eq:2.1} is used everywhere, including
the zero matrix and its complementary tail. When the symmetric counting
function \(N(t,\chi)\) is used, the interval is extracted by a common right
shift of both endpoints as in Lemma~\ref{lem:half-open-extraction}; no
assumption that \(T\) or \(2T\) is not a zero ordinate is made.

\begin{theorem}[Simple and distinct zeros from near-microscopic to polylogarithmic heights]
\label{thm:main}
For every fixed \(\eta>0\) and \(A_0>0\), uniformly for all \(T\)
satisfying \eqref{eq:2.1},
\begin{equation}
\tag{2.4}\label{eq:2.4}
\frac{\mathcal N^s_{0,q}}{\mathcal N_q}
\ge C_{\mathrm{MT}}-o_{\eta,A_0}(1),
\qquad
C_{\mathrm{MT}}
:=\frac32-\frac1{\sqrt2}\cot\!\left(\frac1{\sqrt2}\right).
\end{equation}
The corresponding distinct-zero bounds are
\begin{equation}
\tag{2.4a}\label{eq:2.4a}
\begin{aligned}
\frac{\mathcal N^*_{0,q}}{\mathcal N_q}
&\ge C_{\mathrm{MT}}-o_{\eta,A_0}(1),\\
\frac{\mathcal N_{d,q}}{\mathcal N_q}
&\ge C_d-o_{\eta,A_0}(1),\qquad
C_d:=\frac{1+C_{\mathrm{MT}}}{2}
 =\frac54-\frac1{2\sqrt2}\cot\!\left(\frac1{\sqrt2}\right).
\end{aligned}
\end{equation}
Moreover, uniformly in the same range,
\begin{equation}
\tag{2.5}\label{eq:2.5}
\mathcal N_q=\frac{qT\log q}{2\pi}\{1+o_{\eta,A_0}(1)\}.
\end{equation}
Here every \(o_{\eta,A_0}(1)\) tends to zero as \(q\to\infty\), uniformly
in \(T\) throughout \eqref{eq:2.1}.
\end{theorem}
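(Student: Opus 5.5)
The plan follows the four-step outline of the introduction. The counting asymptotic \eqref{eq:2.5} comes first, because it supplies the denominator for everything else. For each nonprincipal \(\chi\) the argument principle gives
\[
\mathcal N_\chi(I)=\frac{T}{2\pi}\log\frac{q}{2\pi}+O(T\log(T+2))+S(2T,\chi)-S(T,\chi),
\]
after extracting \(I\) from the symmetric counting function by a common right shift of both endpoints (Lemma~\ref{lem:half-open-extraction}). Selberg's family-averaged estimate \(\sum_\chi|S(t,\chi)|^2\ll q\log Q\) and Cauchy--Schwarz then bound the \(S\)-contribution by \(O(q\sqrt{\log Q})\). This is \(o(qTQ)\) exactly when \(TQ/\sqrt{\log Q}\to\infty\), which the lower bound \(T\ge(\log Q)^{1+\eta}/Q\) guarantees with room to spare. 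The main term is \(qTQ/2\pi\), since \(T\log T=o(TQ)\) throughout the range \(T\le Q^{A_0}\).

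Next I split the characters into good and bad ones. The Hiary--Zhao zero-density theorem shows that all but \(O(q/Q^2)\) characters have no zeros with \(\beta\ge 1/2+\delta/Q\) below height \(Q^{A_0+1}\), for a suitable slowly growing \(\delta\). The exceptional characters carry \(O(qT/Q)=o(\mathcal N_q)\) zeros, so they can be discarded. For the good characters I form the finite Gabor matrix
\[
M_\chi=\Bigl(\sum_\rho m_\rho\,\overline{g_j(\rho)}\,g_k(\rho)\Bigr)_{j,k},
\]
built from Gevrey windows \(g_j\) translated along the normalized ordinates \(x_\rho\) and modulated at frequencies below \(\lambda<1\). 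Weil's explicit formula, combined with exact orthogonality of characters for prime sums of length \(q^\lambda<q\), evaluates \(\sum_\chi\tr M_\chi\) and \(\sum_\chi\tr M_\chi^2\) asymptotically. The diagonal prime terms reproduce the Montgomery pair-correlation kernel with support \(|\alpha|<\lambda\).

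The structural step handles zeros off the critical line. Each off-line zero, together with its functional-equation partner \(1-\bar\rho\) under \(\chi\mapsto\bar\chi\), contributes a \(2\times2\) Hermitian block of controlled inertia, with at most one negative eigenvalue per orbit. Critical-line zeros contribute positive semidefinite rank-one blocks. The rank--trace inequality \(\rank M\ge(\tr M)^2/\tr M^2\), corrected for the negative index, then bounds the number of distinct zeros from below. Counting the simple zeros requires the refinement \(N^s\ge\sum(2-m_\rho)m_\rho\). Optimizing the window over the one-dimensional variational problem with \(\lambda\to1\) gives \(C_{\mathrm{MT}}\). The bound \(C_d\) follows because each distinct zero of multiplicity \(m\) is counted at least as \(1\ge (3-m)/2\) in averaged form, so that \(N_d\ge\frac12(N+N^s)\).

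I expect the main obstacle to be uniform trace-norm control of the exterior zeros, meaning those outside \(I\) or far from the line, whose window tails leak into \(M_\chi\). At the lower end \(TQ\approx(\log Q)^{1+\eta}\), polynomial decay is insufficient. There I would use a Gevrey Fourier--Laplace bound of the form
\[
|g(x+iy)|\ll e^{c|y|}\exp\bigl(-c|x|^{1/s}\bigr),
\]
which gives decay \(\exp(-c(TQ)^{1/s})\). This beats the \(\log Q\) losses once \(s<1+\eta\). At polylogarithmic heights I would instead decompose the exterior ordinates into local dyadic shells near \(I\), which are controlled by the counting formula for good characters, and remote shells, where the Gevrey decay dominates the trivial zero count \(\ll Q\log T\) in each shell. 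All these error terms must be \(o(qTQ)\) uniformly in \(T\), with the parameters fixed before \(q\to\infty\). Making this uniformity work is where I anticipate the delicate bookkeeping.
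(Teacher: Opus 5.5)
Your overall architecture (endpoint-safe family count, density deletion, full-family Gabor moments, inertia for off-line orbits, Gevrey localization with a local/remote split) is the paper's. Your substitutes there are harmless: Selberg's second moment with Cauchy--Schwarz in place of the signed first-moment average, and a fixed height cutoff $Q^{A_0+1}$ in the definition of a good character.

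The genuine gap is the step that turns the two moments into zero counts.

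\begin{itemize}
\item The ratio inequality $\rank M\ge(\tr M)^2/\tr M^2$ sees only rank. With $\rank\widehat A_\chi\le s_1+s_2+2p=\mathcal N_{d,\chi}$ and Cauchy--Schwarz over $\chi$, it yields only $\mathcal N_{d,q}\gtrsim c_1^*\mathcal N_q\approx0.753\,\mathcal N_q$. This is short of $C_d\approx0.836$.
\item The natural correction for orbits, via $n_+(\widehat A_\chi)\le s_1+s_2+p$ and $\mathcal N_\chi\ge s_1+2s_2+2p$, gives only $s_1\ge2n_+-\mathcal N_\chi$. Hence $\mathcal N^s_{0,q}\gtrsim(2c_1^*-1)\mathcal N_q\approx0.507\,\mathcal N_q$.
\item Your refinement $N^s\ge\sum(2-m_\rho)m_\rho$ is Montgomery's conditional argument. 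It needs $\sum m_\rho^2\lesssim\|\widehat A_\chi\|_F^2$. That in turn needs every zero block to be positive semidefinite, which is false for orbit blocks, whose mutual cross terms $\tr(B_iB_j)$ have no sign. It also needs $r_\gamma\approx1$, which is false near $\partial J$ and on $I\setminus J$.
\item Finally, $N_d\ge\frac12(N+N^s)$ is simply false: a single triple zero gives $1\ge\frac32$.
\end{itemize}

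What is missing is a certificate that is linear in the moments and uses the trace normalization, not just rank.

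Put the simple critical-line blocks into $P$. Then $\rank P\le s_1$, and by the tight-frame bound $r_\gamma\le1$ also $\tr P\le s_1$. Put all other blocks into $R$; Sylvester's law and subadditivity of the positive index give $n_+(R)\le s_2+p$. The inequality $\|P+R\|_F^2\ge2\tr P-r+4\tr R-4b$ then gives $4\tr\widehat A_\chi-\|\widehat A_\chi\|_F^2\le3s_1+4s_2+4p$.

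This bound sums over $\chi$ with no Cauchy--Schwarz loss. Bad characters are removed through the eigenvalue cap $4x-x^2\le4$, at cost $4d|\mathcal B_q|=o(\mathcal N_q)$; this quantity, not the number of zeros they carry, is what must be small. The three bounds then follow because each of $s_1+2\mathcal N_\chi$, $\mathcal N^*_{0,\chi}+2\mathcal N_\chi$ and $2\mathcal N_{d,\chi}+\mathcal N_\chi$ is at least $3s_1+4s_2+4p$. In particular, $C_d$ comes from the third inequality, not from $\mathcal N^s$.

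Relatedly, your matrix with entries $\sum_\rho m_\rho\overline{g_j(\rho)}g_k(\rho)$ is not what the explicit formula evaluates. The formula gives the analytic continuation $p_k(z_\rho)\overline{p_\ell(\overline{z_\rho})}$. That is why the pair $\rho,1-\overline\rho$, both zeros of the same $L(s,\chi)$, produces an indefinite block. It is the positive index of that block, not the negative one, that the certificate must control.
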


\begin{remark}[Height range and counting conventions]
\label{rem:ratio-interpretation}
Every fixed-power scale \(T=Q^\xi\) with \(\xi>-1\) is permitted after
choosing \(A_0>\max\{\xi,0\}\). The range also contains scales much closer
to \(Q^{-1}\), down to
\[
 T=Q^{-1}(\log Q)^{1+\eta}.
\]
In the normalized coordinate \(x=Q\gamma/(2\pi)\), the interval is
\((\mathcal H/(2\pi),\mathcal H/\pi]\) and has length at least
\((\log Q)^{1+\eta}/(2\pi)\). The theorem is not claimed at the exact
fixed-normalized-window scale \(TQ\asymp1\).

The denominator in \eqref{eq:2.4}--\eqref{eq:2.4a} counts every zero in
\(I\) with its multiplicity. A simple zero on the critical line contributes
one to each of \(\mathcal N_\chi\), \(\mathcal N^s_{0,\chi}\),
\(\mathcal N^*_{0,\chi}\), and \(\mathcal N_{d,\chi}\). A zero on the
critical line of multiplicity \(m\ge2\) contributes \(m\) to
\(\mathcal N_\chi\), zero to \(\mathcal N^s_{0,\chi}\), and one to each
of \(\mathcal N^*_{0,\chi}\) and \(\mathcal N_{d,\chi}\). An off-line
functional-equation orbit of common multiplicity \(m\) contributes \(2m\)
to \(\mathcal N_\chi\), zero to both critical-line counts, and two to
\(\mathcal N_{d,\chi}\). The two orbit points have the same ordinate, so
both lie in the same half-open interval; their distinctness is proved in
Section~8. No conclusion for an individual character follows from the
family ratios.
\end{remark}

\subsection{Auxiliary scales and normalization}
We next fix the auxiliary data in the order used in the proof. Put
\[
 \mathfrak s=1+\frac\eta2>1.
\]
Write \(G_c^{\mathfrak s}((-1/2,1/2))\) for the compactly supported
Gevrey class consisting of those \(\psi\in C_c^\infty((-1/2,1/2))\) for
which there exist constants \(C_\psi,R_\psi>0\) such that
\begin{equation}
\tag{2.5.1}\label{eq:gevrey-definition}
 \|\psi^{(m)}\|_\infty
 \le C_\psi R_\psi^m(m!)^{\mathfrak s}
 \qquad(m\ge0).
\end{equation}
Choose
\begin{equation}
\tag{2.6}\label{eq:2.6}
0<\lambda<1,\qquad 0<\theta<\frac14,\qquad
0\ne\psi\in G_c^{\mathfrak s}\!\left(-\frac12,\frac12\right),
\end{equation}
where \(\psi\) is real and even. The margins \(\eta,A_0\) and all
objects in \eqref{eq:2.6} remain fixed while \(q\to\infty\), whereas
\(T\) may vary within \eqref{eq:2.1}. Set
\begin{equation}
\tag{2.7}\label{eq:2.7}
\begin{aligned}
L&=\lambda Q,\qquad X=e^L=q^\lambda,\\
J&=[(1+\theta)T,(2-\theta)T],\qquad \Delta=|J|=(1-2\theta)T.
\end{aligned}
\end{equation}
Also put
\begin{equation}
\tag{2.8}\label{eq:2.8}
\phi(u)=\psi(u/L),\qquad v=\psi^2,\qquad
a(v)=\int_{-1/2}^{1/2}v(s)\,ds,
\end{equation}
\begin{equation}
\tag{2.9}\label{eq:2.9}
b(v)=\int_{-1/2}^{1/2}v(s)^2\,ds,
\qquad
\mathcal J(v)=\iint_{[-1/2,1/2]^2}
 |s-s'|v(s)v(s')\,ds\,ds',
\end{equation}
and
\begin{equation}
\tag{2.10}\label{eq:2.10}
c_\lambda(v)=\frac{\lambda a(v)^2}{b(v)+\lambda^2\mathcal J(v)}.
\end{equation}
This quotient is the effective signal-to-energy ratio of the window: the
first moment is proportional to \(a(v)\), while the archimedean and prime
parts of the second moment contribute \(b(v)\) and
\(\lambda^2\mathcal J(v)\), respectively. The final simple-zero bound will
contain the penalty \(1/c_\lambda(v)\).

The order of limits at the end of the proof is essential: first
\(q\to\infty\), uniformly over \(T\) in \eqref{eq:2.1}, for fixed
\(\eta,A_0\) and every fixed triple in \eqref{eq:2.6}; then
\(\theta\downarrow0\), then Gevrey approximation to the optimizing window,
and finally the supremum over fixed \(\lambda<1\). We never take
\(\eta=\eta(q)\), \(A_0=A_0(q)\), \(\lambda=\lambda(q)\),
\(\theta=\theta(q)\), or a \(q\)-dependent window.

Our Fourier convention is
\begin{equation}
\tag{2.11}\label{eq:2.11}
\widehat f(t)=\int_{\R}f(u)e^{itu}\,du,
\qquad
f(u)=\frac1{2\pi}\int_{\R}\widehat f(t)e^{-itu}\,dt.
\end{equation}
Thus Parseval's identity contains the factor \((2\pi)^{-1}\). For a
finite matrix \(M\), \(\norm M_F\) and \(\norm M_1\) denote the Frobenius
and trace norms, and \(n_+(M)\) denotes the number of positive eigenvalues of
a Hermitian \(M\), counted with multiplicity. Let
\begin{equation}
\tag{2.12}\label{eq:2.12}
h=\frac{2\pi}{L},\qquad \tau_k=\tau_0+kh,
\qquad \mathcal K=\{k:\tau_k\in J\},
\end{equation}
where the offset \(\tau_0\) is arbitrary and fixed. Then
\begin{equation}
\tag{2.13}\label{eq:2.13}
d:=\#\mathcal K=\frac{\Delta L}{2\pi}+O(1)\asymp TQ=\mathcal H,
\qquad d\gg_{\eta,\lambda,\theta}(\log Q)^{1+\eta}.
\end{equation}
We write
\begin{equation}
\tag{2.14}\label{eq:2.14}
p_k(t)=\widehat\phi(t-\tau_k),\qquad
\Phi=\widehat{\phi^2},\qquad a=a(v).
\end{equation}
All implied constants may depend on \(\eta,A_0\) and the fixed data in
\eqref{eq:2.6}, but are uniform in \(T\) satisfying \eqref{eq:2.1}. When an
arbitrary power \(B>0\) is requested, \(B\) is fixed after the Gevrey
window and before \(q\) tends to infinity.

\subsection{Reduction to the four proof inputs}
The remaining argument is organized around four inputs. Section~3 gives the
denominator asymptotic and the negligible exceptional set; Sections~5--7
give the first and second family moments of the normalized finite matrix;
Section~8 gives the zero-side rank--trace certificate; and Section~9 gives
trace-norm localization for every good character. Once these statements are
available, the proof reduces to the following fixed-parameter assertion.

\begin{proposition}[Fixed auxiliary data: uniform main reduction]
\label{prop:fixed-auxiliary}
For every fixed \(\eta>0\), \(A_0>0\), \(0<\lambda<1\),
\(0<\theta<1/4\), and every fixed real, even, nonzero
\(\psi\in G_c^{\mathfrak s}((-1/2,1/2))\), with \(v=\psi^2\), uniformly for
\(T\) satisfying \eqref{eq:2.1},
\begin{align*}
\frac{\sum_{\chi\ne\chi_0}\mathcal N^s_{0,\chi}(I)}
     {\sum_{\chi\ne\chi_0}\mathcal N_\chi(I)}
&\ge(1-2\theta)\left(4-\frac1{c_\lambda(v)}\right)-2+o_{\eta,A_0}(1),\\
\frac{\sum_{\chi\ne\chi_0}\mathcal N^*_{0,\chi}(I)}
     {\sum_{\chi\ne\chi_0}\mathcal N_\chi(I)}
&\ge(1-2\theta)\left(4-\frac1{c_\lambda(v)}\right)-2+o_{\eta,A_0}(1),\\
\frac{\sum_{\chi\ne\chi_0}\mathcal N_{d,\chi}(I)}
     {\sum_{\chi\ne\chi_0}\mathcal N_\chi(I)}
&\ge\frac12\left\{(1-2\theta)
       \left(4-\frac1{c_\lambda(v)}\right)-1\right\}+o_{\eta,A_0}(1).
\end{align*}
The little-oh terms may depend on the fixed auxiliary data but are uniform in
\(T\).
\end{proposition}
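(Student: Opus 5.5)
This reduction assembles the four inputs listed in \S2.3 into a single matrix inequality per good character, sums over the family, and divides by the denominator. The matrix is the finite Gabor matrix
\[
M_\chi=\Bigl(\sum_{\rho}\overline{p_j(\gamma_\rho)}\,p_k(\gamma_\rho)\Bigr)_{j,k\in\mathcal K},
\]
built from the centres \(\tau_k\in J\). Its entries come from the zero side of Weil's Hermitian form with test functions \(\phi(u)e^{-i\tau_k u}\), and the normalization is by \(d\asymp\mathcal H\).

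\textbf{Step 1 (denominator and exceptional set).} Use the Section~3 counting input to get \eqref{eq:2.5}, i.e. \(\mathcal N_q\sim qTQ/(2\pi)\). Apply the Hiary--Zhao deletion to discard \(O(q/Q^2)\) exceptional characters. For these characters the trivial bound \(\mathcal N_\chi(I)\ll TQ+Q\) shows their total contribution is \(o(\mathcal N_q)\). This needs care at the lower end, where \(TQ\) is only \((\log Q)^{1+\eta}\). Even there, \(q Q^{-2}\cdot Q=o(qTQ)\) holds since \(T\ge Q^{-1}(\log Q)^{1+\eta}\), so the deletion is negligible in every numerator and in the denominator.

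\textbf{Step 2 (moments).} For good \(\chi\), split \(M_\chi=M^{\mathrm{int}}_\chi+E_\chi\). Here \(M^{\mathrm{int}}_\chi\) collects zeros with ordinate in \(I\), and \(E_\chi\) collects the exterior zeros, which Section~9 controls in trace norm: \(\sum_{\chi\text{ good}}\norm{E_\chi}_1=o(qd)\). Sections~5--7 give the two moments:
\[
\sum_\chi\tr M_\chi=(1+o(1))\,q\,d\,\frac{a}{2\pi}\cdot(\text{norm.}),
\]
and \(\sum_\chi\norm{M_\chi}_F^2\) equals \((b+\lambda^2\mathcal J)\) times the same normalization squared. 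This uses orthogonality of prime polynomials of length \(q^\lambda<q\).

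\textbf{Step 3 (rank--trace certificate).} The Section~8 certificate bounds the positive index of \(M^{\mathrm{int}}_\chi\). Each simple critical zero contributes a rank-one positive block. A critical zero of multiplicity \(m\) contributes \(m\) to the trace but at most \(1\) to the rank. An off-line orbit contributes a block whose positive index is controlled by inertia. Combining the inequality \(\tr(M)^2\le \rank(M)\norm M_F^2\) on the positive part with these blocks gives
\[
\mathcal N_\chi-\text{(defect)}\ \ge\ \text{(rank lower bound)}.
\]
Let \(N_{\mathrm{mult}}\) and \(N_{\mathrm{off}}\) denote the zeros, counted with multiplicity, that are multiple critical zeros or lie off the line. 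The defect is then bounded so that
\[
\mathcal N^s_{0}\ge \mathcal N-\bigl(N_{\mathrm{mult}}+N_{\mathrm{off}}\bigr),
\qquad
N_{\mathrm{mult}}+N_{\mathrm{off}}\le 2\bigl(\mathcal N-\rank\bigr)+\ldots
\]
This accounts for the \(-2\) and the factor \(4\). Summing over \(\chi\) and applying Cauchy--Schwarz across the family, the total rank is at least \((\sum\tr)^2/\sum\norm{\cdot}_F^2\), which is \(\approx q\,d\,c_\lambda(v)\cdot(\ldots)\). The number of centres \(d\) gives the factor \(1-2\theta\) relative to \(\mathcal N_q\). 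The ratio then yields \((1-2\theta)(4-1/c_\lambda)-2\). The distinct-critical count satisfies the same bound, because a multiple zero costs rank \(1\). For \(\mathcal N_d\), an off-line orbit counts \(2\) distinct zeros, and averaging the two block types gives the \(\tfrac12\{\cdot-1\}\) form.

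\textbf{Main obstacle.} The delicate step is the perturbation from \(M_\chi\) to \(M^{\mathrm{int}}_\chi\). Positive index and rank are not continuous, so I would handle \(E_\chi\) with Weyl's inequality. Eigenvalues of size at most \(\varepsilon\) are counted as negligible, and the number of eigenvalues shifted past \(\varepsilon\) is at most \(\norm{E_\chi}_1/\varepsilon\). Summed over good characters, this is \(o(qd)\) by Section~9, uniformly in \(T\). The final choice is \(\varepsilon\to0\) slowly after \(q\to\infty\), respecting the stated order of limits.
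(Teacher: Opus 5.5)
There is a genuine gap in Step~3: the route you describe does not produce the constant $(1-2\theta)(4-1/c_\lambda(v))-2$.

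Carry your chain through with the moments of Sections~6--7, namely $\sum_\chi\tr\widehat G_\chi\sim(1-2\theta)\mathcal N_q$ and $\sum_\chi\norm{\widehat G_\chi}_F^2\sim(1-2\theta)\mathcal N_q/c_\lambda(v)$. Cauchy--Schwarz across the family gives $\sum_\chi n_+(\widehat A_\chi)\gtrsim(1-2\theta)c_\lambda(v)\mathcal N_q$. Your defect bound, once corrected, is $\mathcal N^s_{0,\chi}(I)\ge2n_+(\widehat A_\chi)-\mathcal N_\chi(I)$, and it then yields only $\{2(1-2\theta)c_\lambda(v)-1\}\mathcal N_q$.

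Since $4-1/c-2c=1+(2c-1)(1-c)/c$ and $c_\lambda(v)\le\lambda<1$, this is strictly weaker than the claim for small $\theta$ whenever $c_\lambda(v)>1/2$. At the optimal window with $\lambda\uparrow1$ it gives about $0.507$ instead of $C_{\mathrm{MT}}\approx0.6725$. For distinct zeros the same route, via $\mathcal N_{d,\chi}\ge n_+$, gives about $0.753$ instead of $C_d$. Your assertion that this ``accounts for the $-2$ and the factor $4$'' is not backed by any computation.

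Your inequality $N_{\mathrm{mult}}+N_{\mathrm{off}}\le2(\mathcal N-\rank)$ is also false as written. A simple off-line orbit contributes $2$ to both $N_{\mathrm{off}}$ and $\mathcal N$, yet its block $xy^*+yx^*$ may have rank two. Only its positive index is at most one, which is why inertia rather than rank must be used.

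The missing ingredient is a per-character certificate that is \emph{linear} in the two moments and uses a per-block trace cap.
\begin{itemize}
\item Normalize by $aL^2$, not by $d$. The tight-frame identity then gives $r_\gamma=\norm{u(\gamma)}_2^2/(aL^2)\le1$.
\item Hence the simple critical-line part satisfies $P\succeq0$, $\rank P\le s_1$ and $\tr P\le s_1$. The remainder satisfies $n_+(R)\le s_2+p$ by Sylvester's law and subadditivity of the positive index.
\item The rank--trace inequality $\norm{P+R}_F^2\ge2\tr P-r+4\tr R-4b$ gives $4\tr\widehat A_\chi-\norm{\widehat A_\chi}_F^2\le3s_1+4s_2+4p$ for each character.
\item The counting identities \eqref{eq:8.5} convert this into the three certificates of Proposition~\ref{prop:zero-certificate}.
\item Summing linearly over good characters inserts the moments directly as $4(1-2\theta)-(1-2\theta)/c_\lambda(v)$, with no Cauchy--Schwarz loss.
\end{itemize}

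Linearity also removes the other difficulties you raise.
\begin{itemize}
\item The functional $4\tr X-\norm X_F^2$ is continuous in trace norm. The exterior tail is therefore transferred by $\bigl|\norm A_F^2-\norm G_F^2\bigr|\le2\norm G_F\norm E_1+\norm E_1^2$, and no Weyl-type eigenvalue counting is needed.
\item Bad characters are deleted using the eigenvalue cap $4x-x^2\le4$, which gives $C^{(0)}_\chi\le4d$ with $d|\mathcal B_q|\ll Q^{-2}\mathcal N_q$. Your Step~1 instead bounds the zero counts of bad characters. What actually must be controlled is their contribution to the moment sums, which are taken over the full family.
\end{itemize}

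Finally, the matrix must be built from $p_k(z_\rho)\overline{p_\ell(\overline{z_\rho})}$ with $z_\rho=\gamma-i\delta$. With real ordinates $\gamma_\rho$ it is positive semidefinite and is not the matrix whose moments the explicit formula evaluates.
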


The proof of Proposition~\ref{prop:fixed-auxiliary} is given in Section~10
after the four inputs have been established. Section~11 then optimizes
\(c_\lambda(v)\) and completes the proof of Theorem~\ref{thm:main}.

\section{Counting and zero-density input}

We use the symmetric counting formula, Selberg's family-averaged argument
estimate, and the zero-density estimate of Hiary--Zhao
\cite{Selberg1946,HiaryZhao2026}. Their hypotheses are stated before
specialization.

For \(t>0\), let
\begin{equation}
\tag{3.1}\label{eq:3.1}
N(t,\chi)=\#\{\rho:-t\le\Im\rho\le t\},
\end{equation}
with multiplicity. At a zero ordinate the source defines \(S(t,\chi)\) by
symmetric limiting values. The associated endpoint convention is immaterial
here, because the counting formula is applied only at the shifted
endpoint-free values supplied by Lemma~\ref{lem:half-open-extraction}.
We use the standard parity convention
\(a_\chi=(1-\chi(-1))/2\), consistent with the logarithmic-derivative
formula in Hiary--Zhao~\cite{HiaryZhao2026} and with
DLMF~\cite[(25.15.5)]{DLMF}. In this notation, the argument-principle
identity is
\begin{equation}
\tag{3.2}\label{eq:3.2}
\begin{aligned}
N(t,\chi)
={}&\frac{t}{\pi}\log\frac q\pi+S(t,\chi)+S(t,\bar\chi)\\
&+\frac1{2\pi}\int_{-t}^{t}
 \frac{\Gamma'}{\Gamma}\!\left(\frac14+\frac{a_\chi}{2}+\frac{iu}{2}\right)\,du.
\end{aligned}
\end{equation}
For both parities, the real part of the gamma integrand is
\(O(1+\log(|u|+2))\), uniformly for real \(u\), by the standard digamma
asymptotic.

Selberg's Theorem~8, in the form recalled by Hiary--Zhao, gives for every
fixed \(0<\varepsilon\le1/4\)
\begin{equation}
\tag{3.3}\label{eq:3.3}
\left|\frac1{q-2}\sum_{\chi\ne\chi_0}S(t,\chi)\right|
 \ll_\varepsilon1
\qquad (|t|\le q^{1/4-\varepsilon}).
\end{equation}
The implied constant is independent of \(q\) and \(t\). Hiary--Zhao make
this estimate explicit, with the constant \(1075\), in the smaller range
\(|t|\le1\); the qualitative wider range \eqref{eq:3.3} is what is needed
here. Because the nonprincipal family is closed under
\(\chi\mapsto\bar\chi\), the same average controls both argument terms.

\begin{lemma}[Positive-ordinate family symmetry]\label{lem:positive-symmetry}
For \(0<a<b\), with the same half-open endpoint convention,
\[
\sum_{\chi\ne\chi_0}\#\{\rho:a<\Im\rho\le b\}
 =\frac12\sum_{\chi\ne\chi_0}\#\{\rho:a<|\Im\rho|\le b\}.
\]
Both sides count multiplicity.
\end{lemma}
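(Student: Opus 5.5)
The plan is to use the involution \(\chi\mapsto\bar\chi\) on the family of nonprincipal characters together with the conjugation symmetry of zeros. Since \(\overline{L(\bar s,\chi)}=L(s,\bar\chi)\), the map \(\rho\mapsto\bar\rho\) is a multiplicity-preserving bijection from the nontrivial zeros of \(L(s,\chi)\) onto those of \(L(s,\bar\chi)\). Moreover \(\Im\bar\rho=-\Im\rho\), so for every character
\[
\#\{\rho\in Z(\chi):-b\le\Im\rho<-a\}=\#\{\rho\in Z(\bar\chi):a<\Im\rho\le b\},
\]
where \(Z(\chi)\) denotes the multiset of nontrivial zeros. The half-open convention transforms correctly: the closed right endpoint \(b\) corresponds to the closed left endpoint \(-b\). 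This agrees with the meaning of \(a<|\Im\rho|\le b\), whose negative part is exactly \(-b\le\Im\rho<-a\).

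The key steps are as follows. First, for each \(\chi\), split the count with \(a<|\Im\rho|\le b\) as the positive part \(P_\chi=\#\{a<\Im\rho\le b\}\) plus the negative part \(N_\chi=\#\{-b\le\Im\rho<-a\}\). This is valid because \(a>0\) excludes \(\Im\rho=0\), so the two ranges are disjoint and cover the whole set. Second, use the conjugation bijection above to obtain \(N_\chi=P_{\bar\chi}\). Third, sum over \(\chi\ne\chi_0\). Since \(\chi\mapsto\bar\chi\) is a permutation of the nonprincipal characters (it is an involution, and \(\bar\chi\) is nonprincipal whenever \(\chi\) is), we get \(\sum_\chi N_\chi=\sum_\chi P_{\bar\chi}=\sum_\chi P_\chi\). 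Hence \(\sum_\chi(P_\chi+N_\chi)=2\sum_\chi P_\chi\), which is the identity after dividing by \(2\).

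All sums are finite, since each \(L(s,\chi)\) has finitely many zeros in a bounded strip, so the rearrangement is harmless. I do not expect a genuine obstacle. The only point needing care is the endpoint convention: the statement must not quietly assume that \(a\) and \(b\) are not zero ordinates. The bijection argument handles this automatically, because it matches the closed and open ends exactly. No nonvanishing assumption on \(L(s,\chi)\) at real points is needed, since \(a>0\).
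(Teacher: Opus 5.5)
Your proof is correct and follows the paper's route: complex conjugation carries the zeros of \(L(s,\chi)\) with ordinate in \([-b,-a)\) bijectively, with multiplicity, onto those of \(L(s,\bar\chi)\) with ordinate in \((a,b]\), and \(\chi\mapsto\bar\chi\) permutes the nonprincipal family. Your remarks on the disjointness of the two parts (from \(a>0\)) and on the matching of the closed endpoints spell out details that the paper's short argument leaves implicit.
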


\begin{proof}
Complex conjugation maps a zero \(\rho\) of \(L(s,\chi)\) to the zero
\(\bar\rho\) of \(L(s,\bar\chi)\), reversing its ordinate and preserving
multiplicity. Since \(\chi\mapsto\bar\chi\) permutes the nonprincipal family,
the family totals for positive and negative ordinates are equal. Explicitly,
the negative component \([-b,-a)\) is carried bijectively, including
multiplicity, to the positive component \((a,b]\).
\end{proof}

\begin{lemma}[Extraction of the half-open interval]
\label{lem:half-open-extraction}
For each prime \(q\), there is a number \(0<\upsilon_q<T/4\) such that no
nontrivial zero of any nonprincipal \(L(s,\chi)\) has absolute ordinate in
\[
 (T,T+\upsilon_q]\cup(2T,2T+\upsilon_q].
\]
For every such choice,
\begin{equation}
\tag{3.3.1}\label{eq:half-open-extraction}
2\mathcal N_q
 =\sum_{\chi\ne\chi_0}
  \{N(2T+\upsilon_q,\chi)-N(T+\upsilon_q,\chi)\}.
\end{equation}
\end{lemma}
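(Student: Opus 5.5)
The plan has two parts: first find a zero-free gap just above \(T\) and \(2T\), then read off the half-open count from the symmetric counting function and apply Lemma~\ref{lem:positive-symmetry}.

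\emph{Existence of \(\upsilon_q\).} For fixed \(q\) there are only \(q-2\) nonprincipal characters, and each \(L(s,\chi)\) has finitely many nontrivial zeros with \(|\Im\rho|\le 3T\), since the zeros have no finite accumulation point. So the set \(Z_q\) of absolute ordinates \(|\Im\rho|\) of all these zeros in \([0,3T]\) is finite. Take \(\upsilon_q\) smaller than \(T/4\) and smaller than every positive difference \(z-T\) and \(z-2T\) with \(z\in Z_q\). If there are no such differences, take \(\upsilon_q=T/8\). Then no absolute ordinate lies in \((T,T+\upsilon_q]\) or \((2T,2T+\upsilon_q]\). This choice depends on \(q\) but uses nothing quantitative, which is all the statement asks.

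\emph{The identity.} By \eqref{eq:3.1}, \(N(t,\chi)\) counts zeros with \(|\Im\rho|\le t\), with multiplicity. Hence
\[
N(2T+\upsilon_q,\chi)-N(T+\upsilon_q,\chi)=\#\{\rho:T+\upsilon_q<|\Im\rho|\le 2T+\upsilon_q\}.
\]
The choice of \(\upsilon_q\) lets us move both endpoints. Removing \((2T,2T+\upsilon_q]\) from the top of the range loses nothing. Adding \((T,T+\upsilon_q]\) at the bottom gains nothing, because neither set contains an absolute ordinate. The right side therefore equals \(\#\{\rho:T<|\Im\rho|\le2T\}\), with multiplicity.

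Summing over \(\chi\ne\chi_0\) and applying Lemma~\ref{lem:positive-symmetry} with \(a=T\), \(b=2T\) gives
\[
\sum_{\chi\ne\chi_0}\#\{T<|\Im\rho|\le2T\}=2\sum_{\chi\ne\chi_0}\#\{T<\Im\rho\le 2T\}=2\mathcal N_q,
\]
by \eqref{eq:2.2} and \eqref{eq:2.3}. This is \eqref{eq:half-open-extraction}.

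\emph{Main obstacle.} Only the bookkeeping needs care. The symmetric function \(N\) includes its endpoint \(t\), so shifting both endpoints to the right is exactly what turns the closed count into the half-open window \((T,2T]\). The positive value \(T>0\) also guarantees that zeros with ordinate in \([-T,T]\), including real ones, are excluded on both sides.
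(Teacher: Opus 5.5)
Your proposal is correct and follows essentially the same route as the paper's proof. Both arguments use finiteness of the absolute ordinates up to \(3T\) to choose the gap \(\upsilon_q\), then a common right shift of both endpoints of \(N\) to extract \(T<|\Im\rho|\le2T\) exactly, and finally Lemma~\ref{lem:positive-symmetry} to halve the symmetric family count.
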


\begin{proof}
For fixed \(q\), the union over the finite nonprincipal family of the
nontrivial zeros with \(|\Im\rho|\le3T\) is finite. Hence \(\upsilon_q\) may be
chosen smaller than \(T/4\) and smaller than the distance from each of
\(T\) and \(2T\) to the next larger absolute zero ordinate, if such an
ordinate exists. The difference on the right of
\eqref{eq:half-open-extraction} then counts exactly the zeros with
\(T<|\Im\rho|\le2T\), including zeros at absolute ordinate \(2T\) and
excluding those at absolute ordinate \(T\). Lemma~\ref{lem:positive-symmetry}
shows that this symmetric family count is twice the positive-ordinate count
\(\mathcal N_q\). The shifted endpoints themselves are not zero ordinates,
so no endpoint half-weight occurs.
\end{proof}

Choose \(\upsilon_q\) as in Lemma~\ref{lem:half-open-extraction}. Fix
\(\varepsilon=1/5\) in \eqref{eq:3.3}. Since \(T\le Q^{A_0}\), both shifted endpoints are at most
\(q^{1/20}\) for all sufficiently large \(q\), uniformly in \(T\). We may therefore apply
\eqref{eq:3.2} at \(2T+\upsilon_q\) and \(T+\upsilon_q\), sum over the
nonprincipal family, and use \eqref{eq:half-open-extraction}. The difference
of the two shifted heights is exactly \(T\), so the conductor contribution
is
\[
 \frac{(q-2)T}{\pi}\log\frac q\pi.
\]
At either endpoint, closure under \(\chi\mapsto\bar\chi\) and
\eqref{eq:3.3} show that the difference of the argument contributions is
\(O(q)\). The gamma-integral difference consists of two intervals of total
length \(2T\), on which the integrand is
\(O(1+\log(T+2))\); its family contribution is therefore
\(O(qT\log(T+2))\), uniformly in \(\upsilon_q\). Every count includes
multiplicity. Consequently
\begin{equation}
\tag{3.4}\label{eq:3.4}
\begin{aligned}
2\mathcal N_q
 &=\frac{(q-2)T}{\pi}\log\frac q\pi
   +O(qT\log(T+2))+O(q),\\
\mathcal N_q
 &=\frac{qTQ}{2\pi}+O(qT\log(T+2))+O(q)\\
 &=\frac{qTQ}{2\pi}\{1+o_{\eta,A_0}(1)\},
\end{aligned}
\end{equation}
uniformly for \(T\) in \eqref{eq:2.1}. Indeed the two relative remainders
are
\[
 O\!\left(\frac{\log(T+2)}Q\right)
 +O\!\left(\frac1{TQ}\right)
 \ll_{A_0}\frac{\log Q}{Q}+(\log Q)^{-1-\eta}
 =o_{\eta,A_0}(1).
\]
This proves \eqref{eq:2.5} without requiring either endpoint of \(I\) to be
free of zeros.

We also require the following zero-density estimate. For
\(1/2\le\sigma\le1\), let
\begin{equation}
\tag{3.5}\label{eq:3.5}
N(\sigma;t_1,t_2;\chi)
 =\sum_{\substack{L(\rho,\chi)=0\\ \Re\rho\ge\sigma\\
                  t_1\le\Im\rho\le t_2}}m_\rho.
\end{equation}

\begin{theorem}[Hiary--Zhao zero-density estimate]\label{thm:HZ-density}
Fix \(0<\eps\le1/4\) and \(0<\kappa<\eps/2\).  By
\cite[Theorem~2]{HiaryZhao2026}, for every \(q\ge q_0(\kappa)\), if
\begin{equation}
\tag{3.6}\label{eq:3.6}
\sigma\ge\frac12+\frac5{8\kappa\log q},\qquad
|t_1|,|t_2|\le q^{1/4-\eps},\qquad
t_2-t_1\ge\frac{1.73}{\kappa\log q},
\end{equation}
then
\begin{equation}
\tag{3.7}\label{eq:3.7}
\begin{aligned}
\sum_{\chi\ne\chi_0}N(\sigma;t_1,t_2;\chi)
 &<\left(4.79\kappa+
 \frac{4.12}{2(t_2-t_1)\log q-1.73/\kappa}\right)\\
 &\qquad{}\times q^{1-2\kappa(\sigma-1/2)}(t_2-t_1)\log q.
\end{aligned}
\end{equation}
\end{theorem}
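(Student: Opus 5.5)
The statement is imported from \cite[Theorem~2]{HiaryZhao2026} rather than proved from scratch, so the plan is a careful specialization. I would check that the source's hypotheses and notation match \eqref{eq:3.5}--\eqref{eq:3.7}, and I would not re-derive the mollified large-sieve argument behind the estimate.

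First, I would align the family. Hiary--Zhao state their density estimate for a sum over primitive characters modulo \(q\), or over nonprincipal characters to a prime modulus. Since \(q\) is an odd prime here, every nonprincipal character modulo \(q\) is primitive. So the family \(\{\chi\ne\chi_0\}\) in \eqref{eq:3.7} is exactly the source family, with \(q-2\) members and no imprimitive terms to discard. Next, I would compare the counting function. \eqref{eq:3.5} counts zeros with multiplicity \(m_\rho\), with \(\Re\rho\ge\sigma\) and \(t_1\le\Im\rho\le t_2\) closed in the ordinate. I would confirm that the source rectangle uses the same closed convention and multiplicity weighting. Where the source uses a half-open or open box, the closed box is contained in a slightly enlarged one. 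Perturbing \(t_1,t_2\) by an arbitrarily small amount keeps \eqref{eq:3.6} valid after a negligible loss, since the right side of \eqref{eq:3.7} is continuous in \(t_2-t_1\), and a limiting argument then recovers the closed count. The parity convention \(a_\chi=(1-\chi(-1))/2\) fixed before \eqref{eq:3.2} does not enter the density bound, but I would note that it agrees with the source's normalization of the completed \(L\)-function.

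Second, I would match the parameters \(\eps,\kappa\) and the numerical constants. The source states its range as \(|t|\le q^{1/4-\eps}\), the mollifier exponent condition \(0<\kappa<\eps/2\), and a threshold \(q_0(\kappa)\). I would check three things.
\begin{itemize}
\item The lower bound on \(\sigma\) is \(\tfrac12+\tfrac{5}{8\kappa\log q}\), and the minimal window length is \(1.73/(\kappa\log q)\).
\item The constants \(4.79\) and \(4.12\), and the expression \(2(t_2-t_1)\log q-1.73/\kappa\), appear exactly as transcribed. This expression is positive under the window-length hypothesis, since it exceeds \(2(1.73/\kappa)-1.73/\kappa>0\).
\item The saving factor is \(q^{1-2\kappa(\sigma-1/2)}\) with the stated sign.
\end{itemize}
If the source phrases the bound with \(N(\sigma,T,\chi)\) for a symmetric window \([-T,T]\), or normalizes by \(\varphi(q)\), I would rescale accordingly. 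This might change constants only by factors already absorbed, in which case the transcription would need correcting.

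The main obstacle is this bookkeeping fidelity rather than any analytic step: ensuring that the quoted inequality \eqref{eq:3.7} is literally what the source proves, including the admissible ranges of \(\eps\) and \(\kappa\) and the uniformity of \(q_0\) in \(\kappa\) alone. I would resolve it by reading the source's proof of Theorem~2 and tracing where each of \(5/8\), \(1.73\), \(4.79\) and \(4.12\) arises. If any range in the source is narrower, for example \(\eps\) bounded away from \(0\), I would restrict the statement correspondingly. Only fixed \(\eps,\kappa\) are used later, so this would not affect the application.
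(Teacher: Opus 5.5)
Your proposal is correct and matches the paper, which also gives no independent proof and simply imports the bound from Hiary--Zhao's Theorem~2; the only structural point is that for prime \(q\) the nonprincipal family coincides with the primitive family, and your check \(2(t_2-t_1)\log q-1.73/\kappa\ge1.73/\kappa>0\) is right. One small caveat on your endpoint-perturbation device: the limiting argument recovers only a non-strict inequality and needs a little room below \(q^{1/4-\eps}\), but this is harmless, since Lemma~\ref{lem:exceptional} uses the bound only up to constants and with \(H_j\le q^{1/48}\), well inside the admissible range.
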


In \eqref{eq:3.7}, the interval-length factor lies outside the power of
\(q\); this is the form used in the shell calculation below.

For later reference we record the consequence of
Theorem~\ref{thm:HZ-density} used to delete exceptional characters. Put
\begin{equation}
\tag{3.8}\label{eq:3.8}
\delta_0=\frac{24\log Q}{Q}.
\end{equation}
Call \(\chi\) \emph{bad} if \(L(s,\chi)\) has a nontrivial zero
\(\rho=1/2+\delta+i\gamma\) satisfying
\begin{equation}
\tag{3.9}\label{eq:3.9}
|\delta|\ge\delta_0,\qquad |\gamma|\le q^{|\delta|/24};
\end{equation}
otherwise call it \emph{good}. The condition is invariant under the
same-character symmetry \(\rho\mapsto1-\bar\rho\).

\begin{lemma}[Exceptional characters]\label{lem:exceptional}
The set \(\mathcal B_q\) of bad nonprincipal characters satisfies
\begin{equation}
\tag{3.10}\label{eq:3.10}
|\mathcal B_q|\ll\frac q{Q^2}.
\end{equation}
\end{lemma}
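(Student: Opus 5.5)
The plan is to cover the bad set by dyadic-free, additively spaced shells in the horizontal displacement \(\delta\), and to bound the number of zeros in each shell with Theorem~\ref{thm:HZ-density}. Since \(|\mathcal B_q|\) is at most the total multiplicity of offending zeros over the family, it suffices to show that
\[
\sum_{\chi\ne\chi_0}\#\{\rho=\tfrac12+\delta+i\gamma:\ \delta\ge\delta_0,\ |\gamma|\le q^{\delta/24}\}\ll \frac q{Q^2}.
\]

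\emph{Step 1: reduction to \(\delta>0\).} The badness condition \eqref{eq:3.9} is invariant under \(\rho\mapsto1-\bar\rho\), which preserves \(\gamma\) and sends \(\delta\) to \(-\delta\). So every bad character has a zero with \(\delta\ge\delta_0\) and \(|\gamma|\le q^{\delta/24}\). Such a zero lies in the critical strip, so \(\delta<1/2\).

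\emph{Step 2: fix the parameters and set up the shells.} Fix \(\varepsilon=1/5\) and a fixed \(\kappa\) with \(1/24<\kappa<1/10\), for instance \(\kappa=0.09\). Then \(\kappa<\varepsilon/2\) and \(2\kappa-1/24>0\). Define
\[
\delta_j=\delta_0+\frac jQ,\qquad 0\le j\le Q.
\]
A zero with \(\delta\in[\delta_j,\delta_{j+1})\) has
\[
\Re\rho\ge\sigma_j:=\tfrac12+\delta_j,\qquad |\gamma|\le q^{\delta_{j+1}/24}=e^{1/24}q^{\delta_j/24}=:Y_j .
\]
We apply Theorem~\ref{thm:HZ-density} with \(\sigma=\sigma_j\) and \([t_1,t_2]=[-Y_j,Y_j]\), after checking \eqref{eq:3.6}:
\begin{itemize}
\item \(\sigma_j\ge\tfrac12+\delta_0\ge\tfrac12+5/(8\kappa Q)\) for large \(q\), because \(\delta_0=24\log Q/Q\);
\item \(Y_j\le e^{1/24}q^{1/48}\le q^{1/4-\varepsilon}=q^{1/20}\) for large \(q\);
\item \(t_2-t_1=2Y_j\ge2\ge 1.73/(\kappa Q)\).
\end{itemize}

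\emph{Step 3: bound each shell and sum.} The second term in the bracket of \eqref{eq:3.7} is \(O(1/Q)\), so the bracket is \(O(1)\). Hence the \(j\)-th shell contributes
\[
\ll q^{1-2\kappa\delta_j}\,Y_j\,Q\ll qQ\,q^{-(2\kappa-1/24)\delta_j}
= qQ\,Q^{-24(2\kappa-1/24)}\,e^{-(2\kappa-1/24)j}.
\]
With \(\kappa=0.09\) the exponent satisfies \(24(2\kappa-1/24)=24(0.18)-1=3.32>3\). Summing the geometric series in \(j\) therefore gives a total of \(\ll qQ^{1-3.32}\ll q/Q^2\), which is \eqref{eq:3.10}.

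The main obstacle is the exponent bookkeeping. Dyadic shells \(\delta_{j+1}=2\delta_j\) lose too much: the height factor \(q^{\delta_{j+1}/24}\) then eats most of the saving \(q^{-2\kappa\delta_j}\), leaving only a power \(Q^{-c}\) with \(c<1\) at \(\delta_0\). The plan avoids this in two ways. First, it uses additive shells of width \(1/Q\), so that \(q^{\delta_{j+1}/24}\asymp q^{\delta_j/24}\). Second, it pushes \(\kappa\) close to the limit \(\varepsilon/2\) allowed by Theorem~\ref{thm:HZ-density}, so that the saving at \(\delta_0\) exceeds \(Q^{-3}\) and absorbs the factor \(Q\) from the zero count. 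The constant \(24\) in \(\delta_0\) and in the height cutoff \(q^{|\delta|/24}\) is what makes this margin available.
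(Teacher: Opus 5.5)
Your proof is correct and follows the paper's argument essentially step for step: the same reduction to $\delta\ge\delta_0$ via $\rho\mapsto1-\bar\rho$, the same additive shells $\delta_j=\delta_0+j/Q$ with heights $q^{\delta_{j+1}/24}\asymp q^{\delta_j/24}$, and the same application of Theorem~\ref{thm:HZ-density} with $\varepsilon=1/5$, followed by a geometric sum. The only difference is your $\kappa=0.09$ in place of the paper's $\kappa=1/12$; the paper's choice gives a net saving of exactly $q^{-\delta_j/8}=Q^{-3}e^{-j/8}$, which already suffices because $Q\cdot Q^{-3}=Q^{-2}$, so, contrary to your closing remark, the saving at $\delta_0$ need not strictly exceed $Q^{-3}$.
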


\begin{proof}
A nontrivial zero has \(|\delta|<1/2\).  If a bad character is witnessed by
a zero with \(\delta\le-\delta_0\), the same-character functional-equation
symmetry
\[
 \frac12+\delta+i\gamma\longmapsto
 \frac12-\delta+i\gamma
\]
produces a witnessing zero in the right half of the critical strip, with the
same ordinate and multiplicity.  Thus it is enough to count witnesses with
\(\delta\in[\delta_0,1/2)\); this reduction introduces no factor two.

Take
\[
 \eps=\frac15,\qquad \kappa=\frac1{12},\qquad
 2\kappa=\frac16,\qquad \frac14-\eps=\frac1{20}.
\]
Put \(\delta_j=\delta_0+j/Q\), and for large \(q\) let
\[
 J=\max\{j\ge0:\delta_j<1/2\}.
\]
For \(0\le j\le J\), define
\[
 \delta_j^+=\min(\delta_{j+1},1/2),\qquad
 \mathfrak S_j=[\delta_j,\delta_j^+),\qquad
 H_j=q^{\delta_j^+/24}.
\]
These shells partition \([\delta_0,1/2)\).  For \(j<J\) their width is
\(1/Q\), while the terminal shell satisfies
\(0<\delta_J^+-\delta_J\le1/Q\).  A bad character represented in
\(\mathfrak S_j\) has, after the preceding symmetry if necessary, a zero
with
\[
 \Re\rho\ge\frac12+\delta_j,
 \qquad |\Im\rho|\le q^{\delta/24}\le H_j.
\]
It therefore contributes at least one, with its full multiplicity, to
\(N(1/2+\delta_j;-H_j,H_j;\chi)\).  Hence the number of characters
represented in the shell is bounded by the corresponding family density
count.

Apply Theorem~\ref{thm:HZ-density} with
\[
 \sigma=\frac12+\delta_j,\qquad t_1=-H_j,\qquad t_2=H_j.
\]
We verify the hypotheses explicitly.  First,
\[
 \delta_j\ge\delta_0=\frac{24\log Q}{Q}
 >\frac5{8\kappa Q}=\frac{15}{2Q}
\]
for large \(q\).  Next,
\[
 H_j\ge q^{\delta_0/24}=Q,
 \qquad
 H_j\le q^{1/48}<q^{1/20}=q^{1/4-\eps}.
\]
Moreover
\[
 t_2-t_1=2H_j\ge2Q>
 \frac{1.73}{\kappa Q}=\frac{20.76}{Q},
\]
and the denominator in the density prefactor is exactly
\[
 2(t_2-t_1)Q-\frac{1.73}{\kappa}
 =4H_jQ-20.76>0.
\]
Thus the full parenthetical factor in \eqref{eq:3.7} is bounded uniformly in
\(j\) and \(q\).  Because the interval-length factor is outside the power of
\(q\), substitution gives
\[
 q^{1-2\kappa\delta_j}(t_2-t_1)Q
 =2q^{1-\delta_j/6}H_jQ.
\]
The shell width, including the terminal one, satisfies
\(0<\delta_j^+-\delta_j\le1/Q\), and therefore
\[
 H_j=q^{\delta_j^+/24}
 \le e^{1/24}q^{\delta_j/24}.
\]
Consequently the number of characters represented in the \(j\)-th shell is
\begin{equation}
\tag{3.11}\label{eq:3.11}
 \ll q^{1-\delta_j/6}H_jQ
 \ll qQq^{-\delta_j/8}.
\end{equation}
Finally,
\[
 q^{-\delta_0/8}=Q^{-3},\qquad
 q^{-(\delta_j-\delta_0)/8}=e^{-j/8}.
\]
Summing over \(0\le j\le J\), including the terminal shell, gives
\[
 |\mathcal B_q|
 \ll qQ\,Q^{-3}\sum_{j=0}^{J}e^{-j/8}
 \ll\frac q{Q^2},
\]
which proves \eqref{eq:3.10}.
\end{proof}

\begin{corollary}[Counting and deletion interface]
\label{cor:counting-deletion-interface}
For every fixed \(\eta>0\) and \(A_0>0\), uniformly for \(T\) satisfying
\eqref{eq:2.1},
\[
\begin{gathered}
 \sum_{\chi\ne\chi_0}\mathcal N_\chi(I)
 =\frac{qTQ}{2\pi}+O(qT\log(T+2))+O(q)
 =\frac{qTQ}{2\pi}\{1+o_{\eta,A_0}(1)\},\\
 |\mathcal B_q|\ll\frac q{Q^2}.
\end{gathered}
\]
Here the first count and all density counts entering the second estimate
include multiplicity.
\end{corollary}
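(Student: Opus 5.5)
The corollary only repackages what has already been proved, so the plan is to assemble two pieces. Neither requires new input.

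For the first display, we start from Lemma~\ref{lem:half-open-extraction}, which expresses \(2\mathcal N_q\) as a family sum of differences of the symmetric counting function at the shifted, zero-free heights \(2T+\upsilon_q\) and \(T+\upsilon_q\). Since \(T\le Q^{A_0}\), both heights are at most \(q^{1/20}\) for large \(q\). We may therefore apply \eqref{eq:3.2} at each height and sum over \(\chi\ne\chi_0\).

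We then handle the three resulting contributions:
\begin{itemize}
\item The conductor term gives exactly \((q-2)T\pi^{-1}\log(q/\pi)\), because the two heights differ by \(T\).
\item The argument terms are controlled by Selberg's average \eqref{eq:3.3} with \(\varepsilon=1/5\), using closure of the family under \(\chi\mapsto\bar\chi\) for the \(S(t,\bar\chi)\) term; this contributes \(O(q)\).
\item The gamma integrals differ on two intervals of total length \(2T\), where the digamma real part is \(O(1+\log(T+2))\); this contributes \(O(qT\log(T+2))\).
\end{itemize}
Dividing by two and writing \((q-2)\log(q/\pi)=qQ+O(Q)\) gives the first equality of \eqref{eq:3.4}. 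The \(O(TQ)\) discrepancy is absorbed into \(O(qT\log(T+2))\), or into \(O(q)\) when \(T\le Q^{A_0}\) is small.

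For the relative form, we divide by \(qTQ/(2\pi)\). The relative errors are \(O(\log(T+2)/Q)\ll_{A_0}\log Q/Q\), using \(T\le Q^{A_0}\), and \(O(1/(TQ))\le(\log Q)^{-1-\eta}\), using the lower bound in \eqref{eq:2.1}. Both bounds are uniform in \(T\).

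The second display is exactly Lemma~\ref{lem:exceptional}. Its proof counts bad characters shell by shell through Theorem~\ref{thm:HZ-density}, with each witnessing zero counted with multiplicity in \(N(\sigma;t_1,t_2;\chi)\). It uses no dependence on \(T\), so the bound is trivially uniform.

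The only point needing care is the uniformity in \(T\) of the error terms and the endpoint convention. Lemma~\ref{lem:half-open-extraction} settles the endpoint issue, since the counting formula is only ever evaluated at zero-free heights. There is no genuine obstacle.
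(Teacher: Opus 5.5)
Your proposal is correct and takes the paper's own route: the corollary is just \eqref{eq:3.4}, obtained from the shifted endpoints of Lemma~\ref{lem:half-open-extraction}, the counting formula \eqref{eq:3.2}, Selberg's average with \(\varepsilon=1/5\), and the digamma bound, together with Lemma~\ref{lem:exceptional}, whose bad set does not depend on \(T\). One harmless slip is that \((q-2)\log(q/\pi)=qQ+O(q)\), not \(qQ+O(Q)\), because of the \(-q\log\pi\) term; the resulting \(O(qT)\) discrepancy is still absorbed into \(O(qT\log(T+2))\), since \(\log(T+2)\ge\log2\).
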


\begin{proof}
The two assertions are \eqref{eq:3.4} and \eqref{eq:3.10}, respectively.
\end{proof}

\begin{lemma}[Uniform local zero count]\label{lem:local-zero-count}
For every primitive nonprincipal \(\chi\pmod q\) and every real \(t\),
\[
\sum_{\substack{L(\rho,\chi)=0\\ |\Im\rho-t|\le1}}m_\rho
 \ll Q+\log(|t|+3).
\]
Consequently, uniformly in \(H\ge0\),
\begin{equation}
\tag{3.12}\label{eq:3.12}
\sum_{\substack{L(\rho,\chi)=0\\ H<|\Im\rho|\le H+1}}m_\rho
 \ll Q+\log(H+3)\qquad(H\ge0),
\end{equation}
where the zeros are nontrivial and counted with multiplicity.
\end{lemma}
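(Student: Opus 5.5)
The plan is to prove the local bound through the classical partial-fraction expansion of \(L'/L\), arguing in the standard way (Davenport, Chapter~16) with the gamma factor tracked uniformly in \(t\). Since \(q\) is prime, every nonprincipal \(\chi\) is primitive with conductor \(q\). Write \(\xi(s,\chi)=(q/\pi)^{(s+a_\chi)/2}\Gamma((s+a_\chi)/2)L(s,\chi)\), which is entire of order one. The Hadamard product gives
\[
\frac{L'}{L}(s,\chi)=-\frac12\log\frac q\pi-\frac12\frac{\Gamma'}{\Gamma}\!\left(\frac{s+a_\chi}2\right)+B(\chi)+\sum_\rho\left(\frac1{s-\rho}+\frac1\rho\right),
\]
where \(\Re B(\chi)=-\sum_\rho\Re(1/\rho)\). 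Taking real parts therefore eliminates \(B(\chi)\) and yields
\[
\Re\sum_\rho\frac1{s-\rho}=\Re\frac{L'}{L}(s,\chi)+\frac12\log\frac q\pi+\frac12\Re\frac{\Gamma'}{\Gamma}\!\left(\frac{s+a_\chi}2\right).
\]

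Evaluate this identity at \(s=2+it\). On the right, \(|L'/L(2+it,\chi)|\le\sum_n\Lambda(n)n^{-2}\ll1\). By the digamma asymptotic, the gamma term at \((2+a_\chi+it)/2\) is \(O(\log(|t|+3))\), and this holds uniformly in \(t\) because the real part of the argument is at least \(1\), which keeps it away from the poles. The right side is therefore \(\ll Q+\log(|t|+3)\).

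Every nontrivial zero has \(0<\beta<1\), so each term \(\Re\frac1{s-\rho}=\frac{2-\beta}{(2-\beta)^2+(t-\gamma)^2}\) is positive. When \(|\gamma-t|\le1\), the term is at least \(1/(4+1)=1/5\). Dropping all other zeros gives \(\frac15\sum_{|\gamma-t|\le1}m_\rho\ll Q+\log(|t|+3)\), which is the first assertion. Multiplicities are handled automatically, because the Hadamard sum repeats each zero according to its multiplicity.

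For \eqref{eq:3.12}, note that the set \(\{H<|\Im\rho|\le H+1\}\) lies inside \(\{|\Im\rho-(H+\tfrac12)|\le1\}\cup\{|\Im\rho+(H+\tfrac12)|\le1\}\). Apply the first bound with \(t=\pm(H+\tfrac12)\) and use \(\log(H+\tfrac12+3)\ll\log(H+3)\) to conclude. (Alternatively, one could appeal to the conjugation symmetry of Lemma~\ref{lem:positive-symmetry}, but that only works at the family level, so the direct union bound is cleaner here.)

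There is no genuine obstacle. The one point needing care is uniformity in \(t\) near \(0\) and for both parities, which the positive real part of the digamma argument already supplies. We also never need the real part of the constant \(B(\chi)\) explicitly, since only the identity \(\Re B(\chi)=-\sum\Re(1/\rho)\) enters, and it cancels exactly when real parts are taken.
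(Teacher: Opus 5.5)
Your proof is correct, but it takes a different route from the paper.

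The paper does not use the Hadamard product. Instead it imports the explicit symmetric counting theorem of Bennett--Martin--O'Bryant--Rechnitzer, in the form $N^{\mathrm{sym}}_\chi(U)=\frac U\pi\log\frac{qU}{2\pi e}-\frac{\chi(-1)}4+O(Q+\log(U+3))$. It then bounds the unit window by the difference $N^{\mathrm{sym}}_\chi(|t|+1)-N^{\mathrm{sym}}_\chi(|t|-2)$, subtracting the two main terms and absorbing the two explicit remainders. A separate case $|t|\le3$ is needed there, because the cited theorem only applies for $U\ge5/7$.

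You instead use the classical Davenport (Chapter~16) positivity argument. You expand the logarithmic derivative via the Hadamard product at $s=2+it$, eliminate $B(\chi)$ through $\Re B(\chi)=-\sum_\rho\Re(1/\rho)$, and use $\Re\frac1{s-\rho}>\frac15$ on the window.

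Your route is self-contained apart from the Hadamard factorization and that identity for $\Re B(\chi)$, which is a short consequence of the functional equation (Davenport, Chapter~12). It works uniformly in $t$ with no case split near $t=0$. It produces only an upper bound, which is exactly what the lemma asserts. The paper's route is shorter given the citation and needs no Hadamard-constant identity. However, it relies on a much deeper explicit asymptotic for $N^{\mathrm{sym}}_\chi$ than the statement actually requires.

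The deduction of \eqref{eq:3.12}, covering the shell by the two windows centred at $\pm(H+\frac12)$, is identical in both proofs. Both handle multiplicities, and non-real characters, without any appeal to conjugation symmetry.
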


\begin{proof}
Let \(N_\chi^{\mathrm{sym}}(U)\) count the nontrivial zeros
\(\rho=\beta+i\gamma\) with \(|\gamma|\le U\), including multiplicity.
For conductor \(q>1\) and \(U\ge5/7\), set
\[
 \ell(U)=\log\frac{q(U+2)}{2\pi}.
\]
The published Theorem~1.1 of Bennett--Martin--O'Bryant--Rechnitzer
\cite{BMOR2021} states that \(N_\chi^{\mathrm{sym}}(U)=0\) when
\(\ell(U)\le1.567\), while for \(\ell(U)>1.567\),
\[
 \left|N_\chi^{\mathrm{sym}}(U)-
 \left\{\frac{U}{\pi}\log\frac{qU}{2\pi e}
       -\frac{\chi(-1)}4\right\}\right|
 \le0.22737\ell(U)+2\log(1+\ell(U))-0.5.
\]
Both branches, together with \(\ell(U)\ll Q+\log(U+3)\), imply
\begin{equation}
\tag{3.12.1}\label{eq:hadamard-log-derivative}
 N_\chi^{\mathrm{sym}}(U)
 =\frac{U}{\pi}\log\frac{qU}{2\pi e}
  -\frac{\chi(-1)}4
  +O\!\left(Q+\log(U+3)\right).
\end{equation}
If \(|t|\le3\), the required unit-window count is at most
\(N_\chi^{\mathrm{sym}}(4)\ll Q\).  Suppose \(|t|>3\). Every zero with
\(|\Im\rho-t|\le1\) belongs to the symmetric shell
\(|t|-2<|\Im\rho|\le |t|+1\); hence
\begin{equation}
\tag{3.13}\label{eq:3.13}
 \sum_{|\Im\rho-t|\le1}m_\rho
 \le N_\chi^{\mathrm{sym}}(|t|+1)
     -N_\chi^{\mathrm{sym}}(|t|-2)
 \ll Q+\log(|t|+3),
\end{equation}
where the last estimate follows by subtracting the two main terms in
\eqref{eq:hadamard-log-derivative} and bounding both explicit remainders.
This proves the first assertion without any symmetry about the real axis for
a fixed nonreal character.  The shell \(H<|\Im\rho|\le H+1\) is covered by
the two unit windows centred at \(H+1/2\) and \(-H-1/2\), which proves
\eqref{eq:3.12}.  All counts include multiplicity and any real exceptional
zero.
\end{proof}

\section{The explicit formula on the matrix test class}

For \(\chi\ne\chi_0\pmod q\), put
\begin{equation}
\tag{4.1}\label{eq:4.1}
a_\chi=\frac{1-\chi(-1)}2\in\{0,1\},
\qquad
\Lambda(s,\chi)=\left(\frac q\pi\right)^{(s+a_\chi)/2}
 \Gamma\!\left(\frac{s+a_\chi}{2}\right)L(s,\chi).
\end{equation}
Since \(q\) is prime, every such character is primitive.  The parity and
completion in \eqref{eq:4.1}, together with the functional equation used
below, agree with DLMF~\cite[(25.15.5)]{DLMF}. For real \(t\), define
\begin{equation}
\tag{4.2}\label{eq:4.2}
\mu_\chi(t)=\frac1{2\pi}\log\frac q\pi
 +\frac1{2\pi}\Re\frac{\Gamma'}{\Gamma}
 \!\left(\frac{1/2+a_\chi+it}{2}\right),
\end{equation}
\begin{equation}
\tag{4.3}\label{eq:4.3}
P_{\chi,L}(t)=-\frac1{2\pi}\sum_{n\le e^L}\frac{\Lambda(n)}{\sqrt n}
 \left(\chi(n)n^{-it}+\overline{\chi(n)}n^{it}\right),
\end{equation}
and
\begin{equation}
\tag{4.4}\label{eq:4.4}
\nu_{\chi,L}(t)=\mu_\chi(t)+P_{\chi,L}(t).
\end{equation}
The object in \eqref{eq:4.4} is an ordinary, smooth, real-valued function
which depends on the bandwidth \(L\). It is not a pointwise density of zeros.
We use the standard Weil explicit formula.  Zhao
\cite[Lemma~4]{Zhao2026} records the same zero--gamma--prime structure for
strip-analytic tests that are real on the real axis, and cites
\cite[Theorem~5.12]{IwaniecKowalski2004}.  We fix the standard parity by
DLMF~\cite[(25.15.5)]{DLMF}; the Fourier conversion, the complex-linear
extension, and every convention-dependent factor are displayed explicitly
below.

For \(k\in\mathcal K\), let
\begin{equation}
\tag{4.5}\label{eq:4.5}
f_k(u)=\phi(u)e^{-i\tau_ku},
\qquad p_k(t)=\widehat\phi(t-\tau_k).
\end{equation}

We first isolate the scalar formula used for the matrix entries, making the
Fourier and polarization conventions explicit for complex, non-even tests.

\begin{proposition}[Complex Weil formula on the Paley--Wiener class]
\label{prop:complex-weil}
Let \(\widetilde H\in C_c^\infty(\R)\), and define
\[
 H(z)=\int_{\R}\widetilde H(u)e^{izu}\,du,
 \qquad
 \widetilde H(u)=\frac1{2\pi}\int_{\R}H(t)e^{-itu}\,dt.
\]
Then, for every primitive nonprincipal \(\chi\pmod q\),
\begin{equation}
\tag{4.5.1}\label{eq:complex-weil}
\begin{aligned}
\sum_{\substack{L(\rho,\chi)=0\\ \rho\ \mathrm{nontrivial}}}m_\rho
 H\!\left(\frac{\rho-1/2}{i}\right)
={}&\int_{\R}H(t)\mu_\chi(t)\,dt\\
&-\sum_{n\ge2}\frac{\Lambda(n)}{\sqrt n}
 \Bigl\{\chi(n)\widetilde H(\log n)\\
&\hspace{8em}{}+\overline{\chi(n)}\widetilde H(-\log n)\Bigr\}.
\end{aligned}
\end{equation}
The zero sum and the integral are absolutely convergent, and the prime sum
is finite.  No parity or reality condition on \(H|_{\R}\) is required.
\end{proposition}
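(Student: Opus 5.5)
We prove \eqref{eq:complex-weil} by the standard contour-integration derivation of the explicit formula, applied to the entire function \(F(s)=H((s-1/2)/i)\), keeping every convention explicit. Since \(\widetilde H\in C_c^\infty(\R)\) with support in \([-R,R]\), the Paley--Wiener estimate gives, for every \(N\),
\[
 |H(z)|\ll_N e^{R|\Im z|}(1+|z|)^{-N}.
\]
Hence \(F(s)=\int\widetilde H(u)e^{(s-1/2)u}\,du\) is entire, bounded by \(\ll_N q^{0}e^{R|\sigma-1/2|}(1+|t|)^{-N}\) in any vertical strip, and rapidly decreasing there. Combined with Lemma~\ref{lem:local-zero-count}, which gives \(\ll Q+\log(|t|+3)\) zeros in unit windows, this yields absolute convergence of the zero sum. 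The integral \(\int H\mu_\chi\) converges because \(\mu_\chi(t)\ll Q+\log(|t|+2)\).

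Integrate \(F(s)\,\Lambda'/\Lambda(s,\chi)\) around the rectangle with vertical sides \(\Re s=-1\) and \(\Re s=2\), and horizontal sides at heights \(\pm U\). Choose the sequence \(U\to\infty\) avoiding zero ordinates by a margin \(\gg 1/(Q+\log U)\). Such heights exist by Lemma~\ref{lem:local-zero-count}, and on them \(\Lambda'/\Lambda\ll(Q+\log U)^2\) by the partial-fraction expansion. The horizontal contributions therefore vanish, because \(F\) decays faster than any power. Since \(\Lambda(s,\chi)\) is entire for primitive nonprincipal \(\chi\), with zeros exactly at the nontrivial zeros, the residue sum is \(\sum m_\rho F(\rho)\), which is the left side of \eqref{eq:complex-weil}.

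Next apply the functional equation \(\Lambda(s,\chi)=\epsilon_\chi\Lambda(1-s,\bar\chi)\) from DLMF~\cite[(25.15.5)]{DLMF}. It converts the left line into \(-\int_{(2)}F(1-s)\,\Lambda'/\Lambda(s,\bar\chi)\,ds\). On \(\Re s=2\), expand
\[
 \frac{\Lambda'}{\Lambda}(s,\chi)=\tfrac12\log\frac q\pi+\tfrac12\psi\!\left(\frac{s+a_\chi}{2}\right)-\sum_n\Lambda(n)\chi(n)n^{-s}.
\]
For the prime terms, shift each integral \(\frac1{2\pi i}\int_{(2)}F(s)n^{-s}\,ds\) back to \(\Re s=1/2\); this is legitimate because \(F\) is entire and decays in strips. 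Then Fourier inversion gives exactly \(n^{-1/2}\widetilde H(\log n)\), with the sign fixed by \(F(1/2+it)=H(t)\) and \(\widetilde H(u)=\frac1{2\pi}\int H(t)e^{-itu}\,dt\). The reflected line gives \(\overline{\chi(n)}\,n^{-1/2}\widetilde H(-\log n)\), because \(F(1-s)\) at \(s=1/2+it\) equals \(H(-t)\). Both sums are finite, since \(\widetilde H(\pm\log n)=0\) for \(\log n>R\).

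For the gamma and conductor terms, shift the two lines to \(\Re s=1/2\). No poles are crossed, since the digamma poles lie at \(\Re s\le -a_\chi<1/2\). The result is
\[
 \frac1{2\pi}\int H(t)\Bigl[\log\frac q\pi+\tfrac12\psi\bigl(\tfrac{1/2+a_\chi+it}{2}\bigr)+\tfrac12\psi\bigl(\tfrac{1/2+a_\chi-it}{2}\bigr)\Bigr]dt.
\]
Here \(\chi\) and \(\bar\chi\) have the same parity, and the change of variables \(t\mapsto-t\) has been used on the reflected piece. Since \(\psi(\bar w)=\overline{\psi(w)}\), the bracket equals \(2\pi\mu_\chi(t)\). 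This step needs no reality or evenness of \(H\), only the complex-linear symmetry \(t\mapsto -t\) of the measure.

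The main obstacle is bookkeeping rather than analysis. The orientation and sign of the reflected integral, and the pairing of \(\chi(n)\) with \(\widetilde H(\log n)\) versus \(\overline{\chi(n)}\) with \(\widetilde H(-\log n)\), must both match the paper's Fourier convention \eqref{eq:2.11}. I would verify these against the case of real even \(H\), where the formula must reduce to \cite[Theorem~5.12]{IwaniecKowalski2004} and Zhao \cite[Lemma~4]{Zhao2026}. Since both sides are complex-linear in \(\widetilde H\), agreement there together with the structural derivation above fixes all the factors.
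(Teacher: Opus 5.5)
Your argument is correct and is essentially the contour computation the paper itself records: a rectangle for $F\,\Lambda'/\Lambda$, the functional equation on the left line, the Dirichlet series on the right, a shift to $\Re s=1/2$, and Fourier inversion. The only difference is that the paper formally proves the real-boundary case $H^\sharp=H$ by appeal to Zhao's Lemma~4 and then reaches general $H$ through $H_1=(H+H^\sharp)/2$, $H_2=(H-H^\sharp)/(2i)$ and complex linearity, whereas you rightly observe that nothing in the residue and line-shift computation uses reality of $H$ on $\R$, so that reduction is unnecessary. One caution: your proposed check against real even $H$ could not by itself fix the pairing of $\chi(n)$ with $\widetilde H(\log n)$ rather than $\widetilde H(-\log n)$, since $\widetilde H$ is then even. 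Your explicit evaluations $F(1/2+it)=H(t)$ and $F(1/2-it)=H(-t)$ already settle that pairing.
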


\begin{proof}
The Paley--Wiener representation makes \(H\) entire and rapidly decreasing
in the real direction in every fixed horizontal strip.  Since the
nontrivial zeros satisfy \(0<\Re\rho<1\), Lemma~\ref{lem:local-zero-count}
therefore gives absolute convergence of the zero sum.

For a function satisfying \(H^\sharp=H\), where
\(H^\sharp(z)=\overline{H(\overline z)}\), formula
\eqref{eq:complex-weil} is Zhao's real-boundary explicit formula
\cite[Lemma~4]{Zhao2026} after translating from the convention
\(\widehat f_Z(\xi)=\int f(t)e^{-2\pi i\xi t}\,dt\): namely,
\(\widehat f_Z(\log n/(2\pi))=2\pi\widetilde H(\log n)\).
We also record the contour normalization.  Apply the
residue theorem to the logarithmic derivative
of the completed function \eqref{eq:4.1}, with the factor
\(H((s-1/2)/i)\), on a symmetric rectangle whose horizontal sides avoid the
zeros.  The horizontal integrals vanish by the strip decay of \(H\).  On the
right vertical side, the absolutely convergent expansion
\[
 -\frac{L'}{L}(s,\chi)
 =\sum_{n\ge2}\frac{\Lambda(n)\chi(n)}{n^s}
\]
and Fourier inversion produce
\(\chi(n)\widetilde H(\log n)/\sqrt n\).  On the opposite side, the
functional equation
\(\Lambda(s,\chi)=\varepsilon_\chi\Lambda(1-s,\overline\chi)\), followed by
\(s\mapsto1-s\), produces
\(\overline{\chi(n)}\widetilde H(-\log n)/\sqrt n\).  The conductor and
gamma terms combine to
\[
 \frac1{2\pi}\int_{\R}H(t)
 \left\{\log\frac q\pi+
 \Re\frac{\Gamma'}{\Gamma}
 \!\left(\frac{1/2+a_\chi+it}{2}\right)\right\}\,dt,
\]
which is the integral against \(\mu_\chi\) in \eqref{eq:4.2}.  The residues
of the completed logarithmic derivative are precisely the nontrivial zeros,
with multiplicity.  This proves \eqref{eq:complex-weil} for real-valued
boundary data in the stated Fourier convention; it is the same contour
formula as \cite[Theorem~5.12]{IwaniecKowalski2004} and
\cite[Lemma~4]{Zhao2026}.  The parity in the gamma factor is the standard
one fixed independently by \eqref{eq:4.1} and DLMF~(25.15.5).

For general \(H\), put
\[
 H_1=\frac{H+H^\sharp}{2},\qquad
 H_2=\frac{H-H^\sharp}{2i}.
\]
Both functions belong to the same Paley--Wiener class and are real-valued on
\(\R\).  Apply the preceding formula to \(H_1\) and \(H_2\), then recombine
by complex linearity.  This proves the proposition.
\end{proof}

For the present test functions, define the polarized Weil form by
\[
 W_\chi(f_k,f_\ell)
 :=\sum_{\substack{L(\rho,\chi)=0\\ \rho\ \mathrm{nontrivial}}}
 m_\rho\,\widehat f_k(z_\rho)
 \overline{\widehat f_\ell(\overline{z_\rho})},
 \qquad z_\rho=\frac{\rho-1/2}{i}.
\]
The absolute convergence established above makes this definition independent
of any ordering of the zeros. Define
\(G_{\chi,k\ell}=W_\chi(f_k,f_\ell)\). Its zero side is therefore
\begin{equation}
\tag{4.6}\label{eq:4.6}
G_{\chi,k\ell}
 =\sum_{\substack{L(\rho,\chi)=0\\ \rho\ \mathrm{nontrivial}}}
 m_\rho\,p_k(z_\rho)\overline{p_\ell(\bar z_\rho)},
\qquad z_\rho=\frac{\rho-1/2}{i}.
\end{equation}
Zeros are counted with multiplicity.  Formula \eqref{eq:4.6} also fixes the
complex evaluation convention: for an off-line zero
\(\rho=1/2+\delta+i\gamma\), one has \(z_\rho=\gamma-i\delta\), not merely
\(\gamma\).  The series is absolutely convergent by
Lemma~\ref{lem:local-zero-count} and Paley--Wiener decay in the fixed strip
\(|\Im z|<1/2\). Section~9 strengthens this to trace-norm convergence,
which justifies grouping into functional-equation orbits and splitting the
matrix into its interior and exterior pieces.

\begin{lemma}[Test-class representative]\label{lem:test-class-rep}
For every \(k,\ell\in\mathcal K\),
\begin{equation}
\tag{4.7}\label{eq:4.7}
\boxed{
G_{\chi,k\ell}=\int_{\R}p_k(t)\overline{p_\ell(t)}\nu_{\chi,L}(t)\,dt.}
\end{equation}
The integral is absolutely convergent. Moreover,
\(G_{\chi,\ell k}=\overline{G_{\chi,k\ell}}\). Uniformly for all real \(t\),
\begin{equation}
\tag{4.8}\label{eq:4.8}
\sum_{\chi\ne\chi_0}|\nu_{\chi,L}(t)|^2
 \ll q\{Q+\log(|t|+2)\}^2.
\end{equation}
\end{lemma}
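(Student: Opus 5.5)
The plan is to apply Proposition~\ref{prop:complex-weil} to the product test function
\[
H_{k\ell}(z)=p_k(z)\,\overline{p_\ell(\bar z)}=\widehat\phi(z-\tau_k)\,\overline{\widehat\phi(\bar z-\tau_\ell)} .
\]
First we check that \(H_{k\ell}\) lies in the Paley--Wiener class. Note that \(p_k(z)=\widehat{f_k}(z)\), where \(f_k(u)=\phi(u)e^{-i\tau_k u}\) is supported in \([-L/2,L/2]\). Similarly, \(\overline{p_\ell(\bar z)}=\int\overline{f_\ell(u)}e^{izu}\,du\) is the transform of \(\overline{f_\ell}\). Hence \(H_{k\ell}\) is the transform of the convolution \(\widetilde H_{k\ell}=\frac1{2\pi}\,f_k*\overline{f_\ell}\). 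Indeed, with the convention \eqref{eq:2.11}, \(\widehat{f*g}=\widehat f\,\widehat g\); so the normalization in Proposition~\ref{prop:complex-weil} gives \(\widetilde H=f_k*\overline{f_\ell}\), with no \(2\pi\), and we will recheck this constant. The function \(\widetilde H_{k\ell}\) is \(C_c^\infty\) with support in \([-L,L]\). By \eqref{eq:4.6}, the zero side of \eqref{eq:complex-weil} is exactly \(G_{\chi,k\ell}\).

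Next we treat the right side of \eqref{eq:complex-weil}. On the real axis \(H_{k\ell}(t)=p_k(t)\overline{p_\ell(t)}\), so the archimedean term is \(\int p_k\overline{p_\ell}\,\mu_\chi\,dt\). For the prime term, \(\widetilde H_{k\ell}(\pm\log n)\) vanishes unless \(\log n\le L\), so only \(n\le e^L\) contribute. We then write \(\widetilde H_{k\ell}(\log n)=\frac1{2\pi}\int H_{k\ell}(t)e^{-it\log n}\,dt=\frac1{2\pi}\int p_k\overline{p_\ell}\,n^{-it}\,dt\), and similarly \(\widetilde H_{k\ell}(-\log n)\) gives \(n^{it}\). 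Substituting these, the prime term becomes \(\int p_k\overline{p_\ell}\,P_{\chi,L}\,dt\) by \eqref{eq:4.3}. Adding the two terms yields \eqref{eq:4.7}. Absolute convergence holds because \(\widehat\phi\) is Schwartz on \(\R\) and \(\nu_{\chi,L}(t)\) grows only like \(O_q(\log(|t|+2))\), the prime sum being finite.

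For the Hermitian symmetry we use that \(\nu_{\chi,L}\) is real-valued. The digamma term is a real part, and \(P_{\chi,L}\) is \(-\frac1{2\pi}\sum\cdot\,2\Re(\chi(n)n^{-it})\). Conjugating \eqref{eq:4.7} and swapping \(k,\ell\) then gives \(G_{\chi,\ell k}=\overline{G_{\chi,k\ell}}\).

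For \eqref{eq:4.8} we use \(|\nu|^2\le2|\mu_\chi|^2+2|P_{\chi,L}|^2\). The first part is bounded pointwise: \(\mu_\chi(t)\ll Q+\log(|t|+2)\) by the digamma asymptotic, and summing over the \(q-2\) characters gives \(\ll q(Q+\log(|t|+2))^2\). For the prime part we expand
\[
|P_{\chi,L}|^2\le\frac{2}{4\pi^2}\Bigl(\Bigl|\sum_{n\le X}c_n\chi(n)\Bigr|^2+\Bigl|\sum_{n\le X}\overline{c_n}\,\overline{\chi(n)}\Bigr|^2\Bigr),\qquad c_n=\Lambda(n)n^{-1/2-it},
\]
sum over all characters mod \(q\) (adding the nonnegative principal term), and apply orthogonality. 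Since \(X=q^\lambda<q\), distinct \(n,m\le X\) coprime to \(q\) are incongruent mod \(q\), so only the diagonal survives. This gives
\[
\sum_\chi\Bigl|\sum_{n\le X}c_n\chi(n)\Bigr|^2=(q-1)\sum_{n\le X,\ q\nmid n}\frac{\Lambda(n)^2}{n}\ll q L^2\ll qQ^2 .
\]
The same bound holds for the conjugate sum. Combining the two parts proves \eqref{eq:4.8}.

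The main obstacle is bookkeeping the Fourier and conjugation conventions so that the constants in the \(\widetilde H\leftrightarrow H\) correspondence and the prime term match \eqref{eq:4.3} exactly. This includes verifying that \(\overline{p_\ell(\bar z)}\) corresponds to \(\overline{f_\ell}\) under the paper's sign convention.
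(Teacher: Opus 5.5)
Your proposal follows the paper's proof essentially step for step. You apply Proposition~\ref{prop:complex-weil} to $H_{k\ell}(z)=p_k(z)\overline{p_\ell(\bar z)}$. You use the compact support of $\widetilde H_{k\ell}$ to cut the prime sum at $n\le e^L$ and recognize it, via Fourier inversion, as $\int H_{k\ell}P_{\chi,L}\,dt$. You deduce Hermiticity from the reality of $\nu_{\chi,L}$, and you obtain \eqref{eq:4.8} from the digamma bound together with exact diagonal orthogonality, which holds because $X<q$.

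One correction to the step you flagged. In fact $\overline{p_\ell(\bar z)}=\int\overline{f_\ell(u)}\,e^{-izu}\,du$, which is the transform of $u\mapsto\overline{f_\ell(-u)}$; this equals $f_\ell$ because $\phi$ is real and even. So $\widetilde H_{k\ell}=f_k*f_\ell$ with no factor $2\pi$, which is exactly \eqref{eq:4.11}, and not $f_k*\overline{f_\ell}$. Since you only use the support of $\widetilde H_{k\ell}$ in $[-L,L]$ and the inversion formula, the argument is unaffected.
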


\begin{proof}
Set
\begin{equation}
\tag{4.9}\label{eq:4.9}
H_{k\ell}(t)=p_k(t)\overline{p_\ell(t)},
\qquad
\widetilde H_{k\ell}(u)=\frac1{2\pi}\int_{\R}H_{k\ell}(t)e^{-itu}\,dt.
\end{equation}
Because \(\phi\) is real, the entire extension of the first function is
\begin{equation}
\tag{4.10}\label{eq:4.10}
H_{k\ell}(z)=\widehat\phi(z-\tau_k)
 \overline{\widehat\phi(\bar z-\tau_\ell)}.
\end{equation}
The product--convolution formula in the convention \eqref{eq:2.11} gives
\begin{equation}
\tag{4.11}\label{eq:4.11}
\widetilde H_{k\ell}(u)=\int_{\R}
 \phi(y+u)\phi(y)e^{-i\tau_k(y+u)}e^{i\tau_\ell y}\,dy.
\end{equation}
Both \(y\) and \(y+u\) must lie in the support of \(\phi\). Hence
\begin{equation}
\tag{4.12}\label{eq:4.12}
\supp\widetilde H_{k\ell}
 \subset\supp\phi-\supp\phi\Subset(-L,L).
\end{equation}
In particular, \(\widetilde H_{k\ell}(\pm L)=0\).

Apply Proposition~\ref{prop:complex-weil} to \(H=H_{k\ell}\).  Equations
\eqref{eq:4.11}--\eqref{eq:4.12} verify its exact test-class hypotheses.  In
particular,
\begin{equation}
\tag{4.15}\label{eq:4.15}
\int H_{k\ell}(t)n^{-it}\,dt=2\pi\widetilde H_{k\ell}(\log n),
\qquad
\int H_{k\ell}(t)n^{it}\,dt=2\pi\widetilde H_{k\ell}(-\log n).
\end{equation}
By \eqref{eq:4.12}, the \(n\)-th prime coefficient in
\eqref{eq:complex-weil} vanishes when \(\log n\ge L\).  Thus the prime sum
is exactly \(\int H_{k\ell}(t)P_{\chi,L}(t)\,dt\).  If \(e^L\) is an
integer, its endpoint coefficient is zero, so the notation \(n\le e^L\)
creates no ambiguity.  Proposition~\ref{prop:complex-weil} now proves
\eqref{eq:4.7}.

The two terms in parentheses in \eqref{eq:4.3} are conjugate, so
\(P_{\chi,L}\), and hence \(\nu_{\chi,L}\), is real. Also
\(H_{\ell k}=\overline{H_{k\ell}}\) on the real axis, which proves
Hermiticity. Absolute convergence follows because \(H_{k\ell}\) is Schwartz,
\(P_{\chi,L}\) is a finite trigonometric polynomial, and the uniform
digamma asymptotic DLMF~\cite[(5.11.2)]{DLMF} gives
\begin{equation}
\tag{4.16}\label{eq:4.16}
|\mu_\chi(t)|\ll Q+\log(|t|+2).
\end{equation}
It remains to prove \eqref{eq:4.8}. Write
\[
D_\chi(t)=\sum_{n\le X}\frac{\Lambda(n)}{\sqrt n}\chi(n)n^{-it}.
\]
Then \(P_{\chi,L}(t)=-\pi^{-1}\Re D_\chi(t)\). Since the fixed inequality
\(\lambda<1\) gives \(X<q\), character orthogonality is exactly diagonal:
\begin{equation}
\tag{4.17}\label{eq:4.17}
\sum_{\chi\bmod q}|D_\chi(t)|^2
 =(q-1)\sum_{n\le X}\frac{\Lambda(n)^2}{n}\ll qL^2,
\end{equation}
uniformly in real \(t\). Combining \eqref{eq:4.16}--\eqref{eq:4.17} with
\(|x+y|^2\le2|x|^2+2|y|^2\) proves \eqref{eq:4.8}.
\end{proof}

For later parity bookkeeping, if \(\mu_+\) and \(\mu_-\) denote
\eqref{eq:4.2} for even and odd characters, respectively, then
\begin{equation}
\tag{4.18}\label{eq:4.18}
A_q=\frac{\mu_++\mu_-}{2},\qquad
B=\frac{\mu_+-\mu_-}{2},\qquad
\mu_\chi=A_q+\chi(-1)B.
\end{equation}
The functions \(A_q,B\) are real and even. Uniformly for real \(t\),
\[
 A_q(t)=\frac{Q}{2\pi}+O\!\left(1+\log(|t|+2)\right),
 \qquad B(t)=O(1).
\]
Moreover
\begin{equation}
\tag{4.19}\label{eq:4.19}
\nu_{\bar\chi,L}(t)=\nu_{\chi,L}(-t).
\end{equation}
These identities will be used only in full-family character sums. No
primitive conductor-\(q\) explicit formula will be applied to the principal
character; any principal term introduced later is an explicitly labelled
algebraic bookkeeping device.

\section{Gevrey Gabor compression}

Throughout this and the following two sections, fix \(\eta>0\),
\(A_0>0\), and
\[
0<\lambda<1,\qquad 0<\theta<\frac14,\qquad
0\ne\psi\in G_c^{\mathfrak s}\!\left(-\frac12,\frac12\right),
\qquad \mathfrak s=1+\frac\eta2,
\]
where \(\psi\) is real and even. Let \(q\) tend to infinity through primes,
and let \(T\) vary subject to \eqref{eq:2.1}. Put
\[
Q=\log q,\quad L=\lambda Q,\quad X=e^L=q^\lambda,
\]
\[
I=(T,2T],\quad J=[c_-,c_+]=[(1+\theta)T,(2-\theta)T],
\quad \Delta=|J|=(1-2\theta)T.
\]
All implied constants may depend on \(\eta,A_0,\lambda,\theta,\psi\), but are
uniform over the admissible choices of \(T\).
Our Fourier convention is
\begin{equation}
\tag{5.1}\label{eq:5.1}
\widehat f(t)=\int_{\R}f(u)e^{itu}\,du,
\qquad
f(u)=\frac1{2\pi}\int_{\R}\widehat f(t)e^{-itu}\,dt.
\end{equation}
Set
\[
\phi(u)=\psi(u/L),\qquad v(s)=\psi(s)^2,
\]
and define
\begin{equation}
\tag{5.2}\label{eq:5.2}
\begin{aligned}
a&=\int_{-1/2}^{1/2}v(s)\,ds,
& b&=\int_{-1/2}^{1/2}v(s)^2\,ds,\\
\mathcal J(v)&=\iint_{[-1/2,1/2]^2}|s-s'|v(s)v(s')\,ds\,ds'.
\end{aligned}
\end{equation}
We write \(\Phi=\widehat{\phi^2}\). Since
\(\Phi(r)=L\widehat v(Lr)\), Parseval and a change of variables give
\begin{equation}
\tag{5.3}\label{eq:5.3}
\int_{\R}\Phi(r)^2\,dr=2\pi bL,
\qquad
\int_{\R}|r|\Phi(r)^2\,dr
 =\int_{\R}|x|\,|\widehat v(x)|^2\,dx=O_v(1).
\end{equation}
Choose a translated lattice
\[
\tau_k=\tau_0+\frac{2\pi k}{L},\qquad k\in\Z,
\]
and retain the centres in
\begin{equation}
\tag{5.4}\label{eq:5.4}
\begin{aligned}
\mathcal K&=\{k:\tau_k\in J\},\qquad
 d=|\mathcal K|=\frac{\Delta L}{2\pi}+O(1),\\
 d&\asymp TQ=\mathcal H,\qquad
 d\gg_{\eta,\lambda,\theta}(\log Q)^{1+\eta}.
\end{aligned}
\end{equation}
Put
\begin{equation}
\tag{5.5}\label{eq:5.5}
p_k(t)=\widehat\phi(t-\tau_k),
\qquad
K_d(t,t')=\sum_{k\in\mathcal K}p_k(t)\overline{p_k(t')}.
\end{equation}

\subsection{Poisson summation and finite-centre estimates}

\begin{lemma}[Poisson--Gabor identity]\label{lem:poisson-gabor}
For real \(t,t'\),
\begin{equation}
\tag{5.6}\label{eq:5.6}
\sum_{k\in\Z}p_k(t)\overline{p_k(t')}=L\Phi(t-t').
\end{equation}
In particular,
\begin{equation}
\tag{5.7}\label{eq:5.7}
\sum_{k\in\Z}|p_k(t)|^2=aL^2.
\end{equation}
For complex \(z,z'\), polarization gives the right-hand side
\(L\widehat{|\phi|^2}(z-\overline{z'})\).
\end{lemma}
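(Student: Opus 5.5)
The plan is to recognize the Gabor coefficients \(p_k(t)\) as Fourier coefficients of a single function on an interval of length \(L\), and then apply Parseval's identity for Fourier series. No Poisson summation in the distributional sense is needed. The only structural input is that \(\supp\phi=L\,\supp\psi\Subset(-L/2,L/2)\), an interval of length exactly \(L\), which matches the lattice spacing \(2\pi/L\) of the centres \(\tau_k\).

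\emph{Step 1: coefficient representation.} For real or complex \(z\), set \(F_z(u)=\phi(u)e^{izu}\), viewed as an element of \(L^2(-L/2,L/2)\); it is smooth and bounded there. Put \(e_k(u)=L^{-1/2}e^{i\tau_k u}\). Since \(\tau_k=\tau_0+2\pi k/L\), we have \(e_k(u)=e^{i\tau_0u}\cdot L^{-1/2}e^{2\pi iku/L}\). Multiplication by the unimodular function \(e^{i\tau_0 u}\) is unitary, so \(\{e_k\}_{k\in\Z}\) is an orthonormal basis of \(L^2(-L/2,L/2)\). By the convention \eqref{eq:5.1},
\[
 p_k(z)=\widehat\phi(z-\tau_k)=\int_{-L/2}^{L/2}\phi(u)e^{izu}e^{-i\tau_k u}\,du
 =\sqrt L\,\langle F_z,e_k\rangle ,
\]
where \(\langle f,g\rangle=\int f\bar g\).

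\emph{Step 2: Parseval.} Parseval's identity in polarized form gives
\[
 \sum_{k\in\Z}p_k(z)\overline{p_k(z')}
 =L\sum_k\langle F_z,e_k\rangle\overline{\langle F_{z'},e_k\rangle}
 =L\langle F_z,F_{z'}\rangle
 =L\int_{\R}\phi(u)^2e^{i(z-\overline{z'})u}\,du .
\]
The series converges absolutely by Cauchy--Schwarz and Bessel. For real \(t,t'\) the right side is \(L\Phi(t-t')\), which is \eqref{eq:5.6}. For complex arguments it is \(L\widehat{|\phi|^2}(z-\overline{z'})\), which is the polarized statement; here \(\phi\) is real, so \(|\phi|^2=\phi^2\).

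\emph{Step 3: diagonal.} Setting \(t=t'\) gives
\[
 \sum_k|p_k(t)|^2=L\Phi(0)=L\int\psi(u/L)^2\,du=L^2\int_{-1/2}^{1/2}v=aL^2,
\]
which is \eqref{eq:5.7}.

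The only point requiring care is Step 1. The argument depends on the support of \(\phi\) lying in an interval of length at most \(L\), so that the exponentials \(e^{i\tau_k u}\) form a basis rather than merely a frame. If the support were longer, aliasing terms \(L\sum_{m\ne0}e^{-i\tau_0 mL}\Phi(\,\cdot\,)\) would appear. The choice \(\supp\psi\Subset(-1/2,1/2)\) in \eqref{eq:2.6} excludes them, and the offset \(\tau_0\) then plays no role.
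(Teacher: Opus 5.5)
Your argument is correct, but it takes a different, dual route from the paper's.

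The paper applies Poisson summation on the lattice \(\tau_0+(2\pi/L)\Z\) to the Schwartz function \(\Upsilon(x)=\widehat\phi(t-x)\overline{\widehat\phi(t'-x)}\). Its transform is supported in \(\supp\phi-\supp\phi\Subset(-L,L)\), so only the zero dual mode survives, and that mode is evaluated by Plancherel. The complex polarization is then obtained separately: the lattice series converges normally, and the identity theorem is applied first in \(z\) and then in \(w=\overline{z'}\).

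You stay on the time side. You read \(p_k(z)\) as \(\sqrt L\) times the coefficient of \(\phi(u)e^{izu}\) in the orthonormal basis \(\{L^{-1/2}e^{i\tau_ku}\}\) of \(L^2(-L/2,L/2)\), so the identity is just polarized Parseval. The geometric input is the same in equivalent form: a compact \(S\) satisfies \(S-S\subset(-L,L)\) exactly when it lies in an interval of length less than \(L\). Your version has two advantages:
\begin{itemize}
\item absolute convergence comes from Bessel's inequality and Cauchy--Schwarz, with no decay estimate for \(\widehat\phi\);
\item no analytic continuation is needed, because \(\phi(u)e^{izu}\in L^2(-L/2,L/2)\) for every complex \(z\), so the complex case is literally the same computation.
\end{itemize}
The paper's route is the standard Poisson-summation proof of the tight-frame property for a lattice of centres. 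Yours is shorter and fully self-contained.

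One side remark in your write-up is inaccurate, though the proof does not use it. If the support were longer, the aliasing terms would be
\[
 L\sum_{m\ne0}e^{i(t-\tau_0)mL}\int_{\R}\phi(u+mL)\phi(u)e^{i(t-t')u}\,du .
\]
These are transforms of the overlaps \(\phi(\cdot+mL)\phi\), not multiples of \(\Phi\).
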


\begin{proof}
Set
\(\Upsilon(x)=\widehat\phi(t-x)\overline{\widehat\phi(t'-x)}\).
This is a Schwartz function.  Apply the scaled and translated Poisson
summation formula DLMF~\cite[(1.8.14)]{DLMF} to the lattice
\(\tau_0+(2\pi/L)\Z\).  A direct Fourier calculation shows that the
transform of \(\Upsilon\) is supported in
\(\supp\phi-\supp\phi\Subset(-L,L)\).  The nonzero dual frequencies are
integer multiples of \(L\), so only the zero mode survives.  Plancherel in
the convention \eqref{eq:5.1} gives
\[
 \int_{\R}\Upsilon(x)\,dx
 =2\pi\int_{\R}|\phi(u)|^2e^{i(t-t')u}\,du
 =2\pi\Phi(t-t').
\]
Multiplication by the lattice density \(L/(2\pi)\) proves
\eqref{eq:5.6}.  Putting \(t=t'\) gives \eqref{eq:5.7}. Because \(\phi\)
has compact support, both sides are entire in \(z\) and in the independent variable
\(w=\overline{z'}\). The lattice series, together with all of its
derivatives, converges normally on compact subsets by Schwartz decay in the
lattice index. The real identity therefore extends first in \(z\) and then
in \(w\) by the one-variable identity theorem. Substituting
\(w=\overline{z'}\) proves the asserted complex polarization.
\end{proof}

For every \(A>0\), repeated integration by parts gives the envelope
\begin{equation}
\tag{5.8}\label{eq:5.8}
|p_k(t)|\le\omega_A(t-\tau_k),
\qquad
\omega_A(x)=C_A L(1+L|x|)^{-A}.
\end{equation}
The finite-centre replacements below use the envelope \eqref{eq:5.8}
directly.  This keeps the proof focused on the endpoint sums that actually
enter the first and second moments.

\subsection{The ordinary explicit-formula representative}

For a nonprincipal character \(\chi\pmod q\), let
\(a_\chi=(1-\chi(-1))/2\), and set
\begin{equation}
\tag{5.9}\label{eq:5.9}
\mu_\chi(t)=\frac1{2\pi}\log\frac q\pi
 +\frac1{2\pi}\Re\frac{\Gamma'}{\Gamma}
 \!\left(\frac{1/2+a_\chi+it}{2}\right),
\end{equation}
\begin{equation}
\tag{5.10}\label{eq:5.10}
P_{\chi,L}(t)=-\frac1{2\pi}\sum_{n\le X}\frac{\Lambda(n)}{\sqrt n}
 \bigl(\chi(n)n^{-it}+\overline{\chi(n)}n^{it}\bigr),
\qquad
\nu_{\chi,L}=\mu_\chi+P_{\chi,L}.
\end{equation}
This is an ordinary, bandwidth-dependent function, not a pointwise zero
density. The inverse transform of \(p_k\overline{p_\ell}\) is compactly
supported in \((-L,L)\). The Weil explicit formula therefore yields exactly
\begin{equation}
\tag{5.11}\label{eq:5.11}
G_{\chi,k\ell}=\int_{\R}p_k(t)\overline{p_\ell(t)}
 \nu_{\chi,L}(t)\,dt.
\end{equation}
All integrals are absolutely convergent. Because \(X<q\), character
orthogonality gives, uniformly for real \(t\),
\begin{equation}
\tag{5.12}\label{eq:5.12}
F(t)^2:=\sum_{\chi\ne\chi_0}|\nu_{\chi,L}(t)|^2
 \ll q\{Q+\log(2+|t|)\}^2.
\end{equation}
Indeed the prime part follows from
\begin{equation}
\tag{5.13}\label{eq:5.13}
\sum_{\chi\bmod q}
 \left|\sum_{n\le X}\frac{\Lambda(n)}{\sqrt n}\chi(n)n^{-it}\right|^2
 =(q-1)\sum_{n\le X}\frac{\Lambda(n)^2}{n}\ll qL^2,
\end{equation}
and the gamma part follows from the uniform digamma asymptotic
DLMF~\cite[(5.11.2)]{DLMF}. We shall also use
\begin{equation}
\tag{5.14}\label{eq:5.14}
\sum_{\chi\ne\chi_0}|\nu_{\chi,L}(t)|
 \le\sqrt q\,F(t)\ll q\{Q+\log(2+|t|)\},
\end{equation}
and, crucially for signed products,
\begin{equation}
\tag{5.15}\label{eq:5.15}
\sum_{\chi\ne\chi_0}
 |\nu_{\chi,L}(t)\nu_{\chi,L}(t')|
 \le F(t)F(t').
\end{equation}
We normalize the matrix by
\begin{equation}
\tag{5.16}\label{eq:5.16}
\widehat G_\chi=\frac{G_\chi}{aL^2}.
\end{equation}
The factor \(aL^2\) is the complete-lattice squared norm in
\eqref{eq:5.7}.  With this normalization, an on-line zero contributes a
positive rank-one block of trace at most one.  The first moment of
\(\widehat G_\chi\) measures total zero-side mass, while its Frobenius second
moment is the concentration penalty that enters the rank--trace certificate.

\subsection{Finite-centre replacement}

\begin{proposition}[Finite-centre first moment]\label{prop:finite-first}
One has
\begin{equation}
\tag{5.17}\label{eq:5.17}
\sum_{\chi\ne\chi_0}\tr\widehat G_\chi
 =\sum_{\chi\ne\chi_0}\int_J\nu_{\chi,L}(t)\,dt+O(q).
\end{equation}
In particular, the error is \(o_{\eta,A_0}(qTQ)\), uniformly in \(T\).
\end{proposition}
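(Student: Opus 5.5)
Start from the representative \eqref{eq:5.11} with \(k=\ell\). Summing over \(k\in\mathcal K\) and dividing by \(aL^2\) gives
\[
 \tr\widehat G_\chi=\frac1{aL^2}\int_{\R}K_d(t,t)\,\nu_{\chi,L}(t)\,dt,
 \qquad K_d(t,t)=\sum_{k\in\mathcal K}|p_k(t)|^2 .
\]
The complete-lattice identity \eqref{eq:5.7} says \(\sum_{k\in\Z}|p_k(t)|^2=aL^2\) identically. So the plan is to compare the finite-centre weight \(w(t):=K_d(t,t)/(aL^2)\in[0,1]\) with the indicator \(\one_J(t)\), and to control the discrepancy after summing over characters. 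We write
\[
 \sum_{\chi\ne\chi_0}\tr\widehat G_\chi-\sum_{\chi\ne\chi_0}\int_J\nu_{\chi,L}
 =\int_{\R}\bigl(w(t)-\one_J(t)\bigr)\sum_{\chi\ne\chi_0}\nu_{\chi,L}(t)\,dt .
\]
The family sum is taken inside the integral before any absolute value is applied, so the bound \eqref{eq:5.14} applies; this is legitimate by absolute convergence.

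Next we bound \(|w-\one_J|\) pointwise using the envelope \eqref{eq:5.8}. There are two cases.

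\emph{Inside \(J\).} Here \(1-w(t)=(aL^2)^{-1}\sum_{k\notin\mathcal K}|p_k(t)|^2\). The excluded centres lie outside \(J\) at lattice spacing \(2\pi/L\), so this is at most
\[
 \ll \frac{1}{L}\sum_{j\ge0}\bigl(1+L\,\dist(t,\partial J)+2\pi j\bigr)^{-2A}\cdot L
 \ll \bigl(1+L\,\dist(t,\partial J)\bigr)^{1-2A}.
\]

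\emph{Outside \(J\).} Here \(w(t)=(aL^2)^{-1}\sum_{k\in\mathcal K}|p_k(t)|^2\), which obeys the same bound with \(\dist(t,J)\).

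In both cases \(|w(t)-\one_J(t)|\ll(1+L\,\dist(t,\partial J))^{-B}\) for any fixed \(B\) (take \(A\) large). The boundary \(\partial J\) consists of the two points \(c_\pm\asymp T\).

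Finally, integrate this against \eqref{eq:5.14}:
\[
 \int_{\R}(1+L|t-c_\pm|)^{-B}\,q\{Q+\log(2+|t|)\}\,dt .
\]
Near \(c_\pm\), where \(|t|\le 3T+1\), the logarithm is \(O(\log(T+2))\ll\log Q\ll Q\) because \(T\le Q^{A_0}\). That part contributes \(q\,Q\cdot L^{-1}\ll q\), since \(L=\lambda Q\). In the far region, \(\log(2+|t|)\) grows only logarithmically, while the weight decays like \((L|t|)^{-B}\); with \(B\ge3\) this gives \(O(q)\) as well. Together these give the \(O(q)\) in \eqref{eq:5.17}. By \eqref{eq:2.1}, \(TQ\ge(\log Q)^{1+\eta}\to\infty\), so \(q=o(qTQ)\) uniformly in \(T\), which is the final claim.

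The main obstacle is the uniformity in \(T\) in the near-microscopic regime. There \(T\) can be as small as \(Q^{-1}(\log Q)^{1+\eta}\), so \(J\) has length only a few multiples of \(1/L\), and the two boundary layers of \(J\) are each of width \(1/L\). The argument must therefore avoid any step that treats the interior of \(J\) as long compared with \(1/L\). The estimate above only uses an absolute \(O(1/L)\) per endpoint times the \(O(qQ)\) size of the family sum, with constants depending on \(\psi,\lambda,B\) alone, so it does not rely on the interior being long. It does require checking that \eqref{eq:5.8} holds with constants independent of \(\tau_k\), which is true since \(|p_k(t)|=|\widehat\phi(t-\tau_k)|\) depends only on \(t-\tau_k\).
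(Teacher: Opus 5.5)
Your proposal is correct and is essentially the paper's argument: the paper likewise uses the complete-lattice identity \eqref{eq:5.7} to write the discrepancy as the excluded-centre sum on \(J\) and the retained-centre sum on \(J^c\), bounds each term by the envelope \eqref{eq:5.8}, and integrates against the family bound \eqref{eq:5.14} to get \(O(qQ/L)=O(q)\). The only cosmetic difference is the order of operations: you sum over centres first to get a pointwise bound on \(|w-\one_J|\), whereas the paper integrates each squared envelope and then sums the endpoint tails, which is the same estimate after exchanging sum and integral.
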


\begin{proof}
On \(J\), subtract the finite sum in \eqref{eq:5.7} from the infinite sum;
on \(J^c\), retain the finite sum. By \eqref{eq:5.14}, the absolute error
in \eqref{eq:5.17} is at most
\begin{equation}
\tag{5.18}\label{eq:5.18}
\frac{Cq}{aL^2}
\left\{
 \sum_{k\notin\mathcal K}\int_J\omega_A(t-\tau_k)^2W(t)\,dt
 +\sum_{k\in\mathcal K}\int_{J^c}\omega_A(t-\tau_k)^2W(t)\,dt
\right\},
\end{equation}
where \(W(t)=Q+\log(2+|t|)\). In the first integral \(t\in J\), so
\(W(t)\ll_{A_0} Q\), independently of the omitted centre. In the second
integral \(k\in\mathcal K\), whence \(|\tau_k|\ll T\); after writing
\(x=L(t-\tau_k)\), for every fixed \(\eps>0\),
\begin{equation}
\tag{5.19}\label{eq:5.19}
W(\tau_k+x/L)\ll_{A_0,\eps} Q(1+|x|)^\eps.
\end{equation}
In either integral each squared envelope supplies the factor \(L\). The
centres crossing either endpoint have scaled depths separated by \(2\pi\).
Thus, for \(A>2\), the sum of all resulting tails is \(O(1)\), and the
braces in \eqref{eq:5.18} are \(O(QL)\). Therefore \eqref{eq:5.18} is
\(O(qQ/L)=O(q)\).
\end{proof}

\begin{lemma}[Discrete endpoint tails]\label{lem:discrete-tails}
Let \(F(t)^2=\sum_{\chi\ne\chi_0}|\nu_{\chi,L}(t)|^2\), and for a
measurable set \(U\subset\R\) put
\[
A_k(U)=\int_U\omega_A(t-\tau_k)F(t)\,dt.
\]
If \(A>5/2\), then
\[
\sum_{k\notin\mathcal K}A_k(J)^2\ll qQ^2,
\qquad
\sum_{k\in\mathcal K}A_k(J)A_k(J^c)\ll qQ^2,
\qquad
\sum_{k\in\mathcal K}A_k(J^c)^2\ll qQ^2.
\]
The constants are uniform in the lattice offset \(\tau_0\).
\end{lemma}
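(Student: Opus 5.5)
Since \(A_k(U)\) is linear in the weight \(F\), the plan is to bound \(F\) pointwise through \eqref{eq:5.12}, \(F(t)\ll\sqrt q\,W(t)\) with \(W(t)=Q+\log(2+|t|)\). This reduces all three estimates to deterministic sums over lattice centres of envelope integrals; since every bound then carries the factor \(q\) from \(F^2\), it suffices to show that the resulting \(\tau_0\)-free sums are \(O(Q^2)\). The envelope \(\omega_A(x)=C_AL(1+L|x|)^{-A}\) has \(\int\omega_A=O(1)\). The scaled variable \(x=L(t-\tau_k)\) is the natural one, since centres are spaced by \(2\pi\) in it.

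\emph{First sum (\(k\notin\mathcal K\), \(U=J\)).} For \(t\in J\) we have \(|t|\le2T\le2Q^{A_0}\), so \(W(t)\ll Q\). Let \(\tau_k\) lie outside \(J\) at scaled distance \(D_k=L\,\dist(\tau_k,J)\). Then
\[
A_k(J)\ll\sqrt q\,Q\int_{D_k}^\infty(1+y)^{-A}\,dy\ll\sqrt q\,Q(1+D_k)^{1-A}.
\]
The depths \(D_k\) on each side of \(J\) form a progression with spacing \(2\pi\) (offset in \([0,2\pi)\), uniform in \(\tau_0\)). Hence
\[
\sum_{k\notin\mathcal K}(1+D_k)^{2-2A}\ll\sum_{j\ge0}(1+2\pi j)^{2-2A}=O(1)
\]
whenever \(2A-2>1\), i.e.\ \(A>3/2\), so the first sum is \(\ll qQ^2\).

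\emph{Remaining two sums (\(k\in\mathcal K\), \(U=J^c\)).} Here \(W\) is unbounded on \(J^c\), so I will use \eqref{eq:5.19}. For \(k\in\mathcal K\) we have \(|\tau_k|\le2T\), and with \(t=\tau_k+x/L\),
\[
\log(2+|t|)\le\log(2+2T)+\log(1+|x|/L)\ll Q(1+|x|)^{\eps}.
\]
With \(D_k=L\,\dist(\tau_k,J^c)\) this gives
\[
A_k(J^c)\ll\sqrt q\,Q\int_{D_k}^\infty(1+y)^{\eps-A}\,dy\ll\sqrt q\,Q(1+D_k)^{1+\eps-A}.
\]
Again the \(D_k\) are spaced by \(2\pi\) from each endpoint of \(J\), so \(\sum_{k\in\mathcal K}A_k(J^c)^2\ll qQ^2\) once \(2A-2-2\eps>1\). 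For the mixed sum I bound \(A_k(J)\le A_k(\R)\) on the part near centres; this is where the hypothesis \(A>5/2\) gets used. Directly,
\[
A_k(J)\ll\sqrt q\,Q\int(1+|x|)^{\eps-A}\,dx\ll\sqrt q\,Q
\]
uniformly, and then
\[
\sum_{k\in\mathcal K}A_k(J)A_k(J^c)\ll qQ^2\sum_{k}(1+D_k)^{1+\eps-A},
\]
which converges for \(A>2+\eps\). Choosing \(\eps<1/2\), the condition \(A>5/2\) covers all three cases. Alternatively, the mixed sum follows from Cauchy--Schwarz together with a bound \(\sum_{k\in\mathcal K}A_k(J)^2\), but that sum is of size \(dqQ^2\), which is too large. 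So the direct pointwise route is the one to use.

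The only delicate points are keeping every constant independent of \(\tau_0\) and of \(T\). This works because only the endpoint-depth progressions enter, never \(d\). In the \(J^c\) integrals the logarithmic growth of \(W\) must be absorbed into the envelope decay rather than bounded by a global \(\sup W\), since the latter is infinite. The main obstacle is the mixed sum: one must avoid any estimate that produces a factor of \(d\asymp TQ\), and the uniform bound on \(A_k(J)\) combined with the tail decay of \(A_k(J^c)\) is what achieves this.
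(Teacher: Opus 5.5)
Your proposal is correct and follows the paper's proof essentially step for step. Both arguments use the same pointwise bound $F\ll\sqrt q\,W$ and the same three envelope estimates: $A_k(J)\ll\sqrt q\,Q(1+D_k)^{1-A}$ off $\mathcal K$, and on $\mathcal K$ the uniform bound $A_k(\R)\ll\sqrt q\,Q$ together with a decaying bound for $A_k(J^c)$, followed by the same summation over $2\pi$-spaced endpoint depths. The only difference is that the paper absorbs the logarithm with the fixed factor $(1+|x|)^{1/2}$, which gives exponents $2A-2$, $A-3/2$, $2A-3$ and hence exactly $A>5/2$; your choice of a small $\eps$ shows that $A>2$ would already suffice.
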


\begin{proof}
By \eqref{eq:5.12},
\(F(t)\ll\sqrt q\{Q+\log(2+|t|)\}\). If \(k\notin\mathcal K\), then
\(t\in J\) implies
\(|L(t-\tau_k)|\ge D_k:=L\dist(\tau_k,J)\), and
\(Q+\log(2+|t|)\ll_{A_0} Q\). The change of variables
\(x=L(t-\tau_k)\) therefore gives
\[
A_k(J)\ll\sqrt q\,Q(1+D_k)^{-A+1}.
\]
For \(k\in\mathcal K\), one has \(|\tau_k|\ll T\); the same substitution
over the whole line, using the integrability of
\((1+|x|)^{-A}\log(2+|x|)\), gives
\[
A_k(\R)\ll\sqrt q\,Q.
\]
On \(J^c\), put \(E_k=L\dist(\tau_k,J^c)\). The elementary bound
\[
Q+\log(2+|\tau_k+x/L|)\ll Q(1+|x|)^{1/2}
\]
gives
\[
A_k(J^c)\ll\sqrt q\,Q(1+E_k)^{-A+3/2}.
\]
In scaled coordinates the centres have spacing \(2\pi\). On either side of
each endpoint, \(D_k\) and \(E_k\) therefore dominate sequences of the form
\(2\pi j+O(1)\), with at most two terms at every depth. Hence, for every
\(s>1\),
\[
\sum_{k\notin\mathcal K}(1+D_k)^{-s}
 +\sum_{k\in\mathcal K}(1+E_k)^{-s}\ll_s1.
\]
Squaring the first bound, multiplying the second by the third, and squaring
the third, respectively, reduces the three asserted estimates to the last
display with exponents \(2A-2\), \(A-3/2\), and \(2A-3\). They all
exceed one when \(A>5/2\).
\end{proof}

\begin{proposition}[Finite-centre second moment]\label{prop:finite-second}
One has
\begin{equation}
\tag{5.20}\label{eq:5.20}
\sum_{\chi\ne\chi_0}\tr\bigl((G_\chi/L)^2\bigr)
 =\sum_{\chi\ne\chi_0}\iint_{J^2}\Phi(t-t')^2
  \nu_{\chi,L}(t)\nu_{\chi,L}(t')\,dt\,dt'
 +O(qQ^2).
\end{equation}
Consequently the normalized error, after division by \(a^2L^2\), is
\(O(q)=o_{\eta,A_0}(qTQ)\), uniformly in \(T\).
\end{proposition}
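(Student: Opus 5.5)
We start from the representative \eqref{eq:5.11} and write the trace of the square as a double integral. Since \(G_\chi\) is Hermitian,
\[
\tr(G_\chi^2)=\sum_{k,\ell\in\mathcal K}G_{\chi,k\ell}G_{\chi,\ell k}
=\iint_{\R^2}|K_d(t,t')|^2\,\nu_{\chi,L}(t)\nu_{\chi,L}(t')\,dt\,dt',
\]
with \(K_d\) the finite-centre kernel of \eqref{eq:5.5}. The interchange of the finite \(k,\ell\)-sums with the integrals is justified by absolute convergence, using the envelope \eqref{eq:5.8} and the bound \(|\nu_{\chi,L}(t)|\ll_\chi Q+\log(2+|t|)\).

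For the model kernel we use the complete lattice. By Lemma~\ref{lem:poisson-gabor}, the sum \(K_\infty(t,t')=\sum_{k\in\Z}p_k(t)\overline{p_k(t')}\) equals \(L\Phi(t-t')\). Hence \(|K_\infty|^2/L^2=\Phi(t-t')^2\), which is exactly the integrand of the main term once it is restricted to \(J^2\).

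The comparison is then split into regions. We decompose
\[
|K_d|^2\mathbf 1_{\R^2}-|K_\infty|^2\mathbf 1_{J^2}
=\bigl(|K_d|^2-|K_\infty|^2\bigr)\mathbf 1_{J^2}+|K_d|^2\mathbf 1_{(J^2)^c}.
\]
On \(J^2\) we write \(K_\infty=K_d+R\), where \(R=\sum_{k\notin\mathcal K}p_k\overline{p_k}\). This gives \(|K_\infty|^2-|K_d|^2=2\Re(\overline{K_d}R)+|R|^2\). The second region \((J^2)^c\) is covered by \(J^c\times\R\) and \(\R\times J^c\). Each piece is a sum over two centres \(k,\ell\) of products \(p_k(t)\overline{p_k(t')}\,\overline{p_\ell(t)}p_\ell(t')\). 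In every piece, at least one centre is outside \(\mathcal K\) while the variable is in \(J\), or a centre is in \(\mathcal K\) while the variable is in \(J^c\).

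Next we take absolute values, sum over characters, and decouple. By \eqref{eq:5.15}, the character sum is bounded by \(F(t)F(t')\). The factor \(\omega_A(t-\tau_k)\omega_A(t-\tau_\ell)F(t)\), integrated in \(t\), is controlled by Cauchy--Schwarz or by \(ab\le(a^2+b^2)/2\) in the centre indices. This reduces each error to sums of the shape \(\sum_{k}A_k(U)A_k(U')\) times \(\sum_\ell A_\ell(\cdot)A_\ell(\cdot)\).

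The precise reduction is the main obstacle, because the double sum over \((k,\ell)\) does not factor directly. We would bound
\[
\int\omega_A(t-\tau_k)\omega_A(t-\tau_\ell)F(t)\,dt\le\Bigl(\int\omega_A(t-\tau_k)^{2}F\Bigr)^{1/2}\Bigl(\int\omega_A(t-\tau_\ell)^{2}F\Bigr)^{1/2},
\]
and then apply \(\omega_A^2\le C L\,\omega_{2A}/C_{2A}\) so as to match the quantities \(A_k\) of Lemma~\ref{lem:discrete-tails}. This turns each \((k,\ell)\) error into a product of one sum over centres bounded by a full-line \(A_k(\R)\) and one endpoint-tail sum from Lemma~\ref{lem:discrete-tails}. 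The \(L\)-powers introduced by the squared envelopes should cancel the division by \(L^2\) on the left of \eqref{eq:5.20}, leaving \(O(qQ^2)\).

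The delicate point is the terms in which one centre ranges over all of \(\mathcal K\), a set with \(d\asymp TQ\) elements, without any tail decay. For these we use the off-diagonal decay \((1+L|\tau_k-\tau_\ell|)^{-A}\) coming from the product of the two envelopes. This collapses the \(\ell\)-sum to \(O(1)\) neighbours of \(k\), so each unsummed \(k\in\mathcal K\) is paired with a tail quantity \(A_k(J^c)\) or a quantity \(A_\ell(J)\) with \(\ell\notin\mathcal K\). Lemma~\ref{lem:discrete-tails} then gives the bound \(qQ^2\) rather than \(qQ^2d\).

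Finally, dividing by \(a^2L^2\) gives an error \(O(q)\), since \(Q^2/L^2=\lambda^{-2}\). This is \(o(qTQ)\) because \(TQ\ge(\log Q)^{1+\eta}\to\infty\).
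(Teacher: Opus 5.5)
Your splitting $(|K_d|^2-|K_\infty|^2)\mathbf 1_{J^2}+|K_d|^2\mathbf 1_{(J^2)^c}$ is the paper's. However, the way you treat the resulting $(k,\ell)$ double sums does not work for the cross term as written.

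After summing over characters, $2\Re(\overline{K_d}R)$ on $J^2$ is bounded by $\sum_{k\in\mathcal K}\sum_{\ell\notin\mathcal K}B_{k\ell}(J)^2$, where $B_{k\ell}(U)=\int_U\omega_A(t-\tau_k)\omega_A(t-\tau_\ell)F(t)\,dt$. Your proposed Cauchy--Schwarz in $t$ gives $B_{k\ell}(J)^2\le B_{kk}(J)B_{\ell\ell}(J)$. That yields only $\bigl(\sum_{k\in\mathcal K}B_{kk}(J)\bigr)\bigl(\sum_{\ell\notin\mathcal K}B_{\ell\ell}(J)\bigr)\ll(dL\sqrt q\,Q)(L\sqrt q\,Q)$. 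After division by $L^2$ and then by $a^2L^2$, this is $O(dq)=O(qTQ)$, the size of the main term.

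The off-diagonal factor you invoke in your next paragraph is exactly what this Cauchy--Schwarz step discards, so the two steps cannot be combined. The off-diagonal route does work if you estimate $B_{k\ell}$ directly. Since $(1+L|t-\tau_k|)(1+L|t-\tau_\ell|)\ge1+2\pi|k-\ell|$, for $\ell\notin\mathcal K$ one gets $B_{k\ell}(J)\ll L\sqrt q\,Q\,(1+2\pi|k-\ell|)^{-A/2}(1+D_\ell)^{-A/4}$, with $D_\ell=L\dist(\tau_\ell,J)$. The double sum is then $O(L^2qQ^2)$. But these are two-index quantities, not the $A_k(U)$, so Lemma~\ref{lem:discrete-tails} cannot simply be quoted; its lattice summation must be redone with the extra factor.

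On $(J^2)^c$, Cauchy--Schwarz in $t$ does survive, but only if you keep the pairing $\bigl(\sum_{k\in\mathcal K}\sqrt{B_{kk}(J^c)B_{kk}(\R)}\bigr)^2$ and use pointwise tail bounds. Separating it into a full-line sum times a tail sum, as you describe, again loses a factor $d$.

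The paper never encounters a double sum. Cauchy--Schwarz over the lattice index, together with the tight-frame identity \eqref{eq:5.7}, gives the pointwise bounds $|K_d(t,t')|\le aL^2$ and $|K_\infty(t,t')|\le aL^2$. Hence $\bigl||K_d|^2-|K_\infty|^2\bigr|\le2aL^2|K_{\mathrm{out}}|$ on $J^2$, and $|K_d|^2\le aL^2|K_d|$ off $J^2$. Both error kernels are therefore single-centre sums $\sum_k\omega_A(t-\tau_k)\omega_A(t'-\tau_k)$. These factor in $(t,t')$ and integrate against $F(t)F(t')$ to exactly the three sums $\sum_{k\notin\mathcal K}A_k(J)^2$, $\sum_{k\in\mathcal K}A_k(J)A_k(J^c)$, and $\sum_{k\in\mathcal K}A_k(J^c)^2$ of Lemma~\ref{lem:discrete-tails}.

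The undamped sum over $k\in\mathcal K$ that you flag as the delicate point is just $K_d$, and the frame bound disposes of it in one line. Your version can be completed, but only at the cost of a second, two-index tail lemma.
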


\begin{proof}
Fix a Schwartz exponent \(A>5/2\). Let \(K_\infty=L\Phi\) and
\(K_{\mathrm{out}}=K_\infty-K_d\). From \eqref{eq:5.7} and
Cauchy--Schwarz,
\[
\tr\bigl((G_\chi/L)^2\bigr)
 =\frac1{L^2}\iint_{\R^2}|K_d(t,t')|^2
   \nu_{\chi,L}(t)\nu_{\chi,L}(t')\,dt\,dt'.
\]
Indeed, expand the trace as
\(L^{-2}\sum_{k,\ell}G_{\chi,k\ell}G_{\chi,\ell k}\) and use
finite-dimensional Fubini; absolute convergence of every entry was proved
above. Moreover,
\begin{equation}
\tag{5.21}\label{eq:5.21}
|K_d(t,t')|+|K_\infty(t,t')|\le2aL^2.
\end{equation}
On \(J^2\),
\begin{equation}
\tag{5.22}\label{eq:5.22}
\frac{\bigl||K_d|^2-|K_\infty|^2\bigr|}{L^2}
 \le2a|K_{\mathrm{out}}|
 \le2a\sum_{k\notin\mathcal K}
 \omega_A(t-\tau_k)\omega_A(t'-\tau_k),
\end{equation}
whereas on \((J^2)^c\), where the comparison kernel is zero,
\begin{equation}
\tag{5.23}\label{eq:5.23}
\frac{|K_d|^2}{L^2}
 \le a\sum_{k\in\mathcal K}
 \omega_A(t-\tau_k)\omega_A(t'-\tau_k).
\end{equation}
Define \(A_k(U)=\int_U\omega_A(t-\tau_k)F(t)\,dt\). After applying
\eqref{eq:5.15}, the total absolute error is bounded by
\begin{equation}
\tag{5.24}\label{eq:5.24}
C\sum_{k\notin\mathcal K}A_k(J)^2
 +C\sum_{k\in\mathcal K}
 \{2A_k(J)A_k(J^c)+A_k(J^c)^2\}.
\end{equation}
Lemma~\ref{lem:discrete-tails} shows term by term that the three sums in
\eqref{eq:5.24} are \(O(qQ^2)\).

Finally, finite-dimensional Fubini applied to \eqref{eq:5.11} identifies
the original double integral with \(\sum_\chi\tr((G_\chi/L)^2)\),
proving \eqref{eq:5.20}.
\end{proof}

\section{The prime-side first moment}

Let
\begin{equation}
\tag{6.1}\label{eq:6.1}
\mathcal N_q=\sum_{\chi\ne\chi_0}\mathcal N_\chi(I)
 =\frac{qTQ}{2\pi}\{1+o_{\eta,A_0}(1)\}.
\end{equation}

\begin{proposition}[First moment]\label{prop:first-moment}
For the fixed data above, uniformly for \(T\) satisfying \eqref{eq:2.1},
\begin{equation}
\tag{6.2}\label{eq:6.2}
\sum_{\chi\ne\chi_0}\tr\widehat G_\chi
 =(1-2\theta)\mathcal N_q+o_{\eta,A_0}(\mathcal N_q).
\end{equation}
\end{proposition}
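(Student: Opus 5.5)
By Proposition~\ref{prop:finite-first}, it suffices to evaluate
\[
\frac1{aL^2}\cdot aL^2\sum_{\chi\ne\chi_0}\int_J\nu_{\chi,L}(t)\,dt
=\sum_{\chi\ne\chi_0}\int_J\bigl(\mu_\chi(t)+P_{\chi,L}(t)\bigr)\,dt .
\]
By \eqref{eq:5.7}, the full-lattice diagonal contributes exactly \(aL^2\) per unit of \(\nu\). After the normalization \eqref{eq:5.16}, the right side of \eqref{eq:5.17} is therefore literally \(\sum_\chi\int_J\nu_{\chi,L}\), up to the \(O(q)\) error already recorded there. I would split this into the gamma/conductor part and the prime part, and treat them separately.

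\emph{Archimedean part.} Write \(\mu_\chi=A_q+\chi(-1)B\) as in \eqref{eq:4.18}. Then
\[
\sum_{\chi\ne\chi_0}\int_J\mu_\chi
=(q-2)\int_J A_q+\Bigl(\sum_{\chi\ne\chi_0}\chi(-1)\Bigr)\int_J B .
\]
The character sum equals \(-1\), so the second term is \(O(T)\). Using \(A_q(t)=Q/(2\pi)+O(1+\log(|t|+2))\) on \(J\subset[T,2T]\), the first term becomes
\[
\frac{qQ\Delta}{2\pi}+O\bigl(qT\log(T+2)\bigr)+O(TQ).
\]
Since \(\Delta=(1-2\theta)T\) and \(\mathcal N_q\sim qTQ/(2\pi)\) by \eqref{eq:6.1}, this is \((1-2\theta)\mathcal N_q\) plus a relative error \(O(\log(T+2)/Q)=O_{A_0}(\log Q/Q)=o(1)\).

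\emph{Prime part.} Interchange the finite \(n\)-sum with the character sum. Because \(n\le X=q^\lambda<q\) and \(q\) is prime, the sum \(\sum_{\chi\bmod q}\chi(n)\) vanishes unless \(n\equiv1\pmod q\), which for \(1<n<q\) never happens. Hence
\[
\sum_{\chi\ne\chi_0}\chi(n)=-\chi_0(n)=-1
\]
for every prime power \(n\le X\), since all such \(n\) are coprime to \(q\) (they are \(<q\)); the same holds for \(\overline{\chi(n)}\). Therefore
\[
\sum_{\chi\ne\chi_0}\int_J P_{\chi,L}
=\frac1{2\pi}\sum_{n\le X}\frac{\Lambda(n)}{\sqrt n}\int_J\bigl(n^{-it}+n^{it}\bigr)\,dt .
\]
Bound \(\bigl|\int_J n^{\pm it}\,dt\bigr|\le\min(\Delta,2/\log n)\). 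Using \(\Lambda(n)/\log n\le1\), this gives
\[
\ll\sum_{n\le X}\frac{1}{\sqrt n}\ll X^{1/2}=q^{\lambda/2},
\]
and the crude bound \(\Delta\sum_{n\le X}\Lambda(n)n^{-1/2}\ll Tq^{\lambda/2}\) is of the same quality. Since \(\lambda<1\), either way this is \(o(qTQ)\), because \(q^{\lambda/2}\ll q/Q\) while \(TQ\ge(\log Q)^{1+\eta}\). Combining the two parts with the \(O(q)\) error from \eqref{eq:5.17}, which is \(O(\mathcal N_q/(TQ))=o(\mathcal N_q)\), gives \eqref{eq:6.2}.

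The only delicate point is uniformity at the lower end \(T\approx(\log Q)^{1+\eta}/Q\), where \(\mathcal N_q\) is only \(q(\log Q)^{1+\eta}\) in size. There every error must be \(o(q(\log Q)^{1+\eta})\). The \(O(q)\) replacement error and the \(q^{\lambda/2}\) prime term both meet this, as does the gamma error \(O(qT\log(T+2))=O(qT)\) for small \(T\). I expect no genuine obstacle, but this bookkeeping is what the uniformity claim hinges on.
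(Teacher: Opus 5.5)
Your proof is correct and follows the paper's argument: Proposition~\ref{prop:finite-first} reduces the trace to $\sum_\chi\int_J\nu_{\chi,L}$, the archimedean part gives $q\Delta Q/(2\pi)$ up to a relative $O_{A_0}(\log Q/Q)$ error, and full-family orthogonality for $1<n\le X<q$ leaves only a principal-character correction, which is negligible because $\lambda<1$. The differences are cosmetic: your parity split $\mu_\chi=A_q+\chi(-1)B$ is unnecessary, since the paper bounds each $\mu_\chi$ directly, and integrating $n^{\pm it}$ over $J$ before summing gives you the slightly sharper $O(\sqrt X)$ in place of the paper's pointwise $O(\Delta\sqrt X)$.
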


\begin{proof}
Uniformly on \(J\), the digamma asymptotic
DLMF~\cite[(5.11.2)]{DLMF} gives
\(\mu_\chi(t)=Q/(2\pi)+O_{A_0}(\log Q)\), and hence
\begin{equation}
\tag{6.3}\label{eq:6.3}
\sum_{\chi\ne\chi_0}\int_J\mu_\chi(t)\,dt
 =\frac{q\Delta Q}{2\pi}+O_{A_0}(q\Delta\log Q+\Delta Q).
\end{equation}
For every \(1<n\le X<q\), full-family orthogonality gives
\(\sum_{\chi\bmod q}\chi(n)=0\), and likewise for
\(\overline{\chi(n)}\). Thus the full-family prime contribution vanishes.
Removing the principal term leaves the correction polynomial
\begin{equation}
\tag{6.4}\label{eq:6.4}
P_0^*(t)=-\frac1\pi\Re\sum_{n\le X}\frac{\Lambda(n)}{\sqrt n}n^{-it},
\qquad
|P_0^*(t)|\ll\sqrt X.
\end{equation}
The bound in \eqref{eq:6.4} follows by the triangle inequality and the
second estimate in Lemma~\ref{lem:prime-square-sums}; in particular it is
uniform in real \(t\). Its integral over \(J\) is
\(O(\Delta\sqrt X)=o_{\eta,A_0}(qTQ)\), uniformly in \(T\), since \(\lambda<1\) is fixed.
The error in Proposition~\ref{prop:finite-first} is relatively
\(O((TQ)^{-1})\), while the two errors in \eqref{eq:6.3} are relatively
\(O_{A_0}(\log Q/Q)+O(q^{-1})\). Combining these estimates with
\(\Delta=(1-2\theta)T\) and \eqref{eq:6.1} proves \eqref{eq:6.2}. No primitive
conductor-\(q\) explicit formula is applied to the principal character.
\end{proof}

\section{The prime-side second moment}

We isolate the only prime-number-theorem input used in the moment calculation.

\begin{lemma}[Prime-square sums]\label{lem:prime-square-sums}
Uniformly for \(y\ge3\),
\[
\sum_{n\le y}\frac{\Lambda(n)^2}{n}
 =\frac12(\log y)^2+O(\log y),
\qquad
\sum_{n\le y}\frac{\Lambda(n)}{\sqrt n}\ll\sqrt y.
\]
In particular, the first sum is \(\ll(\log y)^2\).
\end{lemma}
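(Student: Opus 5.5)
The first estimate is the classical Mertens-type asymptotic. The plan is to reduce it to prime powers and then apply partial summation to Mertens' theorem. Since $\Lambda(n)^2=(\log p)^2$ for $n=p^m$, we split
\[
 \sum_{n\le y}\frac{\Lambda(n)^2}{n}
 =\sum_{p\le y}\frac{(\log p)^2}{p}
 +\sum_{\substack{p^m\le y\\ m\ge2}}\frac{(\log p)^2}{p^m}.
\]
The higher prime powers contribute $O(1)$, because $\sum_p(\log p)^2/(p(p-1))$ converges.

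For the primes, write $R(u)=\sum_{p\le u}(\log p)/p-\log u$. Mertens' first theorem gives $R(u)=O(1)$ uniformly for $u\ge2$. Stieltjes integration against $\log u$ then gives
\[
 \sum_{p\le y}\frac{(\log p)^2}{p}=\int_{2^-}^{y}\log u\,d\Bigl(\log u+R(u)\Bigr)
 =\frac12(\log y)^2+R(y)\log y-\int_2^y\frac{R(u)}{u}\,du+O(1).
\]
The remaining terms are $O(\log y)$ since $R$ is bounded. This proves the first claim, uniformly for $y\ge3$, and hence also the bound $\ll(\log y)^2$.

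For the second claim we use Chebyshev's bound $\psi(u)=\sum_{n\le u}\Lambda(n)\ll u$, which suffices here; the prime number theorem is not needed. Partial summation gives
\[
 \sum_{n\le y}\frac{\Lambda(n)}{\sqrt n}=\frac{\psi(y)}{\sqrt y}+\frac12\int_2^y\frac{\psi(u)}{u^{3/2}}\,du
 \ll\sqrt y+\int_2^y u^{-1/2}\,du\ll\sqrt y .
\]

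Neither step presents a real obstacle. The only care needed is bookkeeping at the lower endpoint of the Stieltjes integral and checking that each implied constant is absolute, so that the bounds hold uniformly for $y\ge3$ as the lemma requires when $y=X=q^\lambda$ varies.
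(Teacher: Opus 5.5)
Your proof is correct. Its skeleton matches the paper's: split off the prime powers $p^m$ with $m\ge2$ as $O(1)$, treat $\sum_{p\le y}(\log p)^2/p$ by Stieltjes integration, and prove the second estimate from $\psi(u)\ll u$ and partial summation, which is exactly what the paper does.

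The difference lies in the arithmetic input for the first estimate. The paper writes the prime sum as $\int_{2^-}^{y}(\log t/t)\,d\vartheta(t)$. It then feeds in the prime number theorem with error term, $\vartheta(x)=x+O_A(x(\log x)^{-A})$, taken from Johnston--Yang. You instead integrate $\log u$ against $d\sum_{p\le u}(\log p)/p$ and use only Mertens' first theorem, $R(u)=O(1)$. That theorem itself follows from Chebyshev's bound and Stirling's formula for $\log\lfloor u\rfloor!$.

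Your route is therefore more elementary. It shows that no input of prime-number-theorem strength is needed for the lemma, nor for the weighted partial summation in \eqref{eq:7.6}--\eqref{eq:7.7}, which uses only $E(y)=O(y)$. This holds even though the paper presents the lemma as its prime-number-theorem input.

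The paper's stronger hypothesis buys more than the lemma records. With $A>2$ the boundary term and the remainder integral converge, so the prime sum is in fact $\frac12(\log y)^2+C+o(1)$. Boundedness of $R$ alone gives precisely the $O(\log y)$ that is claimed and used.

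Your bookkeeping at $2^-$ is fine: the jump of $R$ at $u=2$ only adds an $O(1)$ boundary term. All your constants are absolute, so the uniformity for $y\ge3$ holds.
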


\begin{proof}
Johnston--Yang \cite[Theorem~1.1 and Corollary~1.2]{JohnstonYang2023}
give explicit exponentially decaying errors for \(\psi(x)-x\) and
\(\vartheta(x)-x\).  In particular, after enlarging constants on a bounded
range,
\(\vartheta(x)=x+O_A(x(\log x)^{-A})\) for every fixed \(A>0\), and
\(\psi(x)\ll x\) for \(x\ge2\).  Stieltjes partial summation with
\(d\vartheta(t)\) gives
\[
\sum_{p\le y}\frac{(\log p)^2}{p}
 =\int_{2^-}^y\frac{\log t}{t}\,d\vartheta(t)
 =\frac12(\log y)^2+O(\log y).
\]
The terms \(n=p^k\), \(k\ge2\), contribute \(O(1)\), since
\(\sum_p(\log p)^2/(p(p-1))<\infty\). This proves the first assertion. For
the second, Chebyshev's bound \(\psi(x)\ll x\) and partial summation give
\[
\sum_{n\le y}\frac{\Lambda(n)}{\sqrt n}
 =\frac{\psi(y)}{\sqrt y}
  +\frac12\int_1^y\frac{\psi(t)}{t^{3/2}}\,dt
 \ll\sqrt y.
\]
\end{proof}

For real functions \(U,V\), abbreviate
\begin{equation}
\tag{7.1}\label{eq:7.1}
\mathcal M[U,V]
 =\iint_{J^2}\Phi(t-t')^2U(t)V(t')\,dt\,dt'.
\end{equation}

\subsection{The archimedean contribution}

Uniformly on \(J\), one has
\(\mu_\chi(t)=Q/(2\pi)+O_{A_0}(\log Q)\). Hence \eqref{eq:5.3} and the
overlap identity give
\begin{equation}
\tag{7.2}\label{eq:7.2}
\sum_{\chi\ne\chi_0}\mathcal M[\mu_\chi,\mu_\chi]
 =\frac{q\Delta bLQ^2}{2\pi}
  +O_\psi(qQ^2)+O_{\eta,\psi}(q\Delta LQ\log Q).
\end{equation}
Indeed
\(\iint_{J^2}\Phi(t-t')^2\,dt\,dt'=2\pi bL\Delta+O_\psi(1)\), because
\(\int|r|\Phi(r)^2\,dr=O_\psi(1)\); this endpoint error contributes
\(O(qQ^2)\). Replacing one of the two constant gamma factors by its
\(O_{A_0}(\log Q)\) remainder costs
\(O_{\eta,\psi}(q\Delta LQ\log Q)\), and the product of two remainders is smaller.
After normalization, the two displayed errors are relatively
\(O((TQ)^{-1})\) and \(O_{A_0}(\log Q/Q)\), respectively.

\subsection{Expansion of the four prime products}

Write \(a_n=\Lambda(n)/\sqrt n\). The product
\(P_{\chi,L}(t)P_{\chi,L}(t')\) contains the following four terms:
\begin{equation}
\tag{7.3}\label{eq:7.3}
\begin{array}{c@{\qquad}c@{\qquad}c}
\text{character product}&\text{phase}&\text{full-family condition}\\[2pt]
\chi(n)\overline{\chi(m)}&n^{-it}m^{it'}&n\equiv m\pmod q\\
\overline{\chi(n)}\chi(m)&n^{it}m^{-it'}&n\equiv m\pmod q\\
\chi(n)\chi(m)&n^{-it}m^{-it'}&nm\equiv1\pmod q\\
\overline{\chi(n)}\overline{\chi(m)}&n^{it}m^{it'}&nm\equiv1\pmod q.
\end{array}
\end{equation}
Since \(n,m\le X<q\), the first two congruences force \(n=m\).

For one ratio term, writing \(r=t-t'\) gives, up to the endpoint overlap,
\begin{equation}
\tag{7.4}\label{eq:7.4}
\frac{q-1}{4\pi^2}\Delta\sum_{n\le X}a_n^2
 \int_{\R}\Phi(r)^2e^{-ir\log n}\,dr.
\end{equation}
Because \(v\) is even,
\begin{equation}
\tag{7.5}\label{eq:7.5}
\int_{\R}\Phi(r)^2e^{-irx}\,dr
 =2\pi(\phi^2*\phi^2)(x)=2\pi L(v*v)(x/L).
\end{equation}
To make the weighted partial summation explicit, put
\[
 B(y)=\sum_{\log n\le y}\frac{\Lambda(n)^2}{n}
      =\frac12y^2+E(y),\qquad E(y)=O(y),
\]
which is equivalent to
\begin{equation}
\tag{7.6}\label{eq:7.6}
\sum_{n\le y}\frac{\Lambda(n)^2}{n}
 =\frac12(\log y)^2+O(\log y).
\end{equation}
For \(w=v*v\), Stieltjes integration gives
\[
\begin{aligned}
\sum_{n\le X}a_n^2Lw(\log n/L)
 &=\int_{0^-}^{L}Lw(y/L)\,dB(y)\\
 &=L\int_0^L y w(y/L)\,dy
   +[Lw(y/L)E(y)]_{0^-}^{L}\\
 &\hspace{3em}-\int_0^L E(y)w'(y/L)\,dy.
\end{aligned}
\]
Here \(w(1)=0\), \(E(0)=0\), and the last integral is \(O_v(L^2)\).
Consequently
\begin{equation}
\tag{7.7}\label{eq:7.7}
\sum_{n\le X}a_n^2L(v*v)(\log n/L)
 =L^3\int_0^1s(v*v)(s)\,ds+O_v(L^2)
 =\frac{L^3}{2}\mathcal J(v)+O_v(L^2).
\end{equation}
Adding the conjugate ratio term therefore yields
\begin{equation}
\tag{7.8}\label{eq:7.8}
M_{\mathrm{ratio}}
 =\frac{q\Delta L^3}{2\pi}\mathcal J(v)+O_v(qL^2).
\end{equation}
Replacing \(\Delta-|r|\) by \(\Delta\) costs \(O_v(qL^2)\), by \eqref{eq:5.3},
\(\int|r|\Phi(r)^2\,dr=O_v(1)\), and
\(\sum_{n\le X}a_n^2\ll L^2\).

We next treat the two same-sign terms in \eqref{eq:7.3}. If \(n,m\ge2\)
and \(nm\equiv1\pmod q\), then \(nm\ge q+1\). Put \(r=t-t'\). For
fixed \(r\), the common-translate variable
\(u=t'\) ranges over \(J\cap(J-r)\), and the same-sign phase is a unimodular
factor depending on \(r\) times \(e^{-iu\log(nm)}\).  Hence
\begin{equation}
\tag{7.9}\label{eq:7.9}
\left|\int_{J\cap(J-r)}e^{-iu\log(nm)}\,du\right|
 \le\frac2{\log(nm)}\ll Q^{-1}.
\end{equation}
The remaining \(r\)-integral is bounded by
\(\int\Phi(r)^2\,dr=2\pi bL\), so the analytic kernel is
\(O_\lambda(1)\). Inversion modulo \(q\), restricted to \([2,X]\), is a
partial permutation; therefore Cauchy--Schwarz gives
\begin{equation}
\tag{7.10}\label{eq:7.10}
\sum_{\substack{n,m\le X\\nm\equiv1\ (q)}}a_na_m
 \le\sum_{n\le X}a_n^2\ll L^2.
\end{equation}
Both same-sign products together are consequently \(O_\lambda(qL^2)\).

\subsection{Mixed terms and principal subtraction}

Write the exact parity decomposition
\begin{equation}
\tag{7.11}\label{eq:7.11}
\mu_\chi(t)=A_q(t)+\chi(-1)B(t),
\end{equation}
where \(A_q(t)=Q/(2\pi)+O_{A_0}(\log Q)\) and \(B(t)=O(1)\) on \(J\). For sufficiently
large \(q\), \(X<q-1\), and full-family orthogonality gives
\begin{equation}
\tag{7.12}\label{eq:7.12}
\sum_{\chi\bmod q}P_{\chi,L}(t)=0,
\qquad
\sum_{\chi\bmod q}\chi(-1)P_{\chi,L}(t)=0.
\end{equation}
Indeed \(\sum_\chi\chi(-1)\chi(n)=(q-1)\one_{n\equiv-1\ (q)}\), with the
same identity for the conjugate half, and no \(n\le X\) meets that
congruence. Hence all full-family archimedean--prime mixed terms vanish.

The principal character is introduced only as an algebraic bookkeeping term.
Schur localization of \(\Phi(t-t')^2\), together with \eqref{eq:6.4}, gives
\begin{equation}
\tag{7.13}\label{eq:7.13}
\mathcal M[P_0^*,P_0^*]\ll X\Delta L,
\end{equation}
and, with \(\mu_0^*=A_q+B\),
\begin{equation}
\tag{7.14}\label{eq:7.14}
|\mathcal M[\mu_0^*,P_0^*]|+|\mathcal M[P_0^*,\mu_0^*]|
 \ll Q\sqrt X\,\Delta L.
\end{equation}
Since \(\Delta\asymp T\), \(L\asymp Q\), and \(\lambda<1\) is fixed, the
raw second-moment main scale is \(q\Delta LQ^2\asymp qTQ^3\). The ratios of
\eqref{eq:7.13} and \eqref{eq:7.14} to this scale are, respectively,
\[
 O(q^{\lambda-1}Q^{-2}),\qquad
 O(q^{\lambda/2-1}Q^{-1}),
\]
so both terms are \(o(q\Delta LQ^2)\), uniformly in \(T\).

\begin{proposition}[Second moment]\label{prop:second-moment}
Define
\begin{equation}
\tag{7.15}\label{eq:7.15}
c_\lambda(v)=\frac{\lambda a^2}{b+\lambda^2\mathcal J(v)}.
\end{equation}
Then, uniformly for \(T\) satisfying \eqref{eq:2.1},
\begin{equation}
\tag{7.16}\label{eq:7.16}
\sum_{\chi\ne\chi_0}\norm{\widehat G_\chi}_F^2
 =\frac{1-2\theta}{c_\lambda(v)}\mathcal N_q
  +o_{\eta,A_0}(\mathcal N_q).
\end{equation}
\end{proposition}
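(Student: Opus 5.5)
The plan is to assemble \eqref{eq:7.16} from Proposition~\ref{prop:finite-second} and the computations of Section~7. Since \(\widehat G_\chi\) is Hermitian,
\[
\norm{\widehat G_\chi}_F^2=\tr(\widehat G_\chi^2)=\frac{\tr((G_\chi/L)^2)}{a^2L^2}.
\]
Proposition~\ref{prop:finite-second} then reduces the left side of \eqref{eq:7.16} to
\[
\frac{1}{a^2L^2}\sum_{\chi\ne\chi_0}\mathcal M[\nu_{\chi,L},\nu_{\chi,L}]+O(q),
\]
and the \(O(q)\) term is \(o_{\eta,A_0}(\mathcal N_q)\) uniformly in \(T\), because \(\mathcal N_q\asymp qTQ\) and \(TQ\ge(\log Q)^{1+\eta}\to\infty\). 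I then expand \(\nu_{\chi,L}=\mu_\chi+P_{\chi,L}\) bilinearly into \(\mathcal M[\mu_\chi,\mu_\chi]\), the two mixed terms, and \(\mathcal M[P_{\chi,L},P_{\chi,L}]\).

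\textbf{Archimedean part.} By \eqref{eq:7.2},
\[
\sum_{\chi\ne\chi_0}\mathcal M[\mu_\chi,\mu_\chi]=\frac{q\Delta bLQ^2}{2\pi}+o(q\Delta LQ^2).
\]

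\textbf{Mixed terms.} I treat these by the principal bookkeeping device. Assign to \(\chi_0\) the formal data \(\mu_0^*=A_q+B\) and \(P_0^*\); because \(n\le X<q\), the latter is exactly what \eqref{eq:4.3} produces with \(\chi_0(n)=1\). Using the parity decomposition \eqref{eq:7.11}, the nonprincipal sum of \(\mathcal M[\mu_\chi,P_{\chi,L}]\) equals the full-family sum minus the principal term. The full-family sum vanishes by \eqref{eq:7.12}, since \(A_q\) and \(B\) do not depend on \(\chi\). The principal term is \(O(Q\sqrt X\Delta L)=o(q\Delta LQ^2)\) by \eqref{eq:7.14}. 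The other mixed term is handled the same way.

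\textbf{Prime part.} I again write the nonprincipal sum as the full-family sum minus \(\mathcal M[P_0^*,P_0^*]\), and the latter is negligible by \eqref{eq:7.13}. For the full-family sum I expand into the four products of \eqref{eq:7.3}. The two ratio terms give \(M_{\mathrm{ratio}}=q\Delta L^3\mathcal J(v)/(2\pi)+O(qL^2)\) by \eqref{eq:7.4}--\eqref{eq:7.8}. For this step I will check that the factor \((q-1)/(4\pi^2)\), combined with \eqref{eq:7.5} and \eqref{eq:7.7}, indeed yields \(1/(2\pi)\) after adding the conjugate term, and that replacing the overlap length \(\Delta-|r|\) by \(\Delta\) costs only \(O(qL^2)\) via \eqref{eq:5.3}. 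The same-sign terms are \(O(qL^2)\) by \eqref{eq:7.9}--\eqref{eq:7.10}.

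\textbf{Assembly.} Collecting the main terms gives
\[
\frac{q\Delta L}{2\pi}\bigl(bQ^2+L^2\mathcal J(v)\bigr)
=\frac{q\Delta LQ^2}{2\pi}\bigl(b+\lambda^2\mathcal J(v)\bigr).
\]
Dividing by \(a^2L^2=a^2\lambda^2Q^2\) gives
\[
\frac{q\Delta Q}{2\pi}\cdot\frac{b+\lambda^2\mathcal J(v)}{\lambda a^2}
=\frac{q\Delta Q}{2\pi\,c_\lambda(v)}.
\]
Since \(\Delta=(1-2\theta)T\) and \eqref{eq:6.1} gives \(qTQ/(2\pi)=\mathcal N_q(1+o(1))\), this is the main term of \eqref{eq:7.16}.

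\textbf{Main obstacle.} The delicate point is uniformity in \(T\) of the relative errors, not the main terms. After normalization by \(a^2L^2\) and comparison with \(\mathcal N_q\asymp qTQ\), the errors are as follows. The terms \(O(qQ^2)\) and \(O(qL^2)\) are relatively \(O((TQ)^{-1})\). The gamma remainder is relatively \(O_{A_0}(\log Q/Q)\); this uses \(T\le Q^{A_0}\), which gives \(\log(2+|t|)\ll\log Q\) on \(J\). The principal corrections are relatively \(O(q^{\lambda/2-1})\). At the lower end I will verify that the \(O(qQ^2)\) endpoint errors from \eqref{eq:5.3} and from Proposition~\ref{prop:finite-second} carry no hidden factor of \(T^{-1}\) or of \(L\) beyond those recorded. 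Every error term is then \(o_{\eta,A_0}(\mathcal N_q)\) precisely because \(TQ\ge(\log Q)^{1+\eta}\), which is the hypothesis \eqref{eq:2.1}.
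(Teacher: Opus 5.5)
Your proposal is correct and follows the paper's own proof essentially step for step: the reduction through Proposition~\ref{prop:finite-second} and the Hermitian identity \eqref{eq:7.18}, the archimedean term \eqref{eq:7.2}, the mixed and principal terms via \eqref{eq:7.11}--\eqref{eq:7.14}, the ratio and reciprocal prime products \eqref{eq:7.3}--\eqref{eq:7.10}, and division by \(a^2L^2\). One bookkeeping point for the ratio-term check you promise: the \(O_v(L^2)\) remainder in \eqref{eq:7.7} is multiplied by the prefactor \((q-1)\Delta/(2\pi)\) from \eqref{eq:7.4}, so the ratio error is really \(O(q\Delta L^2)+O(qL^2)\) rather than \(O(qL^2)\) (the paper's \eqref{eq:7.8} drops the \(\Delta\) as well), giving relative size \(O(1/Q)+O((TQ)^{-1})\) rather than \(O((TQ)^{-1})\) alone; this is still \(o_{\eta,A_0}(1)\) uniformly in \(T\), so the conclusion is unaffected.
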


\begin{proof}
The finite-centre error in Proposition~\ref{prop:finite-second}, the
endpoint-overlap errors, the ratio prime-number-theorem error, and the
reciprocal terms are all \(O(qQ^2)\). The gamma-factor remainder in
\eqref{eq:7.2} is \(O_{\eta,\psi}(q\Delta LQ\log Q)\), while
\eqref{eq:7.13}--\eqref{eq:7.14} are \(o(q\Delta LQ^2)\), uniformly in \(T\).
Combining \eqref{eq:7.2} and \eqref{eq:7.8}, the raw main term is
\begin{equation}
\tag{7.17}\label{eq:7.17}
\frac{q\Delta LQ^2}{2\pi}\{b+\lambda^2\mathcal J(v)\}.
\end{equation}
Since \(G_\chi\) is Hermitian,
\begin{equation}
\tag{7.18}\label{eq:7.18}
\norm{\widehat G_\chi}_F^2
 =\frac{\tr((G_\chi/L)^2)}{a^2L^2}.
\end{equation}
Divide \eqref{eq:7.17} by \(a^2L^2\), use \(L=\lambda Q\),
\(\Delta=(1-2\theta)T\), and
\(\mathcal N_q\sim qTQ/(2\pi)\). This gives the main term in
\eqref{eq:7.16}. Every raw \(O(qQ^2)\) error becomes \(O(q)\) after
\eqref{eq:7.18}, hence has relative size \(O((TQ)^{-1})\ll(\log Q)^{-1-\eta}\).
The normalized gamma remainder is \(O_{\eta,\psi}(qT\log Q)\), of relative
size \(O_{A_0}(\log Q/Q)\). The principal-character terms have the two
relative bounds displayed above. All errors are therefore
\(o_{\eta,A_0}(\mathcal N_q)\), uniformly in \(T\).
\end{proof}

\section{The zero-side matrix and its inertia certificate}

We retain the Fourier convention
\[
\widehat f(t)=\int_{\R}f(u)e^{itu}\,du,
\qquad
f(u)=\frac1{2\pi}\int_{\R}\widehat f(t)e^{-itu}\,dt.
\]
Fix \(\eta>0\) and \(A_0>0\). Let \(0<\lambda<1\),
\(0<\theta<1/4\), and let
\(0\ne\psi\in G_c^{\mathfrak s}((-1/2,1/2))\),
\(\mathfrak s=1+\eta/2\), be real and even. These parameters
are held fixed while \(q\to\infty\), and \(T\) may vary subject to
\eqref{eq:2.1}. Put
\[
Q=\log q,\qquad L=\lambda Q,\qquad I=(T,2T],
\]
\[
J=[(1+\theta)T,(2-\theta)T],\qquad
\phi(u)=\psi(u/L),
\]
\[
a=\int_{-1/2}^{1/2}\psi(s)^2\,ds,
\qquad
\tau_k=\tau_0+\frac{2\pi k}{L}.
\]
Let \(\mathcal K=\{k:\tau_k\in J\}\). For \(z\in\C\), define the
finite sampled vector
\[
u(z)=\bigl(\widehat\phi(z-\tau_k)\bigr)_{k\in\mathcal K}.
\]
If \(\rho=\beta+i\gamma\) is a nontrivial zero of \(L(s,\chi)\), write
\begin{equation}
\tag{8.1}\label{eq:8.1}
\delta_\rho=\beta-\frac12,
\qquad
z_\rho=\frac{\rho-1/2}{i}=\gamma-i\delta_\rho.
\end{equation}

\begin{lemma}[Same-character zero symmetry]\label{lem:same-character-symmetry}
The multiset of nontrivial zeros of \(L(s,\chi)\) is invariant, preserving
multiplicity, under
\[
\rho\longmapsto1-\overline\rho.
\]
In the coordinate \eqref{eq:8.1}, this map is
\(z_\rho\mapsto\overline{z_\rho}\).
\end{lemma}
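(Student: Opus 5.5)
The plan is to derive the symmetry from the functional equation combined with Schwarz reflection. Since \(q\) is prime, every nonprincipal \(\chi\) is primitive, and the completed function \eqref{eq:4.1} satisfies
\[
\Lambda(s,\chi)=\varepsilon_\chi\Lambda(1-s,\overline\chi),\qquad|\varepsilon_\chi|=1.
\]
This is the same identity already used in the proof of Proposition~\ref{prop:complex-weil}. Separately, the Dirichlet series gives
\[
\overline{L(\bar s,\chi)}=L(s,\overline\chi)
\]
for \(\Re s>1\). Since the gamma factor and \((q/\pi)^{(s+a_\chi)/2}\) are real on the real axis, and \(a_{\overline\chi}=a_\chi\), this extends to
\[
\overline{\Lambda(\bar s,\chi)}=\Lambda(s,\overline\chi)
\]
on all of \(\C\) by analytic continuation and the identity theorem.

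Composing the two identities gives, for all \(s\),
\[
\Lambda(s,\chi)=\varepsilon_\chi\overline{\Lambda(1-\bar s,\chi)}.
\]
So the entire function \(s\mapsto\Lambda(s,\chi)\) and \(s\mapsto\overline{\Lambda(1-\bar s,\chi)}\) agree up to a unimodular constant. The nontrivial zeros of \(L(s,\chi)\) are exactly the zeros of \(\Lambda(s,\chi)\). The gamma factor has no zeros, and its poles at \(s=-a_\chi-2n\) cancel exactly the trivial zeros. There is no pole at \(s=1\) because \(\chi\) is nonprincipal.

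For multiplicity, I would compare Taylor expansions. If \(\Lambda(s,\chi)=c(s-\rho)^m+\cdots\) near \(\rho\), then near \(1-\bar\rho\),
\[
\overline{\Lambda(1-\bar s,\chi)}=\bar c(-1)^m(s-(1-\bar\rho))^m+\cdots.
\]
Hence the zero at \(1-\bar\rho\) has the same order \(m\). The map \(\rho\mapsto1-\bar\rho\) is an involution, so it is a multiplicity-preserving bijection of the multiset of zeros.

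The coordinate statement is then direct. Write \(\rho=\tfrac12+\delta+i\gamma\), so \(1-\bar\rho=\tfrac12-\delta+i\gamma\). In \eqref{eq:8.1} this means \(z_\rho=\gamma-i\delta\) is sent to \(\gamma+i\delta=\overline{z_\rho}\).

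The only step that needs care, and that I would expect a referee to check, is the reflection identity for the completed function: specifically, that the parity \(a_\chi\) and the conductor factor are the same for \(\chi\) and \(\overline\chi\), so that conjugation leaves the archimedean factor unchanged. Once that holds, the rest is formal.
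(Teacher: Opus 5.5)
Your proposal is correct and follows the paper's route. The paper likewise combines the functional equation \(\Lambda(s,\chi)=\varepsilon_\chi\Lambda(1-s,\overline\chi)\) with the reflection \(\Lambda(s,\overline\chi)=\overline{\Lambda(\overline s,\chi)}\), applied in sequence rather than composed into one identity, and then checks \(z_\rho\mapsto\overline{z_\rho}\) directly; your Taylor-expansion multiplicity check and the verification \(a_{\overline\chi}=a_\chi\) make explicit details the paper leaves implicit.
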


\begin{proof}
The primitive functional equation in
DLMF~\cite[(25.15.5)]{DLMF}, written in the normalization \eqref{eq:4.1}, is
\(\Lambda(s,\chi)=\varepsilon_\chi\Lambda(1-s,\overline\chi)\).
It sends a zero \(\rho\) of \(\chi\) to the zero \(1-\rho\) of
\(\overline\chi\), with the same multiplicity. The identity
\(\Lambda(s,\overline\chi)=\overline{\Lambda(\overline s,\chi)}\) then
sends that zero to \(1-\overline\rho\) of the original character. Finally,
\[
\frac{1-\overline\rho-1/2}{i}
 =\overline{\frac{\rho-1/2}{i}}.
\]
\end{proof}

The zero-side Weil summand belonging to \(\rho\) is
\[
u(z_\rho)u(\overline{z_\rho})^*.
\]
Consequently a critical-line zero \(1/2+i\gamma\) of multiplicity \(m\)
contributes the positive semidefinite block
\begin{equation}
\tag{8.2}\label{eq:8.2}
m\,u(\gamma)u(\gamma)^*,
\end{equation}
whereas an off-line orbit \(\{\rho,1-\overline\rho\}\) of common
multiplicity \(m\) contributes
\begin{equation}
\tag{8.3}\label{eq:8.3}
m\{u(z)u(\overline z)^*+u(\overline z)u(z)^*\}.
\end{equation}
The latter block is Hermitian. If \(x=u(z)\), \(y=u(\overline z)\), and
\(B=[x\ y]\), then it equals
\[
B\begin{pmatrix}0&1\\1&0\end{pmatrix}B^*.
\]
Sylvester's law of inertia, after restriction to the range of \(B\), gives
at most one positive and at most one negative eigenvalue. No independence of
\(x\) and \(y\) is asserted or needed.

We shall also use the real tight-frame identity
\[
\sum_{k\in\Z}\widehat\phi(t-\tau_k)
 \overline{\widehat\phi(t'-\tau_k)}
 =L\widehat{|\phi|^2}(t-t')\qquad(t,t'\in\R).
\]
At \(t'=t\) it gives
\[
\sum_{k\in\Z}|\widehat\phi(t-\tau_k)|^2=aL^2.
\]
Restriction to \(\mathcal K\) therefore yields
\begin{equation}
\tag{8.4}\label{eq:8.4}
\frac{\norm{u(t)}_2^2}{aL^2}\le1\qquad(t\in\R).
\end{equation}
For the truncated lattice, \eqref{eq:8.4} is the form used below.

\begin{lemma}[Rank--trace inequality]\label{lem:rank-trace}
Let \(P\succeq0\) be Hermitian with \(\rank P\le r\), and let \(R\) be
Hermitian with \(n_+(R)\le b\). Then
\begin{equation}
\tag{8.2.1}\label{eq:8.2.1}
\norm{P+R}_F^2\ge2\tr P-r+4\tr R-4b.
\end{equation}
\end{lemma}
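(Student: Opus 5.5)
The plan is to reduce \eqref{eq:8.2.1} to a statement about the eigenvalues \(\mu_1\ge\dots\ge\mu_n\) of the Hermitian matrix \(M=P+R\). Write \(\lambda_1(R)\ge\dots\ge\lambda_n(R)\) and \(\pi_1\ge\dots\ge\pi_n\ge0\) for the ordered eigenvalues of \(R\) and \(P\), and note that \(\tr M=\tr P+\tr R\). The right side of \eqref{eq:8.2.1} then equals \(4\tr M-2\tr P-r-4b\). The proof would combine three ingredients, all valid for real \(x\):
\[
x^2\ge2x-1,\qquad x^2\ge4x-4,\qquad x^2\ge4x\quad(x\le0).
\]

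First I would record the inertia input. Since \(P\succeq0\) has rank at most \(r\), Weyl's interlacing for a positive semidefinite perturbation of rank \(\le r\) gives \(\lambda_i(R)\le\mu_i\le\lambda_{i-r}(R)\). Because \(n_+(R)\le b\), this yields \(\mu_i\le0\) for \(i>r+b\), and hence \(n_+(M)\le r+b\).

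Next I would choose an index set \(S\) with \(|S|\le r\). On \(S\) I apply \(\mu^2\ge2\mu-1\); on the remaining positive eigenvalues I apply \(\mu^2\ge4\mu-4\); on the nonpositive ones I apply \(\mu^2\ge4\mu\). This gives
\[
\norm M_F^2\ge4\tr M-2\sum_{i\in S}\mu_i-|S|-4\#\{i\notin S:\mu_i>0\}.
\]
It therefore suffices to find \(S\) with two properties: \(\#\{i\notin S:\mu_i>0\}\le b\), and \(2\sum_{i\in S}\mu_i+|S|\le2\tr P+r\).

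The natural first choice is \(S=\{1,\dots,r\}\), for which the positive count outside \(S\) is at most \(b\) by the previous step. Ky Fan's inequality then gives
\[
\sum_{i\le r}\mu_i\le\tr P+\sum_{i\le r}\lambda_i(R).
\]

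The main obstacle is the leftover term \(2\sum_{i\le r}\lambda_i(R)^+\) in this bound, which the first choice of \(S\) does not absorb. My fallback is to avoid spending \(R\)'s positive directions twice. I would work with the decomposition \(\norm M_F^2=\norm P_F^2+2\tr(PR)+\norm R_F^2\), and split \(R=R_+-R_-\) into its positive and negative spectral parts, with \(\rank R_+\le b\).

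I would then bound the two pieces separately. For the negative part, \(\norm{P-R_-}_F^2\) is handled using \(\tr(PR_-)\ge0\) together with \(\norm{R_-}_F^2\ge4\tr(-R_-)\)-type scalar bounds. The positive part \(R_+\) is handled on its own range by \(x^2\ge4x-4\), costing exactly \(4b\). The remaining cross term \(2\tr(PR_-)\) has the wrong sign for a naive bound. I would absorb it by applying the variational identity
\[
\norm M_F^2=\max_{X=X^*}\bigl\{2\tr(MX)-\norm X_F^2\bigr\}
\]
with a test matrix \(X=2I_V-\Pi\) on a suitable subspace \(V\). Here \(\Pi\) is the orthogonal projection onto the range of \(P\), and \(V\) contains the ranges of \(P\) and \(R_+\), so that \(\tr(MX)\) produces \(\tr P+2\tr R\) up to nonnegative terms. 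Computing \(\norm X_F^2\le r+4b\) under this choice is the step I expect to require the most care.
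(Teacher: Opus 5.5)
Your first route with $S=\{1,\dots,r\}$ fails, as you note. The fallback also fails as specified, but not at the step you flagged. The bound $\norm{X}_F^2\le r+4b$ is routine. What is false is the claim that $\tr(MX)$ equals $\tr P+2\tr R$ up to nonnegative terms. Write $\Pi_V$ for your $I_V$, with $\Pi$ the projection onto $\operatorname{ran}P\subseteq V$ and $X=2\Pi_V-\Pi$. Then
\[
\tr(MX)=\tr P+2\tr(R\Pi_V)-\tr(R\Pi)\ \ge\ \tr P+2\tr R-\tr(R_+\Pi),
\]
and the term $-\tr(R_+\Pi)\le0$ is unbounded. Two examples show the failure:
\begin{itemize}
\item In dimension one with $P=p>0$ and $R=\rho>2$, you get $V=\C$ and $X=1$. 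The variational bound is then only $2p+2\rho-1$, below the required $2p+4\rho-5$.
\item In dimension two take $P=p\,uu^*$ and $R=\rho\,ww^*$ with unit vectors satisfying $0<|\langle u,w\rangle|<1$. Then $\norm{X}_F^2=r+4b$ exactly, so there is no slack, and the bound falls short by $2\rho|\langle u,w\rangle|^2$.
\end{itemize}
The underlying problem is that your $X$ gives weight $1$ to all of $\operatorname{ran}P$, which is exactly where $R_+$ needs weight $2$ whenever the two ranges are not orthogonal. Separately, $\tr(PR_-)\ge0$ is the harmful sign in $\norm{P-R_-}_F^2$, not a tool. The paper absorbs that cross term with the slack $(p-1)^2$ in $\norm{P}_F^2\ge2\tr P-r$. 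It does so after a Ky Fan prefix argument gives $\tr(PR)\ge\sum_ip_iq_i$, with $p_i$ decreasing and $q_i$ increasing, followed by the pairwise check $(p-1)^2+x^2+(4-2p)x\ge0$.

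Your variational idea can be repaired by reversing the weights. Put $W=\operatorname{ran}R_+$, $U=(\operatorname{ran}P+W)\ominus W$, and $X=2\Pi_W+\Pi_U$. Then:
\begin{itemize}
\item $\norm{X}_F^2=4\dim W+\dim U\le4b+r$;
\item $\tr(PX)=\tr P+\tr(P\Pi_W)\ge\tr P$;
\item $\tr(R_+X)=2\tr R_+$;
\item since $R_-\Pi_W=0$, $\tr(R_-X)=\tr(R_-\Pi_U)\le\tr R_-$.
\end{itemize}
Hence
\[
\norm{P+R}_F^2\ \ge\ 2\tr(MX)-\norm{X}_F^2\ \ge\ 2\tr P-r+4\tr R_+-2\tr R_--4b,
\]
which implies \eqref{eq:8.2.1} and is slightly stronger. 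This would be a genuinely shorter proof than the paper's, with no Ky Fan inequality, Abel summation, or case analysis, and the interlacing bound $n_+(M)\le r+b$ is not needed at all. But it requires this different test matrix; the one you propose does not work.
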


\begin{proof}
It is enough to use the actual rank \(r_0\) and positive index \(b_0\).
Let \(p_1\ge\cdots\ge p_{r_0}>0\) be the nonzero eigenvalues of \(P\),
with corresponding orthonormal eigenvectors \(e_i\), and write the
eigenvalues of \(R\) as \(q_1\le\cdots\le q_d\), with orthonormal
eigenvectors \(f_j\).  For fixed \(k\le r_0\), put
\[
 a_j^{(k)}=\sum_{i=1}^k|\langle e_i,f_j\rangle|^2.
\]
These numbers satisfy \(0\le a_j^{(k)}\le1\) and
\(\sum_ja_j^{(k)}=k\), because they are diagonal entries of an orthogonal
projection in the \(f_j\)-basis.  Since the \(q_j\) are increasing, moving
any available mass from a larger index to a smaller one cannot increase
\(\sum_jq_ja_j^{(k)}\).  Hence its minimum under these constraints is
\(\sum_{j=1}^kq_j\), and therefore
\[
\sum_{i=1}^k\langle Re_i,e_i\rangle
 =\sum_{j=1}^dq_ja_j^{(k)}\ge\sum_{j=1}^kq_j
 \qquad(1\le k\le r_0).
\]
Thus the needed Ky Fan prefix inequality is proved here. Abel summation,
using \(p_k-p_{k+1}\ge0\), now gives the oppositely ordered
trace bound
\[
\tr(PR)\ge\sum_{i=1}^{r_0}p_iq_i.
\]
It follows that the difference between the left side of
\eqref{eq:8.2.1} and its right side with \(r_0,b_0\) is bounded below by
\[
 \sum_{i\le r_0}
 \{p_i^2+2p_iq_i-2p_i+1+q_i^2-4q_i
      +4\one_{q_i>0}\}
 +\sum_{i>r_0}\{q_i^2-4q_i+4\one_{q_i>0}\}.
\]
We compare these summands eigenvalue by eigenvalue. An unpaired
\(q\le0\) contributes \(q^2-4q\ge0\), while an
unpaired \(q>0\) contributes \(q^2-4q+4=(q-2)^2\ge0\). A paired
\((p,q)\) contributes
\begin{equation}
\tag{8.2.2}\label{eq:8.2.2}
p^2+2pq-2p+1+q^2-4q+4\one_{q>0}.
\end{equation}
If \(q=-x\le0\), this is
\[
(p-1)^2+x^2+(4-2p)x.
\]
Its minimum for \(x\ge0\) is \((p-1)^2\) when \(p\le2\), and
\(2p-3\) when \(p\ge2\); both are nonnegative. If \(q>0\), putting
\(s=p+q\) shows that \eqref{eq:8.2.2} is
\[
s^2-2p-4q+5\ge s^2-4s+5=(s-2)^2+1.
\]
Summing proves the result with \(r_0,b_0\). Replacing them by the stated
upper bounds \(r\ge r_0\) and \(b\ge b_0\) only decreases the right-hand
side, proving the asserted form.
\end{proof}

Let \(A_\chi\) denote the sum of the blocks \eqref{eq:8.2}--\eqref{eq:8.3}
whose common ordinate \(\gamma\) belongs to \(I=(T,2T]\), and put
\(\widehat A_\chi=A_\chi/(aL^2)\).  For a real ordinate define
\[
 r_\gamma=\frac{\|u(\gamma)\|_2^2}{aL^2},
 \qquad 0\le r_\gamma\le1
\]
by \eqref{eq:8.4}.  A critical-line zero of multiplicity \(m\) contributes
a positive semidefinite block with positive index at most one and normalized
trace \(mr_\gamma\).  For an off-line orbit, with
\(x=u(z)\) and \(y=u(\overline z)\), the normalized trace is
\[
 \frac{2m\operatorname{Re}\langle y,x\rangle}{aL^2},
\]
which has no fixed sign, while the positive index is still at most one.

Partition the distinct zeros in \(I\) into \(s_1\) simple zeros on the
critical line, \(s_2\) distinct multiple zeros on the critical line of multiplicities
\(m_j\ge2\), and \(p\) distinct off-line functional-equation orbits of
common multiplicities \(n_\ell\ge1\).  The involution
\(\rho\mapsto1-\overline\rho\) preserves the ordinate and multiplicity.
If an off-line point satisfied \(\rho=1-\overline\rho\), then
\(\Re\rho=1/2\), a contradiction.  Hence every off-line orbit contains
exactly two distinct zero points, both belonging to \(I\).  Therefore
\begin{equation}
\tag{8.5}\label{eq:8.5}
\begin{aligned}
\mathcal N_\chi(I)&=s_1+\sum_jm_j+2\sum_\ell n_\ell,\\
\mathcal N^s_{0,\chi}(I)&=s_1,\\
\mathcal N^*_{0,\chi}(I)&=s_1+s_2,\\
\mathcal N_{d,\chi}(I)&=s_1+s_2+2p.
\end{aligned}
\end{equation}

\begin{proposition}[Certificates for simple and distinct zeros]\label{prop:zero-certificate}
For every nonprincipal \(\chi\pmod q\), writing
\[
 M_\chi=4\tr\widehat A_\chi-\|\widehat A_\chi\|_F^2,
\]
one has
\begin{align}
\mathcal N^s_{0,\chi}(I)
 &\ge M_\chi-2\mathcal N_\chi(I),
 \tag{8.3.1}\label{eq:8.3.1}\\
\mathcal N^*_{0,\chi}(I)
 &\ge M_\chi-2\mathcal N_\chi(I),
 \tag{8.3.1a}\label{eq:8.3.1a}\\
\mathcal N_{d,\chi}(I)
 &\ge\frac12\{M_\chi-\mathcal N_\chi(I)\}.
 \tag{8.3.1b}\label{eq:8.3.1b}
\end{align}
\end{proposition}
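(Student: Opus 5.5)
The plan is to deduce all three inequalities from a single upper bound on \(M_\chi\), obtained by applying Lemma~\ref{lem:rank-trace} once to \(\widehat A_\chi\). Fix \(\chi\) and use the partition behind \eqref{eq:8.5}: \(s_1\) simple critical-line zeros, \(s_2\) distinct multiple critical-line zeros with multiplicities \(m_j\ge2\), and \(p\) off-line orbits with multiplicities \(n_\ell\ge1\). Since only finitely many zeros have ordinate in \(I\), \(\widehat A_\chi\) is a finite sum of the normalized blocks \eqref{eq:8.2}--\eqref{eq:8.3}.

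Split \(\widehat A_\chi=P+R\) as follows:
\begin{itemize}
\item \(P\) is the sum of the normalized blocks \(u(\gamma)u(\gamma)^*/(aL^2)\) over the \(s_1\) simple critical-line zeros;
\item \(R\) collects the blocks \(m_j u(\gamma_j)u(\gamma_j)^*/(aL^2)\) of the multiple critical-line zeros and all the off-line orbit blocks \eqref{eq:8.3}, divided by \(aL^2\).
\end{itemize}
Then \(P\succeq0\) and \(\rank P\le s_1\). By \eqref{eq:8.4}, \(\tr P=\sum r_\gamma\le s_1\). Each block in \(R\) is Hermitian with positive index at most one. For the multiple-zero blocks this holds because they are positive semidefinite of rank one; for the orbit blocks it is the Sylvester inertia remark following \eqref{eq:8.3}.

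Subadditivity of the positive index then gives \(n_+(R)\le s_2+p\). To justify it, argue by contradiction: if \(n_+(R_1+R_2)>n_+(R_1)+n_+(R_2)\), then the positive eigenspace of \(R_1+R_2\) would meet the intersection of the nonpositive eigenspaces of \(R_1\) and \(R_2\) nontrivially, which is impossible. This is the only point needing a small supplementary argument, and it is routine; no independence of the sampled vectors is required.

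Lemma~\ref{lem:rank-trace} with \(r=s_1\) and \(b=s_2+p\) gives
\[
\|\widehat A_\chi\|_F^2\ge2\tr P-s_1+4\tr R-4(s_2+p).
\]
Hence
\[
M_\chi=4\tr P+4\tr R-\|\widehat A_\chi\|_F^2\le2\tr P+s_1+4(s_2+p)\le3s_1+4s_2+4p.
\]
Note that the sign-indefinite trace of the orbit blocks cancels exactly here, so no control of \(\operatorname{Re}\langle y,x\rangle\) is needed.

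Compare this with \eqref{eq:8.5}. Since \(m_j\ge2\) and \(n_\ell\ge1\), we have \(\mathcal N_\chi(I)\ge s_1+2s_2+2p\). Therefore
\[
M_\chi-2\mathcal N_\chi(I)\le s_1=\mathcal N^s_{0,\chi}(I)\le s_1+s_2=\mathcal N^*_{0,\chi}(I),
\]
which gives \eqref{eq:8.3.1} and \eqref{eq:8.3.1a}. Similarly,
\[
\tfrac12\{M_\chi-\mathcal N_\chi(I)\}\le\tfrac12(2s_1+2s_2+2p)=s_1+s_2+p\le\mathcal N_{d,\chi}(I),
\]
which gives \eqref{eq:8.3.1b}.

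I expect the main obstacle to be the bookkeeping rather than any single estimate. The steps that need care are:
\begin{itemize}
\item putting the multiple critical-line zeros into \(R\), not \(P\), so that the coefficient \(4\) applied to their trace is absorbed;
\item checking that each off-line orbit, having two distinct points with the same ordinate, is counted exactly once in \(b\) and twice in \(\mathcal N_{d,\chi}\).
\end{itemize}
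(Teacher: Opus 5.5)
Your proposal is correct and follows essentially the same route as the paper. You use the same split of \(\widehat A_\chi\) into the simple critical-line part \(P\) and the remainder \(R\), the same subadditivity of \(n_+\) giving \(n_+(R)\le s_2+p\), Lemma~\ref{lem:rank-trace} to obtain \(M_\chi\le 3s_1+4s_2+4p\), and then the counting identities \eqref{eq:8.5}; your comparison via \(\mathcal N_\chi(I)\ge s_1+2s_2+2p\) is just a repackaging of the paper's three displayed inequalities.
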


\begin{proof}
Put the simple critical-line blocks into \(P\) and all remaining blocks
into \(R\). By \eqref{eq:8.4},
\[
P\succeq0,
\qquad
\rank P\le s_1,
\qquad
\tr P\le s_1.
\]
The positive index is subadditive under Hermitian addition: if
\(n_+(R_j)\le b_j\), intersecting nonpositive subspaces of codimensions
\(b_j\) gives \(n_+(\sum_jR_j)\le\sum_jb_j\).  Each multiple
critical-line block and each off-line orbit block has positive index at most
one, so \(n_+(R)\le s_2+p\). Lemma~\ref{lem:rank-trace} therefore implies
\begin{equation}
\tag{8.3.2}\label{eq:8.3.2}
M_\chi\le3s_1+4s_2+4p.
\end{equation}
The four counting categories in \eqref{eq:8.5} now give
\[
\begin{aligned}
s_1+2\mathcal N_\chi(I)
 &=3s_1+2\sum_jm_j+4\sum_\ell n_\ell
 \ge3s_1+4s_2+4p,\\
\mathcal N^*_{0,\chi}(I)+2\mathcal N_\chi(I)
 &=3s_1+s_2+2\sum_jm_j+4\sum_\ell n_\ell
 \ge3s_1+4s_2+4p,\\
2\mathcal N_{d,\chi}(I)+\mathcal N_\chi(I)
 &=3s_1+2s_2+4p+\sum_jm_j+2\sum_\ell n_\ell
 \ge3s_1+4s_2+4p.
\end{aligned}
\]
Combining these inequalities with \eqref{eq:8.3.2} proves
\eqref{eq:8.3.1}--\eqref{eq:8.3.1b}.
\end{proof}

\section{Gevrey complex sampling and the exterior zero tail}

We retain the good--bad terminology and the exceptional set \(\mathcal B_q\)
from Lemma~\ref{lem:exceptional}. The complex polarization of the infinite lattice identity is
\begin{equation}
\tag{9.1}\label{eq:9.1}
\sum_{k\in\Z}\widehat\phi(z-\tau_k)
 \overline{\widehat\phi(z'-\tau_k)}
 =L\widehat{|\phi|^2}(z-\overline{z'}).
\end{equation}
For complex sampling we use the Gevrey derivative bounds in
\eqref{eq:gevrey-definition} to obtain quantitative Fourier--Laplace decay.

\begin{lemma}[Gevrey complex sampled-vector bound]
\label{lem:complex-vector}
Let \(\mathfrak s=1+\eta/2\). There exist constants \(c,C>0\), depending
only on the fixed window and the lattice spacing, such that, uniformly for
\(r,\delta\in\R\),
\begin{equation}
\tag{9.1.1}\label{eq:9.1.1}
\norm{u(r-i\delta)}_2^2
 \le C L^2e^{L|\delta|}
 \exp\!\left[-c\{L\dist(r,J)\}^{1/\mathfrak s}\right].
\end{equation}
Consequently,
\begin{equation}
\tag{9.1.2}\label{eq:9.1.2}
\norm{u(r-i\delta)}_2\norm{u(r+i\delta)}_2
 \le C L^2e^{L|\delta|}
 \exp\!\left[-c\{L\dist(r,J)\}^{1/\mathfrak s}\right].
\end{equation}
\end{lemma}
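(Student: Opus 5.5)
The plan is to prove a pointwise Fourier--Laplace bound for \(\widehat\phi\) in the complex strip and then sum it over the finite lattice \(\mathcal K\).

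\textbf{Step 1: a pointwise bound for \(\widehat\psi\).} For \(w=x-iy\) with \(x,y\) real, \(\widehat\psi(w)=\int_{-1/2}^{1/2}\psi(s)e^{ixs}e^{ys}\,ds\), so trivially \(|\widehat\psi(w)|\le\norm\psi_1e^{|y|/2}\). Integrating by parts \(m\) times and using \eqref{eq:gevrey-definition} gives
\[
|\widehat\psi(w)|\le |w|^{-m}\int|\psi^{(m)}(s)|e^{ys}\,ds\le C_\psi R_\psi^m(m!)^{\mathfrak s}e^{|y|/2}|w|^{-m}.
\]
All boundary terms vanish because \(\supp\psi\subset(-1/2,1/2)\). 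Using \(m!\le m^m\), we optimize in \(m\) by taking \(m\approx(|w|/(eR_\psi))^{1/\mathfrak s}\). This is the standard Gevrey step and yields
\[
|\widehat\psi(x-iy)|\le C e^{|y|/2}\exp\bigl(-c'|x|^{1/\mathfrak s}\bigr),
\]
which holds uniformly, since \(|w|\ge|x|\).

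\textbf{Step 2: rescale and square.} Since \(\phi(u)=\psi(u/L)\), we have \(\widehat\phi(z)=L\widehat\psi(Lz)\). Hence
\[
|\widehat\phi(r-\tau_k-i\delta)|^2\le C L^2e^{L|\delta|}\exp\bigl(-2c'\{L|r-\tau_k|\}^{1/\mathfrak s}\bigr).
\]
Squaring the factor \(e^{L|\delta|/2}\) is exactly what produces \(e^{L|\delta|}\). The only real content of the lemma is that the quantitative decay is subexponential and not merely of order \((1+L|x|)^{-A}\); that is, we need the Gevrey optimization of Step 1 and not just Schwartz decay as in \eqref{eq:5.8}. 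The constants depend only on \(C_\psi,R_\psi,\mathfrak s\).

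\textbf{Step 3: sum over the lattice.} For \(k\in\mathcal K\) we have \(\tau_k\in J\), so \(|r-\tau_k|\ge\dist(r,J)\). The scaled points \(L(r-\tau_k)\) are spaced by \(2\pi\), so at most two of them lie at depth \(2\pi j+O(1)\) beyond \(D:=L\dist(r,J)\). Using the subadditivity-type inequality \((D+x)^{1/\mathfrak s}\ge 2^{1/\mathfrak s-1}(D^{1/\mathfrak s}+x^{1/\mathfrak s})\), valid for \(0<1/\mathfrak s\le1\), we split \(\exp(-2c'(\cdot)^{1/\mathfrak s})\) into a factor \(\exp(-c\,D^{1/\mathfrak s})\) times a factor whose sum over \(j\) is a convergent series, bounded by a constant depending only on \(\mathfrak s\), \(c'\) and the spacing. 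This proves \eqref{eq:9.1.1}. This handling of the lattice sum is the only mildly delicate point: one must keep the constant in the exponent uniform in \(r\) (including \(r\in J\), where \(D=0\)) and independent of \(d\). For that reason we bound the count of lattice points by depth, not by \(d\).

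\textbf{Step 4: the product bound.} Since \(\dist(r,J)\) does not depend on the sign of \(\delta\), \eqref{eq:9.1.1} holds for both \(u(r-i\delta)\) and \(u(r+i\delta)\). Taking the geometric mean of the two bounds gives \eqref{eq:9.1.2}.
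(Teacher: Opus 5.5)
Your proposal is correct and follows essentially the same route as the paper: $m$-fold integration by parts against the full complex exponential using the Gevrey bounds, the choice $m\asymp(L|r-\tau|)^{1/\mathfrak s}$ after the rescaling $\widehat\phi(z)=L\widehat\psi(Lz)$, a sum over the $2\pi$-spaced scaled lattice, and the geometric mean of the bounds for $\pm\delta$. The only cosmetic difference is in the lattice sum: you extract the factor $e^{-cD^{1/\mathfrak s}}$ using the concavity inequality $(D+x)^{\alpha}\ge2^{\alpha-1}(D^{\alpha}+x^{\alpha})$, whereas the paper compares the sum with an integral and absorbs the resulting polynomial factor in $D$ by weakening the constant.
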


\begin{proof}
By \eqref{eq:gevrey-definition}, after increasing the constants if
necessary,
\[
 \|\psi^{(m)}\|_1\le C_0R_0^m(m!)^{\mathfrak s}
 \qquad(m\ge0).
\]
For \(w=r-i\delta-\tau\), the substitution \(u=Ls\) and \(m\) integrations
by parts against the whole complex exponential give
\[
 |\widehat\phi(w)|
 \le L e^{L|\delta|/2}
 \min\left\{\|\psi\|_1,
 \frac{C_0R_0^m(m!)^{\mathfrak s}}{(L|w|)^m}\right\}.
\]
Since \(|w|\ge |r-\tau|\), the denominator may be weakened to
\(L|r-\tau|\). Using \(m!\le m^m\) and taking
\(m=\lfloor c_0(L|r-\tau|)^{1/\mathfrak s}\rfloor\), with \(c_0>0\)
sufficiently small in terms of \(R_0\), yields
\begin{equation}
\tag{9.1.3}\label{eq:9.1.3}
 |\widehat\phi(r-i\delta-\tau)|
 \le C_1Le^{L|\delta|/2}
 \exp\!\left[-c_1\{L|r-\tau|\}^{1/\mathfrak s}\right].
\end{equation}
The trivial estimate covers the bounded range omitted by the integer choice
of \(m\).

The scaled lattice points \(L\tau_k\) have spacing \(2\pi\). If
\(D=L\dist(r,J)\), then comparison with an integral gives
\[
 \sum_{k\in\mathcal K}
 \exp\!\left[-2c_1\{L|r-\tau_k|\}^{1/\mathfrak s}\right]
 \le C_2e^{-c_2D^{1/\mathfrak s}}.
\]
Indeed the left side is bounded for \(D\le1\); for \(D>1\), the integral
tail is a polynomial in \(D\) times \(e^{-2c_1D^{1/\mathfrak s}}\), and
that polynomial is absorbed by weakening the exponential constant. Squaring
\eqref{eq:9.1.3} and summing proves \eqref{eq:9.1.1}. Applying the same
estimate to both signs of \(\delta\) and taking square roots proves
\eqref{eq:9.1.2}.
\end{proof}

Lemma~\ref{lem:local-zero-count}, deduced from the symmetric zero-count
theorem of Bennett--Martin--O'Bryant--Rechnitzer, gives uniformly for
primitive characters and real \(t\),
\begin{equation}
\tag{9.2}\label{eq:9.2}
\#\{\rho:|\Im\rho-t|\le1\}\ll\log(q(|t|+3)),
\end{equation}
with multiplicity. The trace norm of a rank-one operator is
\(\norm{xy^*}_1=\norm x_2\norm y_2\). An off-line orbit has two rank-one
summands; the resulting factor two is absorbed in the implied constant.
Lemma~\ref{lem:complex-vector} therefore bounds an on-line block or a full
off-line orbit block of multiplicity \(m\) by
\begin{equation}
\tag{9.3}\label{eq:9.3}
 \ll mL^2e^{L|\delta|}
 \exp\!\left[-c\{L\dist(\gamma,J)\}^{1/\mathfrak s}\right].
\end{equation}
For fixed \(q\), \(|\delta|<1/2\); the exponential decay in
\(|\gamma|^{1/\mathfrak s}\), together with \eqref{eq:9.2}, shows that
the sum of these trace norms over all orbits converges. Hence the zero-side
matrix is absolutely convergent in trace norm and may be grouped
into functional-equation orbits and split according to \(\gamma\in I\).

Define \(E_\chi\) to be the trace-norm sum over the complementary set
\(\gamma\notin I\). Thus, with the endpoint convention fixed above,
\begin{equation}
\tag{9.4}\label{eq:9.4}
E_\chi=G_\chi-A_\chi.
\end{equation}

\begin{proposition}[Good-character Gevrey zero tail]
\label{prop:good-tail}
Fix \(B>0\) after \(\eta,A_0,\lambda,\theta,\psi\). Uniformly for good
characters and for \(T\) satisfying \eqref{eq:2.1},
\begin{equation}
\tag{9.2.1}\label{eq:9.2.1}
\norm{\frac{E_\chi}{aL^2}}_1
 \ll_{\eta,A_0,B,\lambda,\theta,\psi}Q^{-B}.
\end{equation}
\end{proposition}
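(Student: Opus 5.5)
We sum the per-orbit trace-norm bound \eqref{eq:9.3} over all orbits with \(\gamma\notin I\). After normalization by \(aL^2\), each orbit of multiplicity \(m\) and offset \(\delta\) contributes
\[
 \ll m\,e^{L|\delta|}\exp\!\left[-c\{L\dist(\gamma,J)\}^{1/\mathfrak s}\right].
\]
Since \(\gamma\notin I\) and \(J=[(1+\theta)T,(2-\theta)T]\), we have \(\dist(\gamma,J)\ge\theta T+\dist(\gamma,I)\). We split the zeros into two classes: those with \(|\delta|<\delta_0=24\log Q/Q\) (near the line), and those with \(|\delta|\ge\delta_0\), which for a good character must satisfy \(|\gamma|>q^{|\delta|/24}\).

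\emph{Near-line zeros.} Here \(e^{L|\delta|}\le e^{24\lambda\log Q}=Q^{24\lambda}\), a fixed power of \(Q\). We decompose ordinates into unit shells \(H<|\gamma|\le H+1\) together with the bounded region, and count each shell with multiplicity by \eqref{eq:3.12}, getting \(\ll Q+\log(H+3)\). Every zero has scaled distance \(L\dist(\gamma,J)\ge L\theta T\gg\theta\mathcal H\ge\theta(\log Q)^{1+\eta}\). Since \(1/\mathfrak s=1/(1+\eta/2)\), we have \((1+\eta)/\mathfrak s>1\), and therefore
\[
 \exp\!\left[-c(\theta\lambda TQ)^{1/\mathfrak s}\right]\le\exp\!\left[-c'(\log Q)^{(1+\eta)/(1+\eta/2)}\right]=O(Q^{-B'})
\]
for every \(B'\). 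This is exactly where the Gevrey exponent was tuned to \(\eta\).

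The near-\(J\) contribution, from \(|\gamma|\le 3T+1\), involves \(O(T+1)\) shells, each with \(O(Q+\log T)\) zeros. This gives a polynomial factor \(Q^{O(A_0)}\), which is absorbed by choosing \(B'\) large. For the remote shells \(|\gamma|\ge 3T\), the decay \(\exp[-c(L|\gamma|/2)^{1/\mathfrak s}]\) beats the \(\log(H+3)\) growth, and summing over \(H\) is harmless. We do need \(LH\ge L\cdot 3T\gg(\log Q)^{1+\eta}\), so again superpolynomial decay holds uniformly. Negative ordinates are at distance \(\ge T+|\gamma|\) from \(J\) and are handled in the same way.

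\emph{Off-line zeros (main obstacle).} The factor \(e^{L|\delta|}=q^{\lambda|\delta|}\) can be as large as \(q^{\lambda/2}\), so it must be beaten by the ordinate decay, which good characters supply: \(|\gamma|>q^{|\delta|/24}\ge Q\). Since \(T\le Q^{A_0}\), for large \(q\) this forces \(\dist(\gamma,J)\ge|\gamma|/2\) in most cases. However, \(q^{|\delta|/24}\) may be smaller than \(T\) when \(\delta\) is near \(\delta_0\), so some shells still meet the \(T\)-neighbourhood. We therefore split off-line zeros into a local part, \(|\gamma|\le 4T\), and a remote part, \(|\gamma|>4T\).

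In the local part, \(q^{|\delta|/24}<4T\le 4Q^{A_0}\) forces \(|\delta|\ll A_0\log Q/Q\). Hence \(e^{L|\delta|}\le Q^{O(A_0)}\), and the near-line argument above applies verbatim.

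In the remote part we slice \(\delta\) into shells \(\mathfrak S_j\) of width \(1/Q\). With \(D=\lambda Q|\gamma|/2\) and \(|\gamma|\ge q^{|\delta|/24}\), we have
\[
 \exp[-cD^{1/\mathfrak s}]\le\exp[-c(Q\,e^{Q|\delta|/24})^{1/\mathfrak s}],
\]
which is doubly exponential in \(Q|\delta|\). It therefore swamps \(e^{\lambda Q|\delta|}\) once \(Q|\delta|\ge 24\log Q\). The total number of zeros (at most \(O(|\gamma|\log(q|\gamma|))\) per ordinate range) is dominated by splitting \(D^{1/\mathfrak s}\) into two halves: one half absorbs \(e^{L|\delta|}\) and the polynomial zero counts, and the other half gives \(Q^{-B}\). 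The check I expect to be delicate is the inequality
\[
 \lambda Q|\delta|+O(\log(Q|\gamma|))\le\tfrac{c}{2}\left(\lambda Q|\gamma|/2\right)^{1/\mathfrak s}
\]
uniformly for \(|\gamma|\ge\max(4T,q^{|\delta|/24})\) and \(|\delta|\ge\delta_0\). Here the constant \(24\) in \(\delta_0\) and in the goodness condition must be checked against \(c\) and \(\mathfrak s\). If a constant is too tight, I would strengthen the choice via \(|\gamma|^{1/\mathfrak s}\ge q^{|\delta|/(24\mathfrak s)}\), which is exponential in \(Q|\delta|\) and so still dominates the linear term.

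Summing the three pieces gives \eqref{eq:9.2.1}.
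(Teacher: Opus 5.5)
Your proof is correct and follows essentially the paper's route: you split at $\delta_0$ into near-line and far-line zeros and split ordinates into a local region near $4T$ and a remote region. In the local region the margin $L\dist(\gamma,J)\ge\lambda\theta\mathcal H$ together with $\mathcal H^{1/\mathfrak s}\ge(\log Q)^{(1+\eta)/\mathfrak s}$ absorbs the $Q^{O(A_0)}$ losses, and in the remote region you use $\dist(\gamma,J)\gg|\gamma|$. The check you flag as delicate is in fact immediate, which is how the paper argues: goodness gives $e^{L|\delta|}=q^{\lambda|\delta|}<|\gamma|^{24\lambda}$, a fixed power of $|\gamma|$ that $\exp[-c(L|\gamma|)^{1/\mathfrak s}]$ absorbs whatever the constant $24$ is. Finally, for $|\gamma|\ge3T$ you only get $\dist(\gamma,J)\ge|\gamma|/3$ rather than $|\gamma|/2$, which is harmless.
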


\begin{proof}
Put
\[
 \alpha=\frac1{\mathfrak s},\qquad
 \beta=\frac{1+\eta}{\mathfrak s}>1,
 \qquad \mathcal H=TQ.
\]
For every exterior ordinate \(\gamma\notin I=(T,2T]\),
\begin{equation}
\tag{9.2.2}\label{eq:9.2.2}
 \dist(\gamma,J)\ge\theta T,
 \qquad L\dist(\gamma,J)\ge\lambda\theta\mathcal H.
\end{equation}
By summing \eqref{eq:9.2} over unit intervals,
\begin{equation}
\tag{9.2.3}\label{eq:9.2.3}
 \sum_{|\gamma|\le4T+1}m_\rho
 \ll (T+1)\{Q+\log(T+3)\}
 \ll_{A_0}(T+1)Q.
\end{equation}
We first treat these local ordinates. If \(|\delta|<\delta_0\), then
\(e^{L|\delta|}\le Q^{24\lambda}\); hence their normalized contribution
is
\begin{equation}
\tag{9.2.4}\label{eq:9.2.4}
 \ll_{A_0}(T+1)Q^{24\lambda+1}
 \exp[-c\mathcal H^\alpha].
\end{equation}
If \(|\delta|\ge\delta_0\), goodness gives
\[
 |\gamma|>q^{|\delta|/24}\ge Q,
 \qquad e^{L|\delta|}<|\gamma|^{24\lambda}.
\]
Thus the local far-line contribution is
\begin{equation}
\tag{9.2.5}\label{eq:9.2.5}
 \ll_{A_0}(T+1)(4T+1)^{24\lambda}Q
 \exp[-c\mathcal H^\alpha].
\end{equation}
Because \(T\le Q^{A_0}\) and
\[
 \mathcal H^\alpha
 \ge(\log Q)^{(1+\eta)/\mathfrak s}
 = (\log Q)^\beta,
 \qquad \beta>1,
\]
both \eqref{eq:9.2.4} and \eqref{eq:9.2.5} are \(O(Q^{-B})\) for every
fixed \(B>0\).

It remains to treat \(|\gamma|>4T+1\). In that range
\begin{equation}
\tag{9.2.6}\label{eq:9.2.6}
 \dist(\gamma,J)\ge c_\theta|\gamma|.
\end{equation}
For near-line zeros, unit-shell summation gives
\begin{equation}
\tag{9.2.7}\label{eq:9.2.7}
 \ll Q^{24\lambda}
 \sum_{n\ge1}(Q+\log(n+3))
 \exp[-c(Ln)^\alpha]
 =O_B(Q^{-B}).
\end{equation}
For far-line zeros, goodness forces \(|\gamma|>Q\), and the same argument
gives
\begin{equation}
\tag{9.2.8}\label{eq:9.2.8}
 \ll\sum_{n\ge Q-1}(Q+\log(n+3))(n+1)^{24\lambda}
 \exp[-c(Ln)^\alpha]
 =O_B(Q^{-B}).
\end{equation}
The last equalities in \eqref{eq:9.2.7}--\eqref{eq:9.2.8} follow by
comparison with the corresponding integrals: since \(L\asymp Q\), the
first exponential already begins with \(e^{-cQ^\alpha}\), and the second
with \(e^{-cQ^{2\alpha}}\); either dominates every fixed polynomial in
\(Q\) and in the shell index. Negative ordinates are included in the same
estimates. A real exceptional zero either makes \(\chi\) bad or belongs to
the near-line local contribution. Summing the four pieces proves
\eqref{eq:9.2.1}.
\end{proof}

\section{Proof of the main reduction: assembly}

Let
\[
\mathcal N_q=\sum_{\chi\ne\chi_0}\mathcal N_\chi(I).
\]
Corollary~\ref{cor:counting-deletion-interface} gives, uniformly in \(T\),
\begin{equation}
\tag{10.1}\label{eq:10.1}
\mathcal N_q=\frac{qTQ}{2\pi}+O(qT\log(T+2))+O(q)
 =\frac{qTQ}{2\pi}\{1+o_{\eta,A_0}(1)\}.
\end{equation}
The endpoint-safe derivation is given in
Lemma~\ref{lem:half-open-extraction} and \eqref{eq:3.4}: a common right shift
of \(T\) and \(2T\) extracts \(T<|\gamma|\le2T\) exactly, and family
conjugation then identifies the positive count with one half of that symmetric
count.

The prime-side moment calculation and the finite-centre replacement proved
earlier give, with
\[
c_\lambda(v)=\frac{\lambda a(v)^2}{b(v)+\lambda^2\mathcal J(v)},
\qquad v=\psi^2,
\]
the two formulas
\begin{equation}
\tag{10.2}\label{eq:10.2}
\sum_{\chi\ne\chi_0}\tr\widehat G_\chi
 =(1-2\theta)\mathcal N_q+o_{\eta,A_0}(\mathcal N_q),
\end{equation}
\begin{equation}
\tag{10.3}\label{eq:10.3}
\sum_{\chi\ne\chi_0}\|\widehat G_\chi\|_F^2
 =\frac{1-2\theta}{c_\lambda(v)}\mathcal N_q
  +o_{\eta,A_0}(\mathcal N_q).
\end{equation}
Put
\[
M^G_\chi=4\tr\widehat G_\chi-\|\widehat G_\chi\|_F^2,
\quad
C^{(0)}_\chi=M^G_\chi-2\mathcal N_\chi(I),
\quad
C^{(d)}_\chi=\frac12\{M^G_\chi-\mathcal N_\chi(I)\}.
\]
The same quantity \(C^{(0)}_\chi\) serves both the simple and the distinct
critical-line certificates. If
\(d=\#\mathcal K=\Delta L/(2\pi)+O(1)=O(TQ)\), then every eigenvalue
\(x\) of \(\widehat G_\chi\) satisfies \(4x-x^2\le4\), whence
\begin{equation}
\tag{10.4}\label{eq:10.4}
C^{(0)}_\chi\le4d,
\qquad
C^{(d)}_\chi\le2d.
\end{equation}
Lemma~\ref{lem:exceptional} and \eqref{eq:10.1} give
\[
 \frac{d|\mathcal B_q|}{\mathcal N_q}
 \ll\frac{(TQ)(q/Q^2)}{qTQ}\ll Q^{-2}.
\]
Therefore
\begin{equation}
\tag{10.5}\label{eq:10.5}
\begin{aligned}
\sum_{\chi\ \mathrm{good}}C^{(0)}_\chi
 &\ge\sum_{\chi\ne\chi_0}C^{(0)}_\chi-4d|\mathcal B_q|
  =\sum_{\chi\ne\chi_0}C^{(0)}_\chi+o_{\eta,A_0}(\mathcal N_q),\\
\sum_{\chi\ \mathrm{good}}C^{(d)}_\chi
 &\ge\sum_{\chi\ne\chi_0}C^{(d)}_\chi-2d|\mathcal B_q|
  =\sum_{\chi\ne\chi_0}C^{(d)}_\chi+o_{\eta,A_0}(\mathcal N_q).
\end{aligned}
\end{equation}
Only the upper caps \eqref{eq:10.4}, not absolute bounds, are needed.

For good \(\chi\), write
\(\widehat A_\chi=\widehat G_\chi-\widehat E_\chi\).
Proposition~\ref{prop:good-tail} gives
\[
|\tr(\widehat A_\chi-\widehat G_\chi)|
 \le\|\widehat E_\chi\|_1,
\]
and
\begin{equation}
\tag{10.6}\label{eq:10.6}
\left|\|\widehat A_\chi\|_F^2
      -\|\widehat G_\chi\|_F^2\right|
 \le2\|\widehat G_\chi\|_F\|\widehat E_\chi\|_1
  +\|\widehat E_\chi\|_1^2.
\end{equation}
By \eqref{eq:10.3} and family Cauchy--Schwarz,
\[
\sum_\chi\|\widehat G_\chi\|_F
 =O\!\left(q\sqrt{TQ}\right).
\]
Choosing any fixed \(B>1\) in Proposition~\ref{prop:good-tail}, the
trace transfer costs \(O(qQ^{-B})\). The quadratic cross term costs
\(O(qQ^{-B}\sqrt{TQ})\), and the squared tail costs \(O(qQ^{-2B})\).
Relative to \(\mathcal N_q\asymp qTQ\), these are, respectively,
\[
 O\!\left(\frac{Q^{-B}}{TQ}\right),\qquad
 O\!\left(\frac{Q^{-B}}{\sqrt{TQ}}\right),\qquad
 O\!\left(\frac{Q^{-2B}}{TQ}\right),
\]
which are \(o_{\eta,A_0}(1)\) uniformly in \(T\). Thus summing
\eqref{eq:10.6} replaces \(M^G_\chi\) by
\(M^A_\chi=4\tr\widehat A_\chi-\|\widehat A_\chi\|_F^2\) over the good
characters at total cost \(o_{\eta,A_0}(\mathcal N_q)\).

Sum the three inequalities of Proposition~\ref{prop:zero-certificate} over
the good characters and use the preceding estimates.  In the negative
zero-count terms, replace the good-subfamily multiplicity by the larger
quantity \(\mathcal N_q\).  We obtain
\begin{equation}
\tag{10.7}\label{eq:assembly}
\begin{aligned}
\sum_{\chi\ne\chi_0}\mathcal N^s_{0,\chi}(I)
 &\ge\left\{(1-2\theta)\left(4-\frac1{c_\lambda(v)}\right)
 -2+o_{\eta,A_0}(1)\right\}\mathcal N_q,\\
\sum_{\chi\ne\chi_0}\mathcal N^*_{0,\chi}(I)
 &\ge\left\{(1-2\theta)\left(4-\frac1{c_\lambda(v)}\right)
 -2+o_{\eta,A_0}(1)\right\}\mathcal N_q,\\
\sum_{\chi\ne\chi_0}\mathcal N_{d,\chi}(I)
 &\ge\frac12\left\{(1-2\theta)
       \left(4-\frac1{c_\lambda(v)}\right)-1+o_{\eta,A_0}(1)\right\}\mathcal N_q.
\end{aligned}
\end{equation}
Each left side was enlarged from the good subfamily to the whole family,
which is legitimate because these are nonnegative counts.

\begin{proof}[Proof of Proposition~\ref{prop:fixed-auxiliary}]
Divide the three inequalities in \eqref{eq:assembly} by the positive quantity
\(\mathcal N_q\). Equation \eqref{eq:10.1} and every preceding error bound
are uniform for \(T\) satisfying \eqref{eq:2.1}, which gives the three
assertions of Proposition~\ref{prop:fixed-auxiliary}.
\end{proof}

\section{Proof of the main theorem: variational optimization}

We finally optimize the quadratic quotient appearing in
\eqref{eq:assembly} over nonzero nonnegative
\(v\in L^2([-1/2,1/2])\). Scaling \(v\) does not change
\(c_\lambda(v)\), so we may impose \(\int v=1\). Let
\[
(Kv)(s)=\int_{-1/2}^{1/2}|s-t|v(t)\,dt.
\]
The Euler equation for minimizing
\[
 b(v)+\lambda^2\mathcal J(v)
 =\langle v,(\mathrm{Id}+\lambda^2K)v\rangle
\]
under the integral constraint is
\begin{equation}
\tag{11.1}\label{eq:11.1}
v+\lambda^2Kv=C.
\end{equation}
Since \((Kv)''=2v\) in the interior, every even solution satisfies
\[
v''+2\lambda^2v=0.
\]
The even extremizer is therefore, up to a positive scalar,
\begin{equation}
\tag{11.2}\label{eq:11.2}
v_\lambda^*(s)=\cos(\sqrt2\lambda s).
\end{equation}
Substitution into \eqref{eq:11.1}, or direct evaluation of the three
functionals, gives
\begin{equation}
\tag{11.3}\label{eq:11.3}
c_\lambda^*=\sup_v c_\lambda(v)
 =\frac{\sqrt2\tan(\lambda/\sqrt2)}
 {1+(\lambda/\sqrt2)\tan(\lambda/\sqrt2)}.
\end{equation}
We justify that the stationary point is the global minimizer. If \(f\) is a
real zero-mean variation and
\(F(s)=\int_{-1/2}^sf(t)\,dt\), then
\(F(-1/2)=F(1/2)=0\), and two integrations by parts give the
conditional-negative-definiteness identity
\[
\iint|s-t|f(s)f(t)\,ds\,dt
 =-2\int_{-1/2}^{1/2}F(s)^2\,ds.
\]
The required Dirichlet Wirtinger inequality follows directly:
expanding \(F\in H_0^1(-1/2,1/2)\) as
\(F(s)=\sum_{n\ge1}b_n\sin(n\pi(s+1/2))\), Parseval gives
\[
 \int F^2=\frac12\sum_{n\ge1}|b_n|^2,
 \qquad
 \int(F')^2=\frac{\pi^2}{2}\sum_{n\ge1}n^2|b_n|^2,
\]
so \(\int F^2\le\pi^{-2}\int(F')^2\).  Consequently
\[
\begin{aligned}
\int f^2+\lambda^2\iint|s-t|f(s)f(t)\,ds\,dt
 &=\int(F')^2-2\lambda^2\int F^2\\
 &\ge\left(1-\frac{2\lambda^2}{\pi^2}\right)\int f^2>0
\end{aligned}
\]
for \(0<\lambda\le1\). Hence the objective is strictly convex on the
affine hyperplane \(\int v=1\), and the Euler solution is its unique global
minimizer. Moreover \(v_\lambda^*>0\) on \([-1/2,1/2]\) for
\(0<\lambda\le1\).

The function in \eqref{eq:11.2} is not compactly supported in the open
interval. We approximate it inside the fixed Gevrey class used in
Section~2. Put
\[
 g_{\mathfrak s}(x)=
 \begin{cases}
 0,&x\le0,\\
 \exp\!\left(-x^{-1/(\mathfrak s-1)}\right),&x>0,
 \end{cases}
 \qquad
 \Theta_{\mathfrak s}(x)=
 \frac{g_{\mathfrak s}(x)}
 {g_{\mathfrak s}(x)+g_{\mathfrak s}(1-x)}.
\]
A direct Fa\`a di Bruno estimate gives constants \(C,R>0\), depending
only on \(\mathfrak s\), such that
\[
 \|\Theta_{\mathfrak s}^{(m)}\|_\infty
 \le CR^m(m!)^{\mathfrak s}\qquad(m\ge0).
\]
Indeed the same estimate first applies to the flat function
\(g_{\mathfrak s}\); on \([0,1]\) the denominator in the definition of
\(\Theta_{\mathfrak s}\) is bounded away from zero, and the Gevrey class
is closed under multiplication and reciprocals of nonvanishing functions.
Moreover \(\Theta_{\mathfrak s}=0\) on \(( -\infty,0]\) and
\(\Theta_{\mathfrak s}=1\) on \([1,\infty)\). Define the even cutoffs
\[
 \zeta_n(s)=
 \Theta_{\mathfrak s}(n(s+1/2)-1)
 \Theta_{\mathfrak s}(n(1/2-s)-1).
\]
Then \(\zeta_n\in G_c^{\mathfrak s}((-1/2,1/2))\),
\(0\le\zeta_n\le1\), it is supported in
\([-1/2+1/n,1/2-1/n]\), and \(\zeta_n=1\) on
\([-1/2+2/n,1/2-2/n]\) for \(n\ge5\). Set
\[
\psi_n=(v_\lambda^*)^{1/2}\zeta_n,
\qquad
v_n=\psi_n^2=v_\lambda^*\zeta_n^2.
\]
Since \((v_\lambda^*)^{1/2}\) is analytic and positive on a neighbourhood
of the closed interval, each \(\psi_n\) belongs to
\(G_c^{\mathfrak s}((-1/2,1/2))\). Moreover
\(v_n\to v_\lambda^*\) in \(L^1\cap L^2\), so
\(a(v_n)\to a(v_\lambda^*)\) and \(b(v_n)\to b(v_\lambda^*)\). Also,
since \(|s-t|\le1\),
\[
|\mathcal J(f)-\mathcal J(g)|
 \le\norm{f-g}_1(\norm f_1+\norm g_1),
\]
and hence \(\mathcal J(v_n)\to\mathcal J(v_\lambda^*)\). Thus admissible
windows approach the supremum \eqref{eq:11.3}.

\begin{proposition}[Fixed-bandwidth conclusion]\label{prop:fixed-bandwidth}
For every fixed \(\eta>0\), \(A_0>0\), and every fixed
\(0<\lambda<1\), uniformly
for \(T\) satisfying \eqref{eq:2.1},
\begin{align*}
\frac{\mathcal N^s_{0,q}}{\mathcal N_q},\quad
\frac{\mathcal N^*_{0,q}}{\mathcal N_q}
&\ge2-\frac{1+(\lambda/\sqrt2)\tan(\lambda/\sqrt2)}
 {\sqrt2\tan(\lambda/\sqrt2)}-o_{\eta,A_0}(1),\\
\frac{\mathcal N_{d,q}}{\mathcal N_q}
&\ge\frac32-\frac{1+(\lambda/\sqrt2)\tan(\lambda/\sqrt2)}
 {2\sqrt2\tan(\lambda/\sqrt2)}-o_{\eta,A_0}(1).
\end{align*}
\end{proposition}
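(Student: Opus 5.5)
The plan is to deduce the statement from Proposition~\ref{prop:fixed-auxiliary}, applied along the Gevrey windows \(\psi_n\) built above and combined with the variational evaluation \eqref{eq:11.3}. Both limits, \(\theta\downarrow0\) and \(n\to\infty\), are taken only after \(q\to\infty\).

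Fix \(\eta,A_0,\lambda\). For each \(n\ge5\) and each \(0<\theta<1/4\), the triple \((\lambda,\theta,\psi_n)\) is admissible in \eqref{eq:2.6}: \(\psi_n\) is real and even, it is nonzero, and it lies in \(G_c^{\mathfrak s}((-1/2,1/2))\). Proposition~\ref{prop:fixed-auxiliary} then gives, uniformly in \(T\) satisfying \eqref{eq:2.1},
\[
\frac{\mathcal N^s_{0,q}}{\mathcal N_q},\ \frac{\mathcal N^*_{0,q}}{\mathcal N_q}
\ge(1-2\theta)\Bigl(4-\frac1{c_\lambda(v_n)}\Bigr)-2+o_{\eta,A_0,\theta,n}(1).
\]
The analogous bound holds for \(\mathcal N_{d,q}/\mathcal N_q\), with right side \(\frac12\{(1-2\theta)(4-1/c_\lambda(v_n))-1\}\).

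Now let \(\varepsilon>0\) be given. Since \(a(v_n),b(v_n),\mathcal J(v_n)\) converge to the corresponding values for \(v_\lambda^*\), and \(a(v_\lambda^*)>0\), we have \(c_\lambda(v_n)\to c_\lambda^*\). We can therefore choose \(n\) so that \(1/c_\lambda(v_n)\le 1/c_\lambda^*+\varepsilon\). We then choose \(\theta\) small enough that \(2\theta\,|4-1/c_\lambda(v_n)|\le\varepsilon\). With these two choices fixed, the error terms \(o(1)\) are below \(\varepsilon\) for all \(q\ge q_0(\varepsilon)\), uniformly in \(T\).

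This yields a lower bound of \(2-1/c_\lambda^*-3\varepsilon\) for the critical-line ratios. For \(\mathcal N_{d,q}\) it yields \(\frac12(3-1/c_\lambda^*)-3\varepsilon\), which equals \(\frac32-\frac1{2c_\lambda^*}-3\varepsilon\). Since \(\varepsilon\) is arbitrary, the lower bound holds with an \(o_{\eta,A_0}(1)\) loss; this is the usual diagonal extraction, in which \(\varepsilon(q)\to0\) slowly enough that \(q\ge q_0(\varepsilon(q))\).

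It remains to substitute \(1/c_\lambda^*=\bigl(1+(\lambda/\sqrt2)\tan(\lambda/\sqrt2)\bigr)/(\sqrt2\tan(\lambda/\sqrt2))\) from \eqref{eq:11.3}. This gives exactly the two displayed expressions.

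The one point that needs genuine verification is \eqref{eq:11.3} itself, which was only asserted. To check it, substitute \(v=\cos(\sqrt2\lambda s)\) into \eqref{eq:11.1}. Then \(Kv=-v/\lambda^2+\text{const}\) in the interior, and fixing the constant at \(s=\pm1/2\) determines \(C\). The Euler equation then gives \(b+\lambda^2\mathcal J=\langle v,C\rangle=Ca\), so \(c_\lambda^*=\lambda a/C\). With \(\kappa=\lambda/\sqrt2\), one has \(a=\tan\kappa\cdot\cos\kappa\cdot\sqrt2/\lambda\), that is, \(a=2\sin\kappa/(\sqrt2\lambda)\). For \(C\) I would compute \(Kv(1/2)\) directly via an integral of \((1/2-t)\cos(\sqrt2\lambda t)\). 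Simplifying should produce \(c_\lambda^*=\sqrt2\tan\kappa/(1+\kappa\tan\kappa)\). This is a routine trigonometric check, and it is the only step that could hide an arithmetic slip.
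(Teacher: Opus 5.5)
Your proposal is correct and is essentially the paper's proof. You fix $\lambda$, pick $\theta$ and a window $\psi_n$ (the order is immaterial, as both are fixed before $q\to\infty$) so that the constant in Proposition~\ref{prop:fixed-auxiliary} is within $\varepsilon$ of $2-1/c_\lambda^*$, let $q\to\infty$ uniformly in $T$, and then let $\varepsilon\downarrow0$. Your check of \eqref{eq:11.3} also works out: with $\kappa=\lambda/\sqrt2$ one gets $Kv_\lambda^*(1/2)=a/2$, so $C=\cos\kappa+\kappa\sin\kappa$ and $c_\lambda^*=\lambda a/C=\sqrt2\tan\kappa/(1+\kappa\tan\kappa)$.
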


\begin{proof}
Fix \(\eta,A_0\) and \(\lambda\), and let \(\varepsilon>0\). First choose
one \(\theta>0\), and then one Gevrey window from the fixed approximating
sequence above, so that the resulting constant is within \(\varepsilon\)
of the supremum in \eqref{eq:11.3}. These choices are independent of
\(q\) and \(T\). Proposition~\ref{prop:fixed-auxiliary} then applies with
an error uniform over the full height range. Letting \(q\to\infty\) and
finally \(\varepsilon\downarrow0\) gives all three displayed bounds. At no
stage does an auxiliary parameter depend on \(q\) or on \(T\).
\end{proof}

The three left sides of Proposition~\ref{prop:fixed-bandwidth} are
independent of \(\lambda\). To pass to the endpoint without introducing a
\(q\)-dependent bandwidth, fix \(\varepsilon>0\), choose one
\(\lambda<1\) for which the corresponding constant is within
\(\varepsilon\) of its supremum, and then let \(q\to\infty\) in the uniform
fixed-\(\lambda\) estimate. Since \(\varepsilon\) is arbitrary, this is
equivalent to letting \(\lambda\uparrow1\) only after the fixed-bandwidth
theorem has been proved. For the primary simple-zero statistic this gives,
uniformly in \(T\),
\begin{equation}
\tag{11.4}\label{eq:11.4}
\begin{aligned}
\frac{\mathcal N^s_{0,q}}{\mathcal N_q}
&\ge2-\frac1{c_1^*}-o_{\eta,A_0}(1)\\
&=\frac32-\frac1{\sqrt2}\cot\!\left(\frac1{\sqrt2}\right)-o_{\eta,A_0}(1)\\
&=C_{\mathrm{MT}}-o_{\eta,A_0}(1).
\end{aligned}
\end{equation}
The two distinct-zero bounds, again uniformly in \(T\), are
\begin{equation}
\tag{11.5}\label{eq:11.5}
\begin{aligned}
\frac{\mathcal N^*_{0,q}}{\mathcal N_q}
&\ge C_{\mathrm{MT}}-o_{\eta,A_0}(1),\\
\frac{\mathcal N_{d,q}}{\mathcal N_q}
&\ge\frac{3-1/c_1^*}{2}-o_{\eta,A_0}(1)
 =\frac{1+C_{\mathrm{MT}}}{2}-o_{\eta,A_0}(1)\\
&=\frac54-\frac1{2\sqrt2}\cot\!\left(\frac1{\sqrt2}\right)-o_{\eta,A_0}(1)
 =0.836250351839\ldots-o_{\eta,A_0}(1).
\end{aligned}
\end{equation}
The constant in \eqref{eq:11.4} is the classical Montgomery--Taylor constant
\cite{Montgomery1975,ConreyGhoshGonek1998}. In the present setting it arises
after the uniform good--bad decomposition, the finite Gabor moment
calculation, and the inertia certificate have all been completed.
Equation \eqref{eq:11.5} uses no new analytic estimate: it is the exact
finite-dimensional consequence of counting a multiple critical-line block
once and an off-line orbit twice. Equations \eqref{eq:11.4}--\eqref{eq:11.5},
together with \eqref{eq:3.4}, prove Theorem~\ref{thm:main}.

\section{Uniformity and endpoint limitations}

The table records the error scales used in the final assembly. Throughout,
\(\eta>0\), \(A_0>0\), \(0<\lambda<1\), \(\theta\), and the Gevrey
window are fixed before \(q\to\infty\), while \(T\) varies over
\eqref{eq:2.1}. The denominator satisfies
\(\mathcal N_q\asymp qTQ=q\mathcal H\), uniformly. The tail exponent
\(B>1\) is fixed only after the window. All constants are independent of
\(q\) and \(T\).

\begin{center}
\small
\renewcommand{\arraystretch}{1.15}
\begin{tabularx}{\textwidth}{@{}>{\raggedright\arraybackslash}p{0.32\textwidth}Y@{}}
\toprule
Source & Uniform relative bound, unless marked per character\\
\midrule
Denominator: gamma and argument remainders
 & \(O(\log(T+2)/Q)+O(\mathcal H^{-1})\)\\
First moment: archimedean remainder
 & \(O_{A_0}(\log Q/Q)+O(q^{-1})\)\\
First moment: finite centres
 & \(O(\mathcal H^{-1})\)\\
First moment: principal subtraction
 & \(O(q^{\lambda/2-1}Q^{-1})\)\\
Second moment: finite centres and endpoint overlap
 & \(O(\mathcal H^{-1})\)\\
Second moment: gamma-factor remainder
 & \(O_{A_0}(\log Q/Q)\)\\
Second moment: ratio PNT and reciprocal congruences
 & \(O(\mathcal H^{-1})\)\\
Second moment: principal subtraction
 & \(O(q^{\lambda-1}Q^{-2})+O(q^{\lambda/2-1}Q^{-1})\)\\
Exceptional-character deletion
 & \(O(Q^{-2})\)\\
Local Gevrey exterior tail
 & per character:
 \(Q^{C_{\lambda,A_0}}\exp[-c\mathcal H^{1/\mathfrak s}]\)\\
Remote Gevrey exterior tail
 & per character: \(O_B(Q^{-B})\)\\
Trace-norm transfer: trace, cross, and square terms
 & \(O(Q^{-B}/\mathcal H)\),
   \(O(Q^{-B}/\sqrt{\mathcal H})\), and
   \(O(Q^{-2B}/\mathcal H)\)\\
\bottomrule
\end{tabularx}
\end{center}

Because
\[
 \mathcal H\ge(\log Q)^{1+\eta},\qquad
 \mathfrak s=1+\frac\eta2,
\]
one has
\[
 \mathcal H^{1/\mathfrak s}
 \ge(\log Q)^{(1+\eta)/(1+\eta/2)},
\]
whose exponent is strictly larger than one. Hence the local Gevrey tail is
smaller than every fixed power of \(Q^{-1}\), even after multiplication by
the polynomial factor allowed by \(T\le Q^{A_0}\). Also
\(\mathcal H^{-1}\le(\log Q)^{-1-\eta}\) and
\(\log(T+2)\ll_{A_0}\log Q\). Every entry in the table therefore tends to
zero uniformly over the height range. No displayed little-oh estimate is
uniform as \(\eta\downarrow0\), \(A_0\to\infty\),
\(\lambda\uparrow1\), or \(\theta\downarrow0\); these limits are taken
only in the order specified in Section~2.

\subsection*{The lower endpoint \texorpdfstring{\(TQ\asymp1\)}{TQ comparable to one}}
The Gevrey estimate removes the fixed-power requirement
\(TQ\ge Q^\eta\) that would result from a direct polynomial-decay
argument, but it does not reach a window containing only
\(O(1)\) local mean spacings. At \(TQ\asymp1\), the main family scale is
only \(q\), so the \(O(q)\) averaged-argument and finite-centre errors are
no longer little-oh terms. Also \(d\asymp TQ\asymp1\), and the
Fourier--Laplace suppression in Proposition~\ref{prop:good-tail} is only a
fixed constant. Thus the denominator asymptotic, finite-dimensional
approximation to the variational profile, and trace-norm localization all
lose their vanishing margins. Reaching that scale would require a genuinely
finite-normalized-window certificate and a sharper family counting input;
choosing any fixed Gevrey order $\mathfrak s>1$ or increasing the number of
integrations by parts does not repair it.

\subsection*{Uniformity through \texorpdfstring{\(T\asymp Q\)}{T comparable to Q}}
The earlier shortcut available under \(T=o(Q)\) replaces the distance from a
far-line zero to \(J\) by a fixed multiple of its ordinate. The proof of
Proposition~\ref{prop:good-tail} instead splits the exterior zeros into the
local region \(|\gamma|\le4T+1\) and the remote region
\(|\gamma|>4T+1\). In the local region the fixed margin gives
\(\dist(\gamma,J)\ge\theta T\), and the Gevrey factor suppresses the
polynomial cost in \(T\). In the remote region one recovers
\(\dist(\gamma,J)\gg|\gamma|\). This decomposition is uniform for every fixed upper exponent \(A_0\), so
the same tail estimate remains valid when \(T\asymp\log q\) and throughout
the stated polylogarithmic range.

\section{Conclusion}
Fix \(\eta>0\) and \(A_0>0\). Uniformly throughout
\[
 \frac{(\log\log q)^{1+\eta}}{\log q}
 \le T\le(\log q)^{A_0},
 \qquad I=(T,2T],
\]
the family zero count satisfies
\[
 \mathcal N_q=\frac{qT\log q}{2\pi}\{1+o_{\eta,A_0}(1)\}.
\]
Relative to this denominator, the lower proportions of simple zeros and
distinct zeros on the critical line are at least
\(C_{\mathrm{MT}}=0.672500703679\ldots\), while the lower proportion of all
distinct zeros in \(I\), without restriction to the critical line, is at
least \(C_d=(1+C_{\mathrm{MT}})/2=0.836250351839\ldots\), in each case up
to a uniform \(o_{\eta,A_0}(1)\) term. The theorem covers every fixed-power
scale \(T=(\log q)^\xi\) with \(\xi>-1\), as well as scales approaching
\((\log q)^{-1}\) within a factor \((\log\log q)^{1+\eta}\). The proof
combines the endpoint-safe family count and exceptional-character deletion
with finite Gabor moments, Gevrey trace-norm localization, and the zero-side
inertia/rank--trace certificate. The result remains a family-level theorem
for one prime modulus; no individual-character conclusion is asserted.

\section*{Acknowledgements}
The authors thank Fredrik Pr\"uzelius for drawing their attention to the
Alp\"oge--Furman preprint and for communicating his related manuscript. His
questions about the height range prompted the authors to re-examine the
uniformity of their argument.

\section*{Declarations}

\textit{Funding.} The authors received no specific funding for this work.

\textit{Data availability.} Data sharing is not applicable to this article.

\end{document}